\documentclass[11pt]{article}
\usepackage[left=1in,right=1in,top=1in,bottom=1in]{geometry}
     
    \setlength{\parskip}{1em}
%%%%%%%%%%%%%%%%%%%%%%%%%%%%%%%%%%%%%%%%%%%%%
%% ##### GRAPHICS PACKAGE OPTIONS (and color) #####
%%%%%%%%%%%%%%%%%%%%%%%%%%%%%%%%%%%%%%%%%%%%%
\usepackage{enumitem}
\usepackage[version=4]{mhchem}
\usepackage{chemfig}    \newcommand{\cf}[1]{{\sf{#1}}}
\usepackage{tikz, graphicx}
\usepackage{multicol}
\usepackage{multirow}
\usepackage[margin=1cm]{caption}
\usepackage{subcaption}
    \captionsetup[subfigure]{subrefformat=simple,labelformat=simple}
    
	\usetikzlibrary{positioning}
	\usetikzlibrary{arrows}
	    %%% for reactions 
        \tikzset{%
        fwdrxn/.style={very thick, arrows={-Stealth[length=5pt,width=5pt]}},
        revrxn/.style={very thick, arrows={-Stealth[length=5pt,width=5pt,left]}},
        revrxnN/.style={transform canvas={yshift=1.25pt}},
        revrxnS/.style={transform canvas={yshift=-1.25pt}},
        revrxnE/.style={transform canvas={xshift=1.25pt}},
        revrxnW/.style={transform canvas={xshift=-1.25pt}},
        revrxnNE/.style={transform canvas={xshift=1pt,yshift=1pt}},
        revrxnSE/.style={transform canvas={xshift=1pt,yshift=-1pt}}, 
        revrxnSW/.style={transform canvas={xshift=-1pt,yshift=-1pt}},
        revrxnNW/.style={transform canvas={xshift=-1pt,yshift=1pt}},
        }
        % \draw [fwdrxn] (y) -- (x);
        % \draw [revrxn, revrxnNW] (y) -- (0);
        
        \newcommand{\ratecnst}[1]{\scriptsize\color{ratecnst}{#1}}
        \newcommand{\delayparam}[1]{{\scriptsize\color{redorange}{#1}}}
        %%% 
        \tikzset{%
        Snode/.style={circle, draw=black, thick,  fill=viridisgreen!30!white, fill opacity = 1, inner sep=0pt,minimum size=0.8cm, outer sep=0pt},
        Rnodelong/.style={rectangle, rounded corners, draw=black, thick, fill=viridisyellowpale, fill opacity = 1, minimum width=2.75cm, minimum height=0.7cm, outer sep=0pt},
        Rnode/.style={rectangle, rounded corners, draw=black, thick, fill=viridisyellowpale, fill opacity = 1, minimum width=1cm, minimum height=0.7cm, outer sep=0pt},
        SR/.style={very thick},
        DSR/.style={very thick, style={very thick, arrows={-Stealth[length=7.5pt,width=7.5pt]}}},
        srNW/.style={xshift=1.3pt,yshift=-1.3pt},
        srSW/.style={xshift=1.3pt,yshift=1.3pt},
        srNE/.style={xshift=-1.3pt,yshift=-1.3pt},
        srSE/.style={xshift=-1.3pt,yshift=1.3pt},
        cycopen/.style={line width=0.95cm, opacity=0.5, line cap=round}, %line width=0.75cm
        cycopenhalf/.style={line width=0.45cm,  opacity=0.5, line cap=round}, %line width=0.75cm
        }
        % \node[Snode] (x) at (0,1) {$\sf{X}$};
        % \node[Rnode] (r1) at (-2.25,1) {$\sf{X} \to \sf{Y}$};
        % \draw [SR] (x.south east)--([srNW] r1.north west) node [midway, above] {\footnotesize $1$}; FOR DIRECTED EDGE
        % \draw [SR] (z.north) to [bend right] node[midway, above] {label} (r2.east) ;
        %%%% HIGHLIGHT CYCLE
        % \draw[help lines] (-3,-2) grid (2,2); 
            % \draw [cycopen] (START) .. controls (VECTOR PULL) .. (TARGET) 
            %-- (NEXT)     
            %.. controls (PULL) .. (TARGET) ;

        \tikzset{ %theorems implications
            % \draw[double arrow=7.5pt colored by black and white] (thma) -- (thmb);
            double arrow/.style args={#1 colored by #2 and #3}{
            -stealth,line width=#1,#2, % first arrow
            postaction={draw,-stealth,#3,line width=3*(#1)/4,
            shorten <=(#1)/9,shorten >=1*(#1)/4}, % second arrow
            }
            % double arrow/.style args={#1 colored by #2 and #3}{
            % -stealth,line width=#1,#2, % first arrow
            % postaction={draw,-stealth,#3,line width=(#1)/2,
            % shorten <=(#1)/5,shorten >=(#1)/2}, % second arrow
            % }
        }

    \newcommand{\tikzinline}[2]
	    {\begin{center}\begin{tikzpicture}[scale={#1}]#2
	    \end{tikzpicture}\end{center}
	    \vspace{0cm}}
% 	\newcommand{\tikzinline}[2]
% 	    {\begin{center}\tikzexternaldisable\begin{tikzpicture}[scale={#1}]#2
% 	    \end{tikzpicture}\tikzexternalenable\end{center}
% 	    \vspace{0cm}}

\usepackage{color, xcolor}

	\definecolor{orange}{RGB}{250, 140, 0}
		\newcommand\orange[1]{{\textcolor{orange}{#1}}}
	\definecolor{redorange}{RGB}{250, 87, 0}
	\definecolor{turq}{RGB}{0,153,115}%<< turqcv/web. {0, 160, 160}
		\newcommand\turq[1]{{\textcolor{turq}{#1}}}
	\definecolor{dkturq}{RGB}{56, 128, 115}  
	    
	\definecolor{violet}{RGB}{164, 98, 234}
		
    \definecolor{graycv}{RGB}{158,158,158}
        
    \definecolor{dkgraycv}{RGB}{130, 130, 130}
    	
    \definecolor{ratecnst}{RGB}{102, 102, 102}

    %% viridis color
    \definecolor{viridisyellow}{RGB}{253,231,36}%{239,223,81}
    \definecolor{viridisyellowpale}{RGB}{239,223,81}
    \definecolor{viridisgreen}{RGB}{121,209,81}%{116,191,108}
        \definecolor{hlgreen}{RGB}{16,115,16}
    \definecolor{viridisturq}{RGB}{34,167,132}%{68,138,136}
    \definecolor{viridisblue}{RGB}{64,67,135}%{52,54,131}
    \definecolor{viridisviolet}{RGB}{68,1,84}%{60,13,80}
    %% magma color
    \definecolor{magmayellow}{RGB}{245,218,81}
    \definecolor{magmaorange}{RGB}{239,164,80}
    \definecolor{magmapink}{RGB}{188,81,119}
    \definecolor{magmapurple}{RGB}{102,27,162}
    \definecolor{magmablue}{RGB}{4,18,129}
    %% bright highlight color
    \definecolor{highlightflamingo}{RGB}{252, 116, 253}
    \definecolor{highlightbabypink}{RGB}{252, 128, 165}
    \definecolor{highlightpastelpink}{RGB}{253,191,210}
    \definecolor{highlightorange}{RGB}{255, 122, 0}
    \definecolor{highlightblue}{RGB}{91, 101, 240}
    \definecolor{highlightpastelgreen}{RGB}{157, 224, 147}
    \definecolor{highlightemerald}{RGB}{31, 219, 160}
    \definecolor{highlightbabyblue}{RGB}{118, 215, 234}
    \definecolor{highlightlavender}{RGB}{230, 202, 252}%{201, 160, 220}

% \usepackage{parskip}
%     \makeatletter
%     \def\thm@space@setup{%
%         \thm@preskip=\parskip \thm@postskip=0pt
%     }\makeatother
%     \setlength\parindent{3em}
%     \setlength\parskip{1.2em}
% \usepackage{setspace}
%     \renewcommand{\baselinestretch}{1.5} 

\usepackage[normalem]{ulem}
%%%%%%%%%%%%%%%%%%%%%%%%%%%%%%%%%%%%%%%%%%%%
%% ##### MATH SHORTCUTS #####
%%%%%%%%%%%%%%%%%%%%%%%%%%%%%%%%%%%%%%%%%%%%
\usepackage{amsfonts, amssymb, mathtools}
\usepackage[nocompress, noadjust, sort]{cite}
	\allowdisplaybreaks	
\newcommand{\eq}[1]{\begin{align*}#1\end{align*}}
	\newcommand{\eqn}[1]{\begin{align}#1\end{align}}  
	
\newcommand{\vv}[1]{{\boldsymbol{#1}}}  % vector 
\newcommand{\mm}[1]{\mathbf{#1}}        % matrix

\newcommand\mf[1]{\mathfrak{#1}}  
\newcommand\mc[1]{\mathcal{#1}}

\newcommand{\rr}{\ensuremath{\mathbb{R}}}

\renewcommand{\epsilon}{\varepsilon}	
      
\renewcommand{\phi}{\varphi}		
	% norm
	% inner product
				% gradient
				% partial derivative

     % real part
     % imag part/Image
	
\newcommand{\Id}{\mm{Id}} 		    % identity matrix

\DeclareMathOperator{\supp}{supp}

\newcommand{\st}{\colon}
\DeclareMathOperator{\Span}{span}

% CRNT
\newcommand{\kk}{\kappa}
     %for subscript in sum (DELAY)
\newcommand{\rrp}{\rr_{\geq}}
\newcommand{\rrpp}{\rr_{>}}

\newcommand{\RR}{\ensuremath{\rightleftharpoons}}
\newcommand{\FR}{\ensuremath{\rightarrow}}

\newcommand{\Gk}{\ensuremath{\mc N_{\vv \kk}}}
    \newcommand{\Gkde}{\ensuremath{\mc N_{\vv \tau,\vv \kk}}}
    \newcommand{\Gktilde}{\ensuremath{\tilde{\mc N}_{\tilde{\vv \kk}} }}
    \newcommand{\Gtilde}{\ensuremath{\tilde{\mc N}}}
\newcommand{\xx}{\vv x}
\newcommand{\yy}{\vv y}

\newcommand{\Jac}{\mm{J}}
    \newcommand{\mtxvsp}{\vphantom{\sum^{\frac{\sum^1}{2}}_\sum}} %%when pmatrix has 1 row (of column vectors) -- as in Jacobian matrix

\newcommand{\ee}{\hat{\mm{e}}}

%%%%%%%%%%%%%%%%%%%%%%%%%%%%%%%%%%%%%%%%%%%%%%%%%%%%%%%%%%%%%%%%%%%%%%
%% ##### THEOREM STYLES #####
%%%%%%%%%%%%%%%%%%%%%%%%%%%%%%%%%%%%%%%%%%%%%%%%%%%%%%%%%%%%%%%%%%%%%%
\usepackage{amsthm}
\usepackage{hyperref}
\usepackage{amsmath}
\usepackage{mathrsfs}
\usepackage[nameinlink]{cleveref}
    \crefname{ex}{Example}{Examples}
    \crefname{thm}{Theorem}{Theorems} 
    \crefname{lem}{Lemma}{Lemmas}
    \crefname{prop}{Proposition}{Propositions}
    \crefname{cor}{Corollary}{Corollaries} 
    \crefname{conj}{Conjecture}{Conjectures} 
    \crefname{defn}{Definition}{Definitions}
    \crefname{rmk}{Remark}{Remarks}

	\newtheorem{thm}{Theorem}[section]
	\newtheorem{lem}[thm]{Lemma}
	\newtheorem{prop}[thm]{Proposition}
	\newtheorem{cor}[thm]{Corollary}
	
	\theoremstyle{definition} %no italics
		\newtheorem{defn}[thm]{Definition}
		\newtheorem{ex}[thm]{Example}
    	\newtheorem{rmk}[thm]{Remark}	

%% refer to later theorem: 
	% \begin{thmref}[\ref{thm:main}] .... \end{thmref}
	\newtheoremstyle{TheoremNum}
        {\topsep}{\topsep}      %%% space between body and thm
        {\itshape}              %%% Thm body font
        {}                      %%% Indent amount (empty = no indent)
        {\bfseries}             %%% Thm head font
        {.}                     %%% Punctuation after thm head
        { }                     %%% Space after thm head
        {\thmname{#1}\thmnote{ \bfseries #3}}%%% Thm head spec
    \theoremstyle{TheoremNum}
	
\newcommand{\df}[1]{{\bf\emph{#1}}}		% highlight definitions

%%%%%%%%%%%%%%%%%%%%%%%%%%%%%%%%%%%%%%%%%%%%%%%%%%%%%%%%%%%%%%%%%%%%%%
%% ##### TIKZ #####
%%%%%%%%%%%%%%%%%%%%%%%%%%%%%%%%%%%%%%%%%%%%%%%%%%%%%%%%%%%%%%%%%%%%%%
\usetikzlibrary{shapes,automata,positioning,arrows,fit} 
\usetikzlibrary{decorations.pathmorphing}

%%%%%%%%%%%%%%%%%%%%%%%%%%%%%%%%%%%%%%%%%%%%%%%%%%%%%%%%%%%%%%%%%%%%%%
%% ##### FOR DRAFTING PURPOSES #####
%%%%%%%%%%%%%%%%%%%%%%%%%%%%%%%%%%%%%%%%%%%%%%%%%%%%%%%%%%%%%%%%%%%%%%
% \include{LineNoPackageOptions} %% line number for drafting purposes
\usepackage{marginnote}

\newcommand{\emphc}[1]{\turq{#1}}
\newcommand{\SR}{DSR graph}
\newcommand{\DSR}{\mf{D}}%%%%%%

\definecolor{dkorange}{RGB}{247, 72, 2}

%%%%%%%%%%%%%%%%%%%%%%%%%%%%%%%%%%%%%%%%%%%%%%%%%%%%%%%%%%%%%%%%%%%%%%
%%%%%%%%%%%%%%%%%%%%%%%%%%%%%%%%%%%%%%%%%%%%%%%%%%%%%%%%%%%%%%%%%%%%%%
%% ##### TITLE/AUTHOR #####
%%%%%%%%%%%%%%%%%%%%%%%%%%%%%%%%%%%%%%%%%%%%%%%%%%%%%%%%%%%%%%%%%%%%%%
\usepackage{authblk}
\title{
    A graph-theoretic condition for delay stability of reaction systems
}
\author[1]{
        Polly Y. Yu%
}
\author[3]{
        Maya Mincheva%
}
\author[4]{
        Casian Pantea%
}
\author[1,2]{
         Gheorghe Craciun%
}
\affil[1]{\small Department of Mathematics, University of Wisconsin-Madison}
\affil[2]{\small Department of Biomolecular Chemistry, University of Wisconsin-Madison}
\affil[3]{\small Department of Mathematical Sciences, Northern Illinois University}
\affil[4]{\small Department of Mathematics, West Virginia University}
\date{} 

\begin{document}
\maketitle

\begin{abstract}
    Delay mass-action systems provide a model of chemical kinetics when past states influence the current dynamics. In this work, we provide a graph-theoretic condition for \emph{delay stability}, i.e., linear stability independent of both rate constants and delay parameters. In particular, the result applies when the system has no delay, implying asymptotic stability for the ODE system. The graph-theoretic condition is about cycles in the \emph{directed species-reaction graph} of the network, which encodes how different species in the system interact. 
\end{abstract}

\tableofcontents

%%%%%%%%%%%%%%%%%%%%%%%%%%%%%%%%%%%%%%%%%%%%%%%%%%%%%%%%%%%%%%%%%%%%%%%%
\section{Introduction}
\label{sec:intro}

Mass-action kinetics is a common modelling assumption for chemical and biochemical processes. If the environment is well-mixed, the rate at which any reaction proceeds is assumed to be proportional to the concentrations of reactant species, and the ratio of reactants lost to products gained is determined by the stoichiometric coefficients. 

As with most polynomial ODEs, mass-action systems allow for complicated dynamics, which may change considerably when parameters vary. It is then noteworthy that a number of dynamical properties of a reaction system have been shown to be a function of the network structure alone, and independent of parameter values. A number of recent results relate structural features of the reaction network with the existence, uniqueness, and stability of steady states, existence of oscillations, persistence (non-extinction) of solutions, and absolute concentration robustness; for example, see~\cite{Boros2019, Feinberg1987, ConradiFeliuMinchevaWiuf2017, MinchevaRoussel2007, CraciunNazarovPantea2013_GAC, GopalkrishnanMillerShiu2014_GAC, ShinarFeinberg2010}.

The structure of a reaction network may be encoded by way of its \emph{DSR graph}, a labeled digraph related to Petri nets. First introduced to address questions of multistationary \cite{CraciunBanaji2009, CraciunFeinberg2006}, features of the DSR graph or closely related graphs have been tied to other dynamical properies, like persistence, stability, and absence of oscillation \cite{Angeli.2010aa, Angeli.2013aa, craciun2011graph}.

Under mass-action kinetics, product species are available instantaneously; however, many processes naturally involve a time delay between reactant consumption and product production, for example transmission of cellular signal~\cite{Macdonald1989}. By taking into account the influence of the past, the governing equation is a system of \emph{delay} differential equations, instead of a system of ordinary differential equations. In this work we show that the DSR graph can allow conclusions about mass-action systems \emph{with delays}.

More precisely, we show that if the DSR graph of a reaction network $\mc N$ satisfies certain conditions, then $\mc N$ is \emph{delay stable}, i.e., any positive steady states are linearly stable for all choices of parameters, incuding delay parameters. This is our  \Cref{thm:delay-SR}, which can be regarded as the main result of the paper. Moreover, this theorem also implies asymptotic stability of any positive steady states for the mass-action system \emph{without delay}, i.e., for the system of ODEs.

Here is an outline of the proof of \Cref{thm:delay-SR}: first we construct a certain network $\tilde{\mc N}$ (\emph{the modified network}) as explained in \Cref{sec:delay-modified}. Then, as an intermediary step, we relate delay stability of $\mc N$ with the DSR graph of $\tilde{\mc N}$ in  \Cref{sec:delay-SRintro}. Finally, the relation between the DSR graphs of $\mc N$ and $\tilde{\mc N}$ is discussed in \Cref{sec:delay-SR}, and we arrive at a sufficient condition for delay stability involving the DSR graph of the initial network $\mc N$. A more detailed summary of the proof of \Cref{thm:delay-SR} is given as a diagram of implications in \Cref{fig:implications}, which  can also be viewed as a roadmap of this paper. Additional details are provided in \Cref{sec:implications}. 
Background on delay mass-action systems and the DSR graph is given in \Cref{sec:def,sec:delay-SRintro}. While examples are discussed throughout the text, \Cref{ex:DNA-SR,ex:CST} have biological motivation.

Although this entire work is discussed in terms of mass-action kinetics, the results hold for more general kinetics under mild conditions. Mass-action kinetics assumes that the reaction rate function $\mathscr{K}_{\yy\to\yy'}$ is proportional to the concentrations of reactants (with exponents given by stoichiometric coefficients). The main result in this paper holds when the rate function $\mathscr{K}_{\yy\to\yy'}$ satisfies $\frac{\partial \mathscr{K}_{\yy\to\yy'}}{\partial x_j} \geq 0$ for all $j$, and $\frac{\partial \mathscr{K}_{\yy\to\yy'}}{\partial x_i} > 0$ for any $i \in \supp(\yy)$. This generalization is permitted because of \cite[Remark 4.3]{CraciunMinchevaPanteaYu2020} and \cite{CraciunBanaji2009}.

% %%%%%%%%%%%%%%%%%%%%%%%%%%%%%%%%%%%%%%%%%%%%%
\section{Mass-action systems with delay}
\label{sec:def}

In this section, we introduce mass-action systems with delay and some related notions, previously defined in~\cite{CraciunMinchevaPanteaYu2020}. For more details about delay mass-action systems, see~\cite{Roussel1996, CraciunMinchevaPanteaYu2020}. Let $\rrp^n$ denote the set of vectors with non-negative coordinates, and let $\rrpp^n$ denote the set of vectors with positive coordinates. The support $\supp(\yy)$ of a vector $\yy \in \rr^n$ is the set of indices for which $y_i \neq 0$. The cardinality of a set $X$ is denoted $|X|$. Finally, given $\xx \in \rrpp^n$ and $\yy \in \rr^n$, define vector exponentiation as $\xx^{\yy} = x_1^{y_1}x_2^{y_2}\cdots x_n^{y_n}$.

\begin{defn}
\label{def:rn}
    A \df{chemical reaction network} $\mc N = (\mc V, \mc R)$, or \df{reaction network}, is a finite directed graph, where each vertex $\yy \in \mc V$, called a \df{complex}, is a vector in $\rrp^n$. Each edge $(\yy, \yy') \in \mc R$, called a \df{reaction}, is denoted $\yy \to \yy'$. 
\end{defn}

\begin{rmk}
The definition above is equivalent to the classical definition of a reaction network being a triple $(\mc S, \mc C, \mc R)$, where $\mc S$ is the set of \emph{species}, $\mc C$ is the set of \emph{complexes} and $\mc R$ is the set of \emph{reactions}~\cite{CraciunBanaji2010, CraciunFeinberg2005, CraciunFeinberg2006,  CraciunYu2018_review, Feinberg_lecture}. Indeed, given $\mathcal{N}$ as above, the set of \df{species} is identified (by an abuse of notation) to the standard basis $\{\ee_1, \ee_2,\ldots, \ee_n\}$ of $\rr^n$, and the complexes are non-negative linear combinations of the species. Conversely, given a triple $(\mc S, \mc C, \mc R)$ as described in \cite{CraciunFeinberg2005, Feinberg_lecture}, we can use the same identification between the set of species and the standard basis of $\rr^n$ to write a complex as a vector $\vv y \in \rr_{\geq 0}^n$. If $i \in \supp (\yy)$, then we say that $\cf{X}_i$ (identified with $\ee_i$) is a species in the complex $\vv y$. 
\end{rmk} 

For any reaction $\vv y \to\vv  y'$, we call $\vv y$ a \df{reactant complex}, and  $\vv y'$ a \df{product complex}. A species in $\vv y$ is a \df{reactant species} of the reaction $\vv y \to\vv  y'$, and a species in $\vv y'$ is a \df{product species} of the reaction. In other words, $\supp(\vv y)$ consists of the reactant species, while $\supp(\vv y')$ consists of the product species. A \df{bispecies reaction} is one with two reactant species. To avoid excessive subscripts later, we enumerate the set of reactions: $\mc R = \{ \yy_r \to \yy'_r \st 1 \leq r \leq R\}$. When indexing over the set of reactions, as in $r \in \mc R$, we mean the reaction $\yy_r \to \yy'_r$.

To model how concentrations of species change over time, we make an assumption on the rate at which each reaction proceeds. One of the most common models in chemistry and biochemistry is \emph{mass-action kinetics}, which assumes that the reaction rate is proportional to the concentration of reactant species. 

\begin{defn}\label{def:mas}
A \df{mass-action system} $\Gk$ is a reaction network $\mc N = (\mc V, \mc R)$ together with a vector of rate constants $\vv \kk \in \rrpp^{\mc R}$.  The dynamics of the concentration vector $\xx(t)$ is given by
	\eqn{\label{eq:mas}
		\dot{\xx}(t) &= \sum_{r \in \mc R} \kk_r [\xx(t)]^{\yy_r} (\yy'_r-\yy_r).
    }
\end{defn}

In a mass-action system, the consumption of reactants and the production of products occur simultaneously. However, some systems have an inherent delay between consumption and production~\cite{Roussel1996}; for example, the binding of two single-stranded DNA molecules via nucleation-propagation mechanism, or an enzyme needing to change its conformation after taking part in a reaction. In these scenarios, the reactants are consumed immediately while the products become available at a later time. We define a delay mass-action system to be a mass-action system augmented with a vector of delay parameters, one non-negative parameters for each reaction.

\begin{defn}\label{def:dmas}
    A \df{delay mass-action system} $\Gkde$ is a mass-action system $\Gk$ with a vector of delays $\vv\tau \in \rrp^{\mathcal R}$. The dynamics of the concentration vector $\xx(t)$ is given by 
    \eqn{\label{eq:dmas}
		\dot{\xx}(t) &= \sum_{r \in \mc R} \kk_r [\xx(t- \tau_r)]^{\yy_r} \yy'_r - \sum_{r \in \mc R}
		 \kk_r [\xx(t)]^{\yy_r} \yy_r.
	}
\end{defn}

For an initial value problem of \eqref{eq:dmas}, the initial data is a function defined on the interval $[-\bar{\tau}, 0]$ where $\bar{\tau} = \max_r \tau_r$. If all reactions occur without delay, i.e., $\vv \tau=\vv 0$, then from the perspective of dynamics, $\Gkde$ is not different from $\Gk$~\cite{BanksRobbinsSutton2013}. Indeed, the ODE system \eqref{eq:mas} is identical to the delay system  \eqref{eq:dmas} when $\vv\tau = \vv 0$. Moreover, the ODE system \eqref{eq:mas} has only non-negative solutions if the initial condition is non-negative~\cite{LiptakHangosPitukSzederkenyi2018}; similarly, the first quadrant is also forward invariant for the delay system~\cite{Bodner2000}. 

The systems \eqref{eq:mas} and \eqref{eq:dmas} share the same set of positive steady states~\cite{LiptakHangosPitukSzederkenyi2018}; in other words, a positive constant solution $\vv x(t) \equiv \vv x^*$ is a steady state for the delay system \eqref{eq:dmas} if and only if it is a steady state of the ODE system \eqref{eq:mas}. A positive steady state $\vv x^*$ is also called an \df{equilibrium}. Note, however, that although the sets of equilibria are the same for the ODE and delay systems, they generally satisfy \emph{different} conservation relations; solving for an equilibrium with a particular initial condition for the delay system can be difficult. A mass-action system with positive initial data $\vv \theta \in \rrpp^n$ may have a conservation relation  
    \eq{ 
        \xx(t) - \vv\theta \in S,
    }
where $S = \Span\{ \yy'_r-\yy_r \st r \in \mc R\}$ is the stoichiometric subspace. A delay mass-action system with positive initial data $\vv \theta$ defined on the interval $[-\bar{\tau}, 0]$ may admit a conservation relation~\cite{LiptakHangosPitukSzederkenyi2018} 
    \eq{ 
        \xx(t) - \vv \theta(0) 
        + \sum_{r \in \mc R} \kk_r  \left(  \int_{t-\tau_r}^t  [\xx(s)]^{\yy_r}  \, ds - \int_{-\tau_r}^0 [\vv \theta(s)]^{\yy_r}   \, ds \right) \yy_r 
        \in S.
    }
In this paper, we only consider systems whose stoichiometric subspace $S$ is all of $\rr^n$, and so the ODE and delay systems share the same set of equilibria regardless of initial condition.

\begin{ex}
\label{ex:DNA}
We now illustrate how we represent a delay mass-action system $\Gkde$ using a simple model for DNA duplex formation. A DNA helix comprises of two complementary single-stranded DNA. During the nucleation step, several base pairs must find their partners in the complementary strand. However, once that happens, the two strands zip together like a zipper; this is the propagation step~\cite[Chapter 23]{NuclProp-DNA_book}.

As a toy model of duplex formation, consider two single-stranded DNA ($\cf{S}$) forming a duplex ($\cf{D}$) reversibly with some time delays. Reality is of course much more complicated; we are neglecting that the single-stranded DNA should be complementary, not identical. Moreover, in DNA replication, usually one strand forms a template, and the other strand is built from individual nucleotides. Finally, we are also neglecting the physical process whereby the double-stranded DNA twists to form a helix, and the thermodynamics when long sequences are involved. In this toy model, we assume that the delay parameters are proportional to the length of the DNA sequence. Moreover, we include the degradation of $\cf{D}$, and the synthesis and degradation of $\cf{S}$. 

The delay mass-action system is 
    \tikzinline{1}{
        \node (2S) at (0,0) [left] {$\cf{2S}$};
        \node (D) at (1.5,0) [right] {$\cf{D}$};
        \node (0) at (3.55,0) [right] {$\cf{0}$};
        \node (S) at (5.6,0) [right] {$\cf{S}$};
        \draw [revrxn, revrxnN] (2S) -- (D) node [above, midway] {\ratecnst{$\kk_1$}, \delayparam{$\tau_1$}};
        \draw [revrxn, revrxnS] (D) -- (2S) node [below, midway] {\ratecnst{$\kk_2$}, \delayparam{$\tau_2$}};
        \draw [fwdrxn] (D) -- (0) node [above, midway] {\ratecnst{$\kk_3$}};
        \draw [revrxn, revrxnN] (0) -- (S) node [above, midway] {\ratecnst{$\kk_{4}$}};
        \draw [revrxn, revrxnS] (S) -- (0) node [below, midway] {\ratecnst{$\kk_{5}$}};
    }
where the rate constants and delay parameters (if non-zero) are shown as edge labels. By an abuse of notation, let $\cf{S}$ and $\cf{D}$ denote the concentration variables of the single-stranded and double-stranded DNA respectively. The associated system of delay differential equations 
    \eq{ 
        \dot{\cf{S}}(t) &= \kk_{4} - \kk_{5} \cf{S}(t) - 2 \kk_1 [\cf{S}(t)]^2 + 2\kk_2 \cf{D}(t-\tau_2) \\
        \dot{\cf{D}}(t) &= - \kk_3 \cf{D}(t) + \kk_1 [\cf{S}(t-\tau_1)]^2 - \kk_2 \cf{D}(t)
    }
has a single equilibrium for any choice of positive rate constants. This is given by 
    \eq{ 
        \frac{\kk_1 \cf{S}^2}{\kk_2 + \kk_3}
        = \cf{D} = 
        \frac{ 2\kk_1 \cf{S}^2 + \kk_{5}\cf{S} - \kk_4}{2\kk_2}. 
    }
The resulting quadratic equation $2\kk_1\kk_3 \cf{S}^2 + \kk_5(\kk_2+\kk_3) \cf{S} - \kk_{4}(\kk_2+\kk_3) = 0$ always has a positive root and a negative root, leading to a unique positive equilibrium.
\end{ex}

One way to study the asymptotic stability of a delay system's equilibrium $\xx^*$ is by linearizing about $\xx^*$. The linearized system has a solution of the form $\xx(t) = \xx^* + \vv a e^{\lambda t}$ with $\vv a \neq \vv 0$ if and only if 
    \eqn{\label{eq:char}  
        \det( \Jac_\lambda(\xx^*, \vv\kk, \vv\tau) - \lambda \Id) = 0,
    }
where
    \eqn{\label{eq:Jlambda} 
        \Jac_\lambda(\xx,\vv\kk, \vv\tau) 
        = \sum_{r \in \mc R} \kk_r
        \left( \mtxvsp
             \frac{\partial \xx^{\yy_r}}{\partial x_1}  \left( e^{-\lambda \tau_r} \yy'_r - \yy_r\right) 
             \,\,,\,\,\,\cdots \,\,\,,\,\, 
            \frac{\partial \xx^{\yy_r}}{\partial x_n}  \left( e^{-\lambda \tau_r} \yy'_r - \yy_r\right) 
        \right) .
    }
A complete derivation the \df{characteristic equation} \eqref{eq:char} can be found in \cite{CraciunMinchevaPanteaYu2020}. Note that if $\vv\tau = \vv 0$, then $\Jac_\lambda(\xx, \vv\kk,\vv 0)$ is the Jacobian matrix $\Jac(\xx,\vv\kk)$ of the corresponding ODE system \eqref{eq:mas}.

\begin{ex}
\label{ex:DNA-char}
We return to \Cref{ex:DNA}. The matrices 
    \eq{ 
        \Jac_\lambda(\xx^*,\vv\kk,\vv\tau) = 
        \begin{pmatrix}
        -\kk_5 - 4 \kk_1 \cf{S}^* & 2\kk_2 e^{-\lambda \tau_2} \\
        2\kk_1 \cf{S}^* e^{-\lambda \tau_1} & -\kk_2 -\kk_3  
        \end{pmatrix}
        \quad \text{and}\quad 
        \Jac (\xx^*, \vv\kk) = 
        \begin{pmatrix}
        -\kk_5 - 4 \kk_1 \cf{S}^* & 2\kk_2  \\
        2\kk_1 \cf{S}^*  & -\kk_2 -\kk_3  
        \end{pmatrix}
    }
are clearly related. The characteristic equation \eqref{eq:char} of the delay system is 
    \eq{ 
        0 = \lambda^2 + \lambda \left( 4\kk_1 \cf{S}^*  + \kk_2 + \kk_3 + \kk_5\right) 
         + \left(4\kk_1\cf{S}^* + \kk_5  \right)\left(\kk_2 + \kk_3 \right)
        - 4 \kk_1 \kk_2 \cf{S}^* e^{-\lambda(\tau_1+\tau_2)}.
    }
Note that the characteristic equation is generally a polynomial of $\lambda$ and $e^{-\lambda \tau_r}$, with coefficients that depend on $\vv\kk$ and $\xx^*$. 
\end{ex}

If every root $\lambda$ of the characteristic equation has negative real part, then the equilibrium $\xx^*$ is asymptotically stable. In this work, we are interested in two stronger notions of stability: one independent of the choice of non-negative delay parameters, and another independent of both the choice of positive rate constants and non-negative delay parameters.

\begin{defn}
\label{def:absstab}
    A mass-action system $\Gk$ is \df{absolutely stable} if for any equilibrium and any choice of delay parameters $\vv\tau \geq 0$, every root of the characteristic equation \eqref{eq:char} of the delay mass-action system $\Gkde$ has negative real part.
\end{defn}

\begin{defn}
\label{def:delaystab}
    A reaction network $\mc N$ is \df{delay stable} if the delay mass-action system $\Gkde$ is absolutely stable for any choices of $\vv \kk > \vv 0$.
\end{defn}

In \cite{CraciunMinchevaPanteaYu2020}, we provided an algebraic condition for delay stability for a class of reaction networks called \df{non-autocatalytic networks}. Later in this work, we will work with the slightly more restrictive networks with no \df{one-step catalysis}.

\begin{defn}\label{def:NAC}
    A reaction $\yy \to \yy'$ is said to be an \df{autocatalytic reaction} if $\supp(\yy) \cap \supp(\yy') \neq \emptyset$, and $y'_i > y_i$ for some $i \in \supp(\yy) \cap \supp(\yy')$. A reaction network is a \df{non-autocatalytic network} if it has no autocatalytic reaction. 
\end{defn} 
\begin{defn}\label{def:N1C}
    A reaction $\yy \to \yy'$ is said to be a \df{one-step catalysis} if $\supp(\yy) \cap \supp(\yy') \neq \emptyset$.
\end{defn}
An autocatalytic reaction is one where a species promotes its own growth. In contrast, a one-step catalysis is a reaction that involves a reactant species that is not totally consumed. Note that an autocatalytic reaction is a one-step catalysis. Therefore, a reaction network with \emph{no} one-step catalysis is a non-autocatalytic network.

The algebraic condition in \cite{CraciunMinchevaPanteaYu2020}, based on \cite{HofbauerSo2000}, involves yet another matrix, the \df{modified Jacobian matrix} of a reaction network:
    \eqn{\label{eq:Jtilde} 
        \tilde{\Jac} (\xx, \vv\kk) 
        = \sum_{r\in \mc R} \kk_r 
        \left( \mtxvsp
             \frac{\partial \xx^{\yy_r}}{\partial x_1}  \left(  \yy'_r + \tilde{\yy}^{(1)}_r\right) 
             \,\,,\,\,\,\cdots \,\,\,,\,\, 
            \frac{\partial \xx^{\yy_r}}{\partial x_n}  \left(  \yy'_r + \tilde{\yy}_r^{(n)}\right) 
        \right), 
    }
where $\tilde{\yy}^{(j)} = (y_1,\ldots, -y_j, \ldots, y_n)^\top$. This matrix is reminiscent of the Jacobian matrix $\Jac$; in the $j$th column, we flip the sign of the reactant complex off-diagonal. Finally, recall that a matrix $\mm A$ is a $P_0$-matrix if it has only non-negative principal minors~\cite{FiedlerPtak1962, Johnson1974}.

\begin{prop}[Corollary 4.5,~\cite{CraciunMinchevaPanteaYu2020}]
\label{cor:delay-alg}
Let $\mathcal N$ be a non-autocatalytic network. Let $\Jac$ and $\tilde{\Jac}$ be the Jacobian and modified Jacobian matrices, viewed as functions of $\vv x > \vv 0$ and $\vv \kk > \vv 0$. Suppose $\det \Jac \neq 0$, $\tilde{\Jac}_{ii}<0$ for all $i$, and  $-\tilde{\Jac}$  is a $P_0$-matrix for all choices of  $\vv \kk > \vv 0$  and all equilibria $\vv x > \vv 0$  of $\Gk$. Then $\mathcal{N}$ is delay stable. 
\end{prop}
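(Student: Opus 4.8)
By \Cref{def:absstab,def:delaystab}, delay stability is exactly the assertion that, for every $\vv\kk > \vv 0$, every equilibrium $\xx^* > \vv 0$, and every $\vv\tau \geq \vv 0$, the characteristic equation \eqref{eq:char} has no root with $\Re\lambda \geq 0$. The plan is to prove the equivalent statement that $M(\lambda) := \lambda\Id - \Jac_\lambda(\xx^*,\vv\kk,\vv\tau)$ is nonsingular throughout $\Re\lambda \geq 0$. First I would dispose of $\lambda = 0$: since $e^{-0\cdot\tau_r} = 1$, formula \eqref{eq:Jlambda} gives $\Jac_0 = \Jac$ independently of $\vv\tau$, so $M(0) = -\Jac$ is nonsingular by the hypothesis $\det\Jac \neq 0$. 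It remains to treat $\lambda \neq 0$ with $\Re\lambda \geq 0$.

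The engine is a pair of entrywise estimates tying $M(\lambda)$ to the modified Jacobian $\tilde{\Jac}$ of \eqref{eq:Jtilde}. Since $\frac{\partial \xx^{\yy_r}}{\partial x_j} \geq 0$ and $|e^{-\lambda\tau_r}| = e^{-\Re\lambda\,\tau_r} \leq 1$ for $\Re\lambda \geq 0$, the triangle inequality (using $|e^{-\lambda\tau_r}(\yy'_r)_i - (\yy_r)_i| \leq (\yy'_r)_i + (\yy_r)_i$) gives, for $i \neq j$,
\[
  \bigl|M(\lambda)_{ij}\bigr| \;\leq\; \sum_{r\in\mc R}\kk_r\,\frac{\partial \xx^{\yy_r}}{\partial x_j}\bigl((\yy'_r)_i + (\yy_r)_i\bigr) \;=\; \tilde{\Jac}_{ij}.
\]
On the diagonal the sign flips defining $\tilde{\yy}^{(j)}$ only touch off-diagonal positions, so $\tilde{\Jac}_{jj} = \Jac_{jj}$, and using $\Re(e^{-\lambda\tau_r}) \leq 1$ with $(\yy'_r)_j \geq 0$ yields the lower bound $\Re\bigl(M(\lambda)_{jj}\bigr) \geq \Re\lambda - \tilde{\Jac}_{jj}$. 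Since $\tilde{\Jac}_{jj} < 0$, the matrix $-\tilde{\Jac}$ has positive diagonal and nonpositive off-diagonal entries; being additionally a $P_0$-matrix, it is a (possibly singular) M-matrix.

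I would then run a single null-vector argument. Suppose $M(\lambda)\vv v = \vv 0$ for some $\vv v \neq \vv 0$, write $M = M(\lambda)$, and let $\vv d > \vv 0$ satisfy $(-\tilde{\Jac})\vv d \geq \vv 0$ (such a scaling vector exists for an M-matrix). Choosing the index $k$ maximizing $|v_k|/d_k$ and reading off the $k$-th row $M_{kk}v_k = -\sum_{j\neq k}M_{kj}v_j$, the two estimates combine into
\[
  \Re\lambda - \tilde{\Jac}_{kk} \;\leq\; \Re(M_{kk}) \;\leq\; |M_{kk}| \;\leq\; \tfrac{1}{d_k}\!\sum_{j\neq k}|M_{kj}|\,d_j \;\leq\; \tfrac{1}{d_k}\!\sum_{j\neq k}\tilde{\Jac}_{kj}\,d_j \;\leq\; -\tilde{\Jac}_{kk},
\]
the last step being $(-\tilde{\Jac})\vv d \geq \vv 0$. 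This forces $\Re\lambda \leq 0$, hence $\Re\lambda = 0$ and equality throughout. Writing $\lambda = i\nu$, the equality $\Re(M_{kk}) = |M_{kk}|$ makes $M_{kk}$ real, while $\Re(M_{kk}) = -\tilde{\Jac}_{kk}$ forces $\cos(\nu\tau_r) = 1$, hence $\sin(\nu\tau_r) = 0$, for every reaction having $k$ both as reactant and product. These are precisely the reactions contributing to $\Im(M_{kk})$, so $\Im(M_{kk}) = \nu$; reality of $M_{kk}$ then gives $\nu = 0$, i.e. $\lambda = 0$, already excluded. This contradiction shows $M(\lambda)$ is nonsingular on $\Re\lambda \geq 0$, which is delay stability; taking $\vv\tau = \vv 0$ in the same computation shows $\Jac$ is Hurwitz, recovering the ODE asymptotic stability mentioned in the introduction.

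The step I expect to be the main obstacle is securing the positive scaling vector $\vv d$ and carrying the boundary equality analysis through when $-\tilde{\Jac}$ is only a $P_0$-matrix (so possibly singular) and reducible: the clean Perron choice $(-\tilde{\Jac})\vv d = \vv 0$ with $\vv d > \vv 0$ is guaranteed only in the irreducible case, and in general one must approximate $-\tilde{\Jac}$ by the nonsingular M-matrices $-\tilde{\Jac} + \epsilon\Id$, extract a limiting $\vv d$, and check that the equalities used to eliminate the imaginary axis survive the limit. Everything else reduces to bookkeeping with the two entrywise bounds.
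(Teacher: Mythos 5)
Your skeleton is sound and is essentially the right argument: the two entrywise estimates ($|M(\lambda)_{ij}| \leq \tilde{\Jac}_{ij}$ off-diagonal, $\Re(M(\lambda)_{jj}) \geq \Re\lambda - \tilde{\Jac}_{jj}$ on the diagonal, using $|e^{-\lambda\tau_r}|\leq 1$ for $\Re\lambda\geq 0$), the disposal of $\lambda = 0$ via $\Jac_0 = \Jac$, and the boundary analysis forcing $\cos(\nu\tau_r)=1$, hence $\sin(\nu\tau_r)=0$ and $\Im(M_{kk})=\nu$, are all correct. Be aware that the paper itself does not prove this proposition --- it imports it as Corollary 4.5 of \cite{CraciunMinchevaPanteaYu2020}, whose proof rests on the diagonal-dominance-with-delays criterion of \cite{HofbauerSo2000}; your attempt is in effect a reconstruction of that argument.

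The genuine gap is exactly the step you flagged, and it is not merely a technicality to be smoothed over: under the stated hypotheses a strictly positive scaling vector $\vv d$ with $(-\tilde{\Jac})\vv d \geq \vv 0$ \emph{need not exist}. A Z-matrix that is $P_0$ with positive diagonal can be a singular, reducible M-matrix admitting no such $\vv d$; for instance
\[
A = \begin{pmatrix} 1 & -1 & 0 \\ -1 & 1 & -1 \\ 0 & 0 & 1 \end{pmatrix}
\]
has all principal minors equal to $0$ or $1$, yet $A\vv d \geq \vv 0$ forces $d_1 \geq d_2$ and $d_2 \geq d_1 + d_3$, hence $d_3 \leq 0$. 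Your proposed $\epsilon$-regularization does not repair this: in the same example, any $\vv d_\epsilon > \vv 0$ with $(A+\epsilon\Id)\vv d_\epsilon \geq \vv 0$ satisfies $d_{\epsilon,3} \leq \bigl((1+\epsilon)^2-1\bigr)d_{\epsilon,1}$, so every normalized limit vector has a zero entry, and then $\max_k |v_k|/d_k$ is ill-defined and the row-$k$ estimate collapses. The correct fix dispenses with $\vv d$ altogether: set $w_k = |v_k|$ and combine your two estimates row by row in $M(\lambda)\vv v = \vv 0$ to get $\Re\lambda\, w_k + \bigl((-\tilde{\Jac})\vv w\bigr)_k \leq 0$ for all $k$ in the support $K$ of $\vv w$. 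If $\Re\lambda > 0$, then $-\tilde{\Jac} + \Re\lambda\,\Id$ is a nonsingular M-matrix, whose inverse-positivity forces $\vv w = \vv 0$, a contradiction. If $\lambda = i\nu$ with $\nu \neq 0$, restrict to the principal submatrix $(-\tilde{\Jac})_{KK}$, again a $P_0$ Z-matrix: were $(-\tilde{\Jac})_{KK}\vv w_K < \vv 0$ strict in every row, writing $(-\tilde{\Jac})_{KK} = \alpha\,\Id - C$ with $C \geq 0$ would give $C\vv w_K \geq \alpha'\vv w_K$ for some $\alpha' > \alpha$ with $\vv w_K > \vv 0$, whence $\rho(C) > \alpha$ by Perron--Frobenius, contradicting $P_0$. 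So equality holds in some row $k \in K$, your chain of inequalities becomes a chain of equalities at that $k$ (with $w_k > 0$), and your $\cos/\sin$ analysis finishes the proof verbatim. With that substitution the proposal closes; alternatively, this is precisely the content of the cited results.
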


\begin{ex}
Consider the delay system $\Gkde$
    \tikzinline{1}{
        \node (xy) at (-1,-1) [left] {$\cf{X+Y}$};
        \node (z) at (0.5,-1) [right] {$\cf{Z}$};
        \node (x) at (3,-1) [left] {$\cf{X}$};
        \node (y) at (4.5,-1) [right] {$\cf{Y}$};
        \node (0a) at (-3,0) [left] {$\cf{X}$};
        \node (0b) at (-1.5,0) [right] {$\cf{0}$};
        \node (1a) at (1,0) [left] {$\cf{Y}$};
        \node (1b) at (2.5,0) [right] {$\cf{0}$};
        \node (2a) at (5,0) [left] {$\cf{Z}$};
        \node (2b) at (6.5,0) [right] {$\cf{0}$};
        \draw [fwdrxn] (xy) -- (z) node [above, midway] {\ratecnst{$\kk_5$}, \delayparam{$\tau_1$}};
        \draw [fwdrxn] (x) -- (y) node [above, midway] {\ratecnst{$\kk_6$}, \delayparam{$\tau_2$}};
        %%%
        \draw [revrxn, revrxnN] (0a) -- (0b) node [above, midway] {\ratecnst{$\kk_1$}};
        \draw [revrxn, revrxnS] (0b) -- (0a) node [below, midway] {\ratecnst{$\kk_{4}$}};
        \draw [fwdrxn] (1a) -- (1b) node [above, midway] {\ratecnst{$\kk_2$}};
        \draw [fwdrxn] (2a) -- (2b) node [above, midway] {\ratecnst{$\kk_3$}};
    }
which evolves according to
    \eq{ 
        \dot{x}(t) &= \kk_4 - \kk_1 x(t)
            - \kk_5 x(t) y(t)
            - \kk_6 x(t)
        \\
        \dot{y}(t) &= \hphantom{\kk_4} {}-\kk_2 y(t)
            - \kk_5 x(t) y(t)
            + \kk_6 x(t-\tau_2)
        \\
        \dot{z}(t) &= \hphantom{\kk_4} {}-\kk_3 z(t)
            + \kk_5 x(t-\tau_1) y(t-\tau_1).
    }
The Jacobian (of the ODE system) 
    \eq{ 
    \det \Jac &=
    \det \begin{pmatrix}
        -\kk_1-\kk_5 y -\kk_6 & -\kk_5 x & 0 \\
        -\kk_5 y + \kk_6 & -\kk_2-\kk_5 x & 0 \\
        \kk_5 y & \kk_5 x & -\kk_3
    \end{pmatrix}
    \\&= -\kk_3 \left( 
         \kk_1\kk_2 + \kk_1\kk_5x + \kk_2\kk_5 y + \kk_2\kk_6 
         + 2\kk_5\kk_6 x 
    \right) 
    }
is non-zero. The modified Jacobian matrix is
    \eq{ 
        \tilde{\Jac} &=
        \begin{pmatrix}
        -\kk_1-\kk_5 y -\kk_6 & \emphc{+}\kk_5 x & 0 \\
        \emphc{+}\kk_5 y + \kk_6 & -\kk_2-\kk_5 x & 0 \\
        \kk_5 y & \kk_5 x & -\kk_3
    \end{pmatrix}.
    }
Clearly, the diagonal terms are all negative. The $2\times 2$ minors of $-\tilde{\Jac}$ are 
    \eq{ 
    [\tilde{\Jac}]_{1,2} &= \kk_1\kk_2 + \kk_1 \kk_5 x + \kk_2\kk_5 y + \kk_2\kk_6 ,
    \\
    [\tilde{\Jac}]_{1,3} &= \kk_3( \kk_1 + \kk_5 y + \kk_6), 
    \\
    [\tilde{\Jac}]_{2,3} &= \kk_3(\kk_2 + \kk_5 x) ,
    }
while $\det(- \tilde{\Jac}) >0$. So $\tilde{-\Jac}$ is a $P_0$-matrix for any $\xx > \vv 0$ and $\vv\kk > \vv 0$. In particular, any equilibrium of the delay system $\Gkde$ is asymptotically stable, independent the choice of delay parameters $\vv\tau \geq \vv 0$ and the choice of rate constants $\vv\kk > \vv 0$. In other words, $\mc N$ is delay stable. 
\end{ex}

% %%%%%%%%%%%%%%%%%%%%%%%%%%%%%%%%%%%%%%%%%%%%%
\section{Modified Jacobian and its corresponding reaction network}
\label{sec:delay-modified}

\Cref{cor:delay-alg} converted a problem involving a transcendental equation, the characteristic equation, to a purely algebraic one, involving determinants of matrices. One may ask what is the significance of the modified Jacobian matrix $\tilde{\Jac}$, or perhaps the biological interpretation of the result. The remainder of this paper attempts to address the second point; we consider the directed species-reaction (DSR) graph of a reaction network, and show that a lack of certain types of cycles in the DSR graph implies the algebraic condition in \Cref{cor:delay-alg}, thus avoiding the modified Jacobian matrix. Before reaching that point in \Cref{thm:delay-SR}, first we demonstrate that the modified Jacobian matrix $\tilde{\Jac}$ is in a sense the Jacobian matrix of a different reaction network.

In this section, we describe how to construct a mass-action system whose Jacobian matrix is the modified Jacobian matrix \eqref{eq:Jtilde}. For the purpose of communication, we refer to the starting network as the \lq\lq original network\rq\rq\ and the newly constructed network as the \lq\lq modified network\rq\rq. We introduce the procedure via an example. 

\begin{ex}
\label{ex:modifiedNetwork}
The mass-action system $\mc N_{\vv\kk}$
    \tikzinline{1}{
        \node (xy) at (0,0) [left] {$\cf{X}+\cf{Y}$};
        \node (z) at (1.5,0) [right] {$\cf{2Z}$};
        \draw [revrxn, revrxnN] (xy) -- (z)  node [midway, above] {\ratecnst{$\kk_1$}};
        \draw [revrxn, revrxnS] (z)--(xy) node [midway, below] {\ratecnst{$\kk_2$}};
        \begin{scope}[transform canvas={xshift=-10pt}]
        \node (xyz) at (0,-1.5) [left] {$\cf{X}+\cf{2Y}+\cf{Z}$};
        \node (w) at (1.5, -1.5) [right] {$\cf{W}$};
        \node (0) at (3.5,-1.5) [right] {$\cf{0}$};
        \draw [fwdrxn] (xyz) -- (w) node [midway, above] {\ratecnst{$\kk_3$}};
        \draw [fwdrxn] (w) -- (0) node [midway, above] {\ratecnst{$\kk_4$}};
        \end{scope}\node at (0,-1.5) {};
    }
has  
    \eq{ 
        \tilde{\Jac}(\xx,\vv\kk)
        &= \begin{pmatrix} 
            - \kk_1y -\kk_3y^2z & 
            \kk_1 x + 2\kk_3 xyz & 
            2\kk_2z + \kk_3xy^2   & 0 \\%%%%%%%%%%%%
            \kk_1 y + 2\kk_3y^2z  & 
            - \kk_1 x -4\kk_3xyz  & 
            2\kk_2z + 2\kk_3xy^2 & 0 \\%%%%%%%%%%%
            2\kk_1 y + \kk_3y^2z & 
            2\kk_1 x + 2\kk_3 xyz  & 
            -4\kk_2z -\kk_3 xy^2   & 0 \\%%%%%%%%%%%
            \kk_3y^2z & 
            2\kk_3 xyz & 
            \kk_3xy^2 & 
            -\kk_4 
        \end{pmatrix}
    } 
as its modified Jacobian matrix. In constructing our modified network, we do not change the reactions involving less than two reactant species. For any reaction involving two or more reactant species, we create new reactions where the reactant species are migrated to the product side; for example, the reaction $\cf{X}+\cf{Y}\rightarrow \cf{2Z}$ splits into two reactions: $\cf{X}\rightarrow\cf{2Z}+\cf{Y}$ and $\cf{Y}\rightarrow\cf{2Z}+\cf{X}$. 
    
Fix a positive state $\xx^* = (x^*, y^*, z^*, w^*)^\top$. The modified mass-action system $\Gktilde$ contains the reactions
    \tikzinline{1}{\begin{scope}[transform canvas={xshift=-10pt}]
        \begin{scope}[transform canvas={xshift=70pt}]
        \node (w) at (-8.5,0) [left] {$\cf{W}$};
        \node (0) at (-7,0) {$\cf{0}$};
        \draw [fwdrxn] (w) -- (0) node [midway, above] {\ratecnst{$\kk_4$}};
        \node (2z) at (-8.5,-1.5) [left] {$\cf{2Z}$};
        \node (xyz) at (-7,-1.5) [right] {$\cf{X}+\cf{Y}$};
        \draw [fwdrxn] (2z) -- (xyz) node [midway, above] {\ratecnst{$\kk_2$}};
        \end{scope}
        %%%%%%%%%%%%%%%%%%%%%%%%%
        \begin{scope}[transform canvas={xshift=0pt}]
        \node (2) at (-1.5,0) [left] {$\cf{X}$};
        \node (2t) at (0.25,0) [right] {$\cf{2Z}+\cf{Y}$};
        \draw [fwdrxn] (2) -- (2t) node [midway, above] {\ratecnst{$\kk_1y^*$}};
        \node (2) at (-1.5,-1.5) [left] {$\cf{Y}$};
        \node (2t) at (0.25,-1.5) [right] {$\cf{2Z}+\cf{X}$};
        \draw [fwdrxn] (2) -- (2t) node [midway, above] {\ratecnst{$\kk_1x^*$}};
        \end{scope}
        %%%%%%%%%%%%%%%%%%%%%%%%%
        \begin{scope}[transform canvas={xshift=-45pt}]
        \node (2) at (5.25,0.25) [left] {$\cf{X}$};
        \node (2t) at (6.75,0.25) [right] {$\cf{W}+\cf{2Y}+\cf{Z}$};
        \draw [fwdrxn] (2) -- (2t) node [midway, above] {\ratecnst{$\kk_3(y^*)^2z^*$}};
        \node (2) at (5.25,-0.75) [left] {$\cf{2Y}$};
        \node (2t) at (6.75,-0.75) [right] {$\cf{W}+\cf{X}+\cf{Z}$};
        \draw [fwdrxn] (2) -- (2t) node [midway, above] {\ratecnst{$\kk_3x^*z^*$}};
        \node (2) at (5.25,-1.75) [left] {$\cf{Z}$};
        \node (2t) at (6.75,-1.75) [right] {$\cf{W}+\cf{X}+\cf{2Y}$};
        \draw [fwdrxn] (2) -- (2t) node [midway, above] {\ratecnst{$\kk_3x^*(y^*)^2$}};
        \end{scope}
    \end{scope}
    \node at (6.8,-1.7) {};
    \node at (6.8,0.6) {};
    \node at (-6.8,0) {};
    }
and whose Jacobian matrix is
    \eq{ 
        {\Jac}(\emphc{\xx}; \tilde{\vv\kk}(\vv\kk, \xx^*))
        &= \begin{pmatrix} 
            - \kk_1y^* -\kk_3(y^*)^2z^* & 
            \kk_1 x^* + 2\kk_3 x^*z^*\emphc{y} & 
            2\kk_2\emphc{z} + \kk_3x^*(y^*)^2   & 0 \\%%%%%%%%%%%%
            \kk_1 y^* + 2\kk_3(y^*)^2z^*  & 
            - \kk_1 x^* -4\kk_3x^*z^*\emphc{y}  & 
            2\kk_2\emphc{z} + 2\kk_3x^*(y^*)^2 & 0 \\%%%%%%%%%%%
            2\kk_1 y^* + \kk_3(y^*)^2z^* & 
            2\kk_1 x^* + 2\kk_3 x^*z^*\emphc{y} & 
            -4\kk_2\emphc{z} -\kk_3 x^*(y^*)^2   & 0 \\%%%%%%%%%%%
            \kk_3(y^*)^2z^* & 
            2\kk_3 x^*z^*\emphc{y} & 
            \kk_3x^*(y^*)^2 & 
            -\kk_4 
        \end{pmatrix}, 
    } 
    which depends on $\xx^* = (x^*, y^*, z^*, w^*)^\top$ as well as $\emphc{\xx} = (x, \emphc{y}, \emphc{z}, w)^\top$. In particular, $\tilde{\Jac}(\xx^*, \vv\kk) = \Jac(\xx^*; \tilde{\vv\kk}(\vv\kk, \xx^*))$. 
\end{ex}

It is worth emphasizing that the modified Jacobian matrix $\tilde{\Jac}(\xx,\vv\kk)$ happens to be the Jacobian matrix of another mass-action system, with carefully chosen rate constants and at a specific state. The two mass-action systems do not necessarily share the same set of equilibria. There are generally more reactions in the modified network, and more importantly, its rate constants depend on a chosen state for the original system.

For the general procedure to construct the modified network, consider a mass-action system $\Gk$ consisting of a single reaction:
    \tikzinline{1}{
        \node (r) at (0,0) [left] {$a_1{\cf{X}_1} + a_2{\cf{X}_2} + a_3{\cf{X}_3}$};
        \node (p) at (1.5,0) [right] {$b_1{\cf{X}_1} + b_2{\cf{X}_2} + b_3{\cf{X}_3} + b_4{\cf{X}_4} $};
        \draw [fwdrxn] (r) -- (p) node [above, midway] {\ratecnst{$\kk$}};
    }
with $a_1$, $a_2$, $a_3 > 0$ and $b_1,\ldots, b_4\geq 0$. Fix a positive state $\xx^*$. Define the modified mass-action system $\Gktilde$ with the reactions 
    \tikzinline{1}{
        \node (r1) at (0,0) [left] {$a_1{\cf{X}_1}$};
        \node (p1) at (1.5,0) [right] {$b_1{\cf{X}_1} + (a_2+b_2){\cf{X}_2} + (a_3+b_3){\cf{X}_3} + b_4{\cf{X}_4}$};
        \draw [fwdrxn] (r1) -- (p1) node [above, midway] {\footnotesize $\tilde{\kk}_1$};
        \node (r2) at (0,-1) [left] {$a_2{\cf{X}_2}$};
        \node (p2) at (1.5,-1) [right] {$(a_1+b_1){\cf{X}_1} + b_2{\cf{X}_2} + (a_3+b_3){\cf{X}_3} + b_4{\cf{X}_4}$};
        \draw [fwdrxn] (r2) -- (p2) node [above, midway] {\footnotesize $\tilde{\kk}_2$};
        \node (r3) at (0,-2) [left] {$ a_3{\cf{X}_3}$};
        \node (p3) at (1.5,-2) [right] {$(a_1+b_1){\cf{X}_1} + (a_2+b_2){\cf{X}_2} + b_3{\cf{X}_3} + b_4{\cf{X}_4}$};
        \draw [fwdrxn] (r3) -- (p3) node [above, midway] {\footnotesize $\tilde{\kk}_3$};
    }
where $\tilde{\kk}_1 = \kk (x_2^*)^{a_2}(x_3^*)^{a_3}$, $\tilde{\kk}_2 = \kk (x_1^*)^{a_1}(x_3^*)^{a_3}$ and $\tilde{\kk}_3 = \kk (x_1^*)^{a_1} (x_2^*)^{a_2}$. 

The matrix $\Jac(\xx; \tilde{\vv\kk}(\vv\kk, \xx^*))$ has contributions from three reactions, each filling a column:
    \eq{
        \Jac(\xx; \tilde{\vv\kk}(\vv\kk, \xx^*))
        =
        \begin{pmatrix} 
            \tilde{\kk}_1 \frac{\partial x_1^{a_1}}{\partial x_1} (b_1 - a_1) & 
            \,\,\tilde{\kk}_2 \frac{\partial x_2^{a_2}}{\partial x_2}  (b_1 + a_1) \,\, & 
            \tilde{\kk}_3 \frac{\partial x_3^{a_3}}{\partial x_3} (b_1 + a_1) & \quad 0\quad 
            \\[6pt]
            \tilde{\kk}_1 \frac{\partial x_1^{a_1}}{\partial x_1} (b_2 + a_2) & 
            \tilde{\kk}_2 \frac{\partial x_2^{a_2}}{\partial x_2}  (b_2 - a_2) & 
            \tilde{\kk}_3 \frac{\partial x_3^{a_3}}{\partial x_3} (b_2 + a_2) & 0
            \\[6pt]
            \tilde{\kk}_1 \frac{\partial x_1^{a_1}}{\partial x_1} (b_3 + a_3) & 
            \tilde{\kk}_2 \frac{\partial x_2^{a_2}}{\partial x_2} (b_3 + a_3) & 
            \tilde{\kk}_3 \frac{\partial x_3^{a_3}}{\partial x_3} (b_3 - a_3) & 0 
            \\[6pt]
            \tilde{\kk}_1 \frac{\partial x_1^{a_1}}{\partial x_1} (b_4) & 
            \tilde{\kk}_2 \frac{\partial x_2^{a_2}}{\partial x_2} (b_4) & 
            \tilde{\kk}_3 \frac{\partial x_3^{a_3}}{\partial x_3} (b_4) & 0
        \end{pmatrix}.
    }
In each column, there is a sign change off-diagonal, just as one expects in the modified Jacobian matrix $\tilde{\Jac}$ of the network $\mc N$. Moreover, the coefficient, say in first column, is 
    \eq{ 
        \tilde{\kk}_1 \frac{\partial x^{a_1}}{\partial x_1} = \kk a_1 x_1^{a_1-1} (x_2^*)^{a_2}(x_3^*)^{a_3}.
    }
A similar calculation can be done for each column. Thus, when $\xx = \xx^*$, the modified Jacobian matrix $\tilde{\Jac}(\xx^*, \vv\kk)$ is exactly $\Jac(\xx^*; \tilde{\vv\kk}(\vv\kk, \xx^*))$.

The construction above generalizes to reactions involving even more species. What is remarkable is that the resulting modified network does not depend on the choice of rate constants and the positive state. We now formally state the construction for a general reaction network. Let $\ee_i$ be the $i$th standard basis vector of $\rr^n$. 

\begin{defn}\label{def:modifiedMAS}
    Let $\Gk$ be a mass-action system  and fix $\xx^* \in \rrpp^n$. Let $\tilde{\mc R}$ be the set of the following reactions, with rate constants $\tilde{\vv\kk}$ as specified:
    \begin{enumerate}[label={\textup{(\alph*)}}]
        \item include any reaction $\yy \to \yy' \in \mc R$ with $|\supp(\yy)| \leq 1$, with rate constant $\kk_{\yy \to \yy'}$, and 
        \item for any $\yy \to \yy' \in \mc R$ with $|\supp(\yy)| \geq 2$ and any $i \in \supp(\yy)$, include the reactions
            \eq{ 
                y_i \ee_i \to \yy' + \yy - y_i \ee_i ,
            }
        with rate constant $\kk_{\yy \to \yy'}  (\xx^*)^{\yy}/(x^*_i)^{y_i}$. 
    \end{enumerate}
    Let $\tilde{\mc V}$ be the set of source and target complexes of the reactions in $\tilde{\mc R}$. Then $\Gtilde = (\tilde{\mc V}, \tilde{\mc R})$ is the \df{modified network} of $\mc N$, and $\Gktilde$ is the \df{modified mass-action system} at $\xx^*$.  
\end{defn}

\begin{prop}
\label{prop:modifiedsystem}
    Let $\Gkde$ be a delay mass-action system, and $\tilde{\Jac}$ be the modified Jacobian matrix \eqref{eq:Jtilde} evaluated at some state $\xx^* > \vv 0$. Let $\Gktilde$ be the modified mass-action system at $\xx^*$. Then $\tilde{\Jac}$ is the Jacobian matrix of $\Gktilde$ evaluated at $\xx^*$. 
\end{prop}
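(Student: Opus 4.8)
The plan is to exploit the additivity of both sides over the reactions of $\mc N$, and then verify the claimed equality one reaction at a time, column by column. Both matrices are sums over reactions: the modified Jacobian $\tilde{\Jac}$ in \eqref{eq:Jtilde} is written as an explicit sum over $r \in \mc R$, and the Jacobian of any mass-action system, obtained by differentiating \eqref{eq:mas}, decomposes as a sum over its reactions, each reaction $\vv u \to \vv v$ with rate constant $c$ contributing to column $j$ the scalar $c\,\partial_{x_j}\xx^{\vv u}$ times its reaction vector $\vv v - \vv u$. Since \Cref{def:modifiedMAS} groups the reactions of $\Gktilde$ according to which reaction $\yy\to\yy'$ of $\mc N$ they are derived from, it suffices to fix a single $\yy\to\yy' \in \mc R$ and match the columns it contributes to $\tilde{\Jac}(\xx^*,\vv\kk)$ with the columns contributed by its associated reactions in $\tilde{\mc R}$, evaluated at $\xx^*$.

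For the case $|\supp(\yy)|\leq 1$ of \Cref{def:modifiedMAS}, the reaction is carried over unchanged, so I would only observe that its contribution to $\tilde{\Jac}$ coincides with its ordinary Jacobian contribution. Indeed, $\partial_{x_j}\xx^{\yy}$ vanishes whenever $y_j = 0$, so the only possibly nonzero column is the single index $j$ in $\supp(\yy)$; and in that one column the sign-flipped vector $\yy' + \tilde{\yy}^{(j)}$ equals the genuine reaction vector $\yy'-\yy$. Hence the sign flip is invisible on the one nonzero column, and the two contributions agree for every $\xx$, in particular at $\xx^*$.

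The substantive case is $|\supp(\yy)|\geq 2$, which rests on two bookkeeping identities. First, for each $i \in \supp(\yy)$ the split reaction $y_i\ee_i \to \yy' + \yy - y_i\ee_i$ has net change vector
\[
    (\yy' + \yy - y_i\ee_i) - y_i\ee_i = \yy' + \tilde{\yy}^{(i)},
\]
exactly the vector appearing in the $i$th column of $\yy\to\yy'$ in \eqref{eq:Jtilde}. Second, the source complex $y_i\ee_i$ has monomial $x_i^{y_i}$, whose partial derivatives vanish except with respect to $x_i$, so this reaction contributes only to column $i$. Evaluating that column at $\xx^*$ and inserting the prescribed rate constant $\tilde{\kk}_i = \kk(\xx^*)^{\yy}/(x_i^*)^{y_i}$, I would use the cancellation
\[
    \tilde{\kk}_i\,\frac{\partial (x_i^{y_i})}{\partial x_i}\Big|_{\xx^*}
    = \kk\,\frac{(\xx^*)^{\yy}}{(x_i^*)^{y_i}}\, y_i (x_i^*)^{y_i-1}
    = \kk\,\frac{\partial \xx^{\yy}}{\partial x_i}\Big|_{\xx^*},
\]
which is precisely the scalar multiplying $\yy'+\tilde{\yy}^{(i)}$ in the $i$th column of $\tilde{\Jac}(\xx^*,\vv\kk)$. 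Thus the split reaction indexed by $i$ reproduces column $i$, and as $i$ ranges over $\supp(\yy)$ these reactions fill exactly the columns where $\partial_{x_j}\xx^{\yy}$ is nonzero, the remaining columns being zero on both sides. Summing over all reactions of $\mc N$ then yields $\tilde{\Jac}(\xx^*,\vv\kk) = \Jac(\xx^*;\tilde{\vv\kk}(\vv\kk,\xx^*))$.

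I do not expect a genuine obstacle: the content is the exact matching of the rate-constant normalization in \Cref{def:modifiedMAS}(b) with the monomial derivative, together with the fact that each split reaction is supported on a single species and hence occupies one column. The only care needed is in the edge cases—the columns $j \notin \supp(\yy)$ and the $|\supp(\yy)|\leq 1$ reactions—to confirm that the sign flip in $\tilde{\yy}^{(j)}$ never touches a nonzero column there.
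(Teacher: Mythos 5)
Your proof is correct and takes essentially the same route as the paper, which merely sketches the argument (``treating each reaction as in the sample calculation above, and noting that the Jacobian matrix is linear with respect to reactions''): you decompose both matrices reaction by reaction and match columns, using the identity $(\yy'+\yy-y_i\ee_i)-y_i\ee_i=\yy'+\tilde{\yy}^{(i)}$ and the cancellation $\tilde{\kk}_i\,\partial_{x_i}(x_i^{y_i})\big|_{\xx^*}=\kk\,\partial_{x_i}\xx^{\yy}\big|_{\xx^*}$. Your write-up in fact supplies more detail than the paper's one-line proof, including the edge cases $|\supp(\yy)|\leq 1$ and the columns $j\notin\supp(\yy)$.
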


The proof of this proposition follows by treating each reaction as in the sample calculation above, and noting that the Jacobian matrix is linear with respect to reactions.

% %%%%%%%%%%%%%%%%%%%%%%%%%%%%%%%%%%%%%%%%%%%%%
\section{Non-injectivity and the DSR graph}
\label{sec:delay-SRintro}

The aim of this section is to introduce the \emph{directed species-reaction graph} (\emph{\SR})~\cite{CraciunBanaji2010}, and in \Cref{sec:delay-SR} we provide conditions on the \SR s of the original or modified network sufficient to conclude delay stability.

The \SR, and its closely related cousin the \emph{species-reaction graph} were used to study \emph{injectivity} of a reaction network, i.e., the Jacobian keeps the same sign for any positive rate constants and at any positive state~\cite{CraciunFeinberg2005, CraciunFeinberg2006, CraciunBanaji2009, CraciunBanaji2010}. In particular, injectivity can be used to rule out the capacity for \emph{multistationarity}, since an injective network cannot admit multiple positive steady states for any choice of rate constants. There are other variants of the \SR, including Petri nets from computer science and Volpert's graph for chemical reactions. We focus on a version that is a hybrid of what is defined in  \cite{CraciunBanaji2010} and \cite{CraciunFeinberg2006}.

% %%%%%%%%%%%%%%%%%%%%%%%%%%%%%%%%%%%%%%%%%%%%%
%% SR GRAPHS

Recall that a network with no one-step catalysis is one where $\supp(\yy)\cap\supp(\yy') = \emptyset$ for any reaction $\yy \to \yy'$. With such a network, the \SR\ takes on a simpler form. Here, we give a definition of the directed species-reaction graph that is sufficient for our purpose. We first illustrate how a \SR\ is drawn for a given network.

\begin{ex}
\label{ex:SR-intro}
Consider the reaction network
    \tikzinline{1}{
    \node (xy) at (-2.75,0) [right] {$\cf{X}+\cf{Y}$};
        \node (2z) at (0,0) [right] {$\cf{2Z}$};
        \node (z) at (2,0) [right] {$\cf{Z}$};
        \node (y) at (2,-1.5) [right] {$\cf{Y}$};
        \node (x) at (4,0) [right] {$\cf{X}$};
        \node (0) at (4,-1.5) [right] {$\cf{0}$};
        \draw [fwdrxn] (xy) -- (2z) node [above]{\ratecnst{\vphantom{$\kk_1$}}};
        \draw [fwdrxn] (z) -- (x);
        \draw [revrxn, revrxnNW] (y) -- (x);
        \draw [revrxn, revrxnSE] (x) -- (y);
        \draw [fwdrxn] (0) -- (y);
        \draw [revrxn, revrxnW] (0) -- (x);
        \draw [revrxn, revrxnE] (x) -- (0);
    }
with three species and five edges, two of which are reversible pairs. Two reactions ($\cf{0}\to\cf{X}$ and $\cf{0}\to\cf{Y}$) are \emph{inflows} and will not appear in \SR, likewise the \emph{outflow} reaction $\cf{X}\to\cf{0}$ will also not appear. The \SR\ of this reaction network, shown in \Cref{fig:SR-intro}, contains three species nodes and three reaction nodes. For the irreversible reaction $\cf{X+Y}\to\cf{2Z}$, the source species $\cf{X}$ and $\cf{Y}$ are connected to the reaction node by undirected edges, which are labelled with their \emph{stoichiometric coefficients} ($1$ and $1$ respectively). The product species $\cf{Z}$ receives an incoming edge from the reaction node, labelled with its stoichiometric coefficient $2$. For a reversible reaction like that of $\cf{X} \RR \cf{Y}$, the edges connecting the reaction node and the corresponding species nodes are undirected. In a \SR, any undirected edge should be understood as bidirectional.
\end{ex}

\begin{figure}[h!tbp] 
\centering 
    \begin{tikzpicture}
    \node[Snode] (x) at (0,1) {$\cf{X}$};
    \node[Snode] (y) at (0,-1) {$\cf{Y}$};
    \node[Snode] (z) at (5.5,0) {$\cf{Z}$};
    \node[Rnodelong] (r1) at (2.5,0) {$\cf{X}+\cf{Y} \to 2\cf{Z}$};
    \node[Rnodelong] (r2) at (2.5,2.5) {$\cf{Z} \to \cf{X}$};
    \node[Rnodelong] (r3) at (-2.25,0) {$\cf{X} \RR \cf{Y}$};
    %%%%%%
    %% X + Y \to 2Z
    \draw [SR, highlightorange] (x.east)--([srNW] r1.north west) node [midway, above] {\footnotesize $1$};
    \draw [SR, highlightorange] (y.east)--([srSW] r1.south west) node [midway, below] {\footnotesize $1$};
    \draw [DSR] (r1.east)--(z.west) node [midway, above] {\footnotesize $2$};
    %% Y <=> X
    \draw [SR] (y.west) to [bend left] node [midway, below] {\footnotesize $1$} ([xshift=0.5cm] r3.south);
    \draw [SR] ([xshift=0.5cm] r3.north)  to [bend left] node [midway, above] {\footnotesize $1$} (x.west);
    %% Z \to X
    \draw [SR] (z.north) to [bend right] node[midway, above] {\footnotesize $1$} (r2.east) ;
    \draw [DSR] (r2.west) to [bend right] node [midway, above] {\footnotesize $1$} (x.north);

    %%%%%%%%% JUST COPY %%%%%%%%%%%%
    \node[Snode] (x) at (0,1) {$\cf{X}$};
    \node[Snode] (y) at (0,-1) {$\cf{Y}$};
    \node[Snode] (z) at (5.5,0) {$\cf{Z}$};
    \node[Rnodelong] (r1) at (2.5,0) {$\cf{X}+\cf{Y} \to \cf{2Z}$};
    \node[Rnodelong] (r2) at (2.5,2.5) {$\cf{Z} \to \cf{X}$};
    \node[Rnodelong] (r3) at (-2.25,0) {$\cf{X} \RR \cf{Y}$};
\end{tikzpicture}
    \caption[Directed species-reaction (DSR) graph]{The \SR\ of \Cref{ex:SR-intro,ex:SR-cycles}. The edges (\orange{orange}) connecting the S-nodes $\cf{X}$ and $\cf{Y}$ to the R-node of the bispecies reaction is a \emph{c-pair} (\Cref{def:SR-cycles}).}
    \label{fig:SR-intro}
\end{figure}
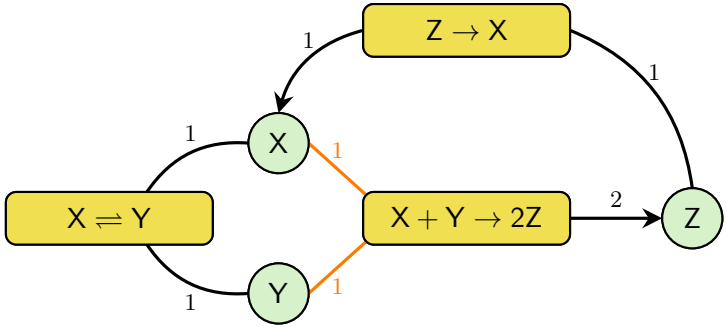

We now define the \df{directed species-reaction graph} (or \df{\SR}) of a reaction network with no one-step catalysis. The \SR\ is a bipartite graph with two kinds of nodes: \df{S-nodes} (one for each species) and \df{R-nodes} (one for each reaction that is neither a generalized inflow nor a generalized outflow). A reaction is a \df{generalized inflow} if it is of the form $\cf{0} \to a_i \cf{X}_i$ for some $a_i > 0$. It is a \df{generalized outflow} if it is of the form $a_i \cf{X}_i \to \cf{0}$ for some $a_i > 0$. To simplify the language, we refer to a S-node as if it is the species, and a R-node as if it is the relevant reaction; for example, an R-node $\cf{R}$ is irreversible if the corresponding reaction in the network is irreversible. An edge in the \SR\ will be denoted by the ordered pair in $V_S \times V_R$; whether it is directed or not will be explicitly stated. An edge connects a S-node $\cf{X}_i$ with a R-node $\cf{R}$ if and only if the species $\cf{X}_i$ participates as a reactant or a product in the corresponding reaction (pair) $\cf{R}$. The edge is \emph{directed} towards $\cf{X}_i$ if the reaction is \emph{irreversible} and $\cf{X}_i$ is a product in $\cf{R}$. Finally, each edge is labelled with the \df{stoichiometric coefficient} of the species in the reaction $\cf{R}$. 

In this work, a \SR\ is always associated to an underlying reaction network. However, it can also be viewed as a graph in its own right. Notably, a \SR\ $\DSR = (V_S, V_R, E, \sigma)$ is a bipartite, partially directed graph $(V_S, V_R, E)$ with a map $\sigma\colon E \to \rrpp$. Vertices in $V_S$ are S-nodes, while those in $V_R$ are R-nodes. Each edge in $E$, either directed or undirected, is assigned a stoichiometric coefficient by the map $\sigma$.

\begin{rmk}
    \SR\ is defined for more general reaction networks. If the network has a one-step catalysis, its \SR\ is a multigraph~\cite{CraciunFeinberg2006, CraciunBanaji2010}. For example, the \SR\ of the reaction $\cf{X}+\cf{Y} \FR \cf{2X}$ has two edges between the S-node $\cf{X}$ and the R-node, one of which is not directed and assigned a stoichiometric coefficient $1$, while the other is directed and assigned a coefficient $2$. In this work, we avoid any one-step catalysis. 
\end{rmk}

The aim is to deduce information about the determinant of the Jacobian matrix from the \SR~\cite{CraciunFeinberg2006, CraciunBanaji2010, CraciunBanaji2009}. Cycles in the \SR\ are of interest to us. A subset of edges defines a subgraph of the \SR. A \emph{path} is an open simple walk compatible with the orientation of all edges it contains (undirected edges are thought of as allowing either orientations), and a \emph{cycle} is a closed simple walk compatible with the orientation of all of its eges. If $C_1$ and $C_2$ are two cycles, we say the intersection $C_1\cap C_2$ is non-empty if there is at least one edge in $C_1 \cap C_2$, and the orientation of every edge in $C_1\cap C_2$ is consistent with that of $C_1$ and also that of $C_2$.

\begin{defn}
\label{def:SR-cycles}
    Let $\DSR = (V_S, V_R, E, \sigma)$ be the \SR\ of a reaction network with no one-step catalysis. 
    \begin{enumerate}[label={\textup{(\alph*)}}]
    \item 
        A \df{c-pair} (\df{complex-pair}) is a pair of edges adjacent to a R-node such that the two adjacent species are reactants in the reaction corresponding to that R-node.  
    \item 
        A cycle is an \df{e-cycle} (\df{even-cycle}) if it contains an even number of c-pairs. 
    \item 
        A cycle is an \df{o-cycle} (\df{odd-cycle}) if it contains an odd number of c-pairs. 
    \item 
        Alternatingly multiply and divide the stoichiometric coefficients along a cycle. If the result is equal to $1$, then the cycle is an \df{s-cycle}.
    \item  
        Two cycles $C_1$ and $C_2$ have \df{S-to-R intersection} if $C_1\cap C_2$ is non-empty and all  connected components of $C_1\cap C_2$  are paths of odd length, i.e., each starting at an S-node and terminating at an R-node, or  starting at an R-node and terminating at an S-node. We say $\DSR$ \df{has S-to-R intersection} if there exist two cycles in $\DSR$ with S-to-R intersection.
    \end{enumerate}
\end{defn}

\begin{ex}
\label{ex:SR-cycles}
We revisit the \SR\ in \Cref{fig:SR-intro}. The coloured (orange) edges connecting the S-nodes $\cf{X}$ and $\cf{Y}$, to the R-node of the irreversible bispecies reaction, form a c-pair. There are three cycles in the \SR: the left-most cycle $C_1$ contains only the S-nodes $\cf{X}$ and $\cf{Y}$; the upper-right cycle $C_2$ contains only the S-nodes $\cf{X}$ and $\cf{Z}$; and  running along the outer edges of the graph, $C_3$ contains all three S-nodes.
The left-most cycle $C_1$ passing through only $\cf{X}$ and $\cf{Y}$ is a s-cycle and an o-cycle, since all stoichiometric coefficients are $1$ and $C_1$ contains the c-pair. The cycle $C_2$ passing through only $\cf{X}$ and $\cf{Z}$ is \emph{not} a s-cycle, but it is an e-cycle. The cycle $C_3$ passing through all S-nodes is an e-cycle but not a s-cycle. Finally, $C_1$ and $C_2$ have a S-to-R intersection, since $C_1\cap C_2$ is half of the c-pair inheriting the directions of the two cycles. Similarly, $C_1$ and $C_3$ have a S-to-R intersection, and $C_2$ and $C_3$ also have S-to-R intersection.
\end{ex}

Cycles in the \SR\ are intimately connected to the principal minors of the Jacobian matrix~\cite{CraciunFeinberg2006, CraciunBanaji2009, CraciunBanaji2010, BanajiRutherford2011, Banaji2012}.

\begin{thm}[\cite{CraciunFeinberg2006,CraciunBanaji2010}]
\label{thm:SR-cf2006}
    Let $\mc N$ be a network with no one-step catalysis, and there is a generalized outflow for every species. Suppose its \SR\ satisfies the following conditions:
    \begin{enumerate}[label={\textup{(\alph*)}}]
        \item all cycles are o-cycles or s-cycles, and 
        \item no two e-cycles have S-to-R intersection. 
    \end{enumerate}
    Then the negative Jacobian matrix $-\Jac$ of the mass-action system $\Gk$ is a $P_0$-matrix for any $\vv\kk > \vv 0$ and $\xx > \vv 0$. Moreover, $\det(-\Jac) > 0$. 
\end{thm}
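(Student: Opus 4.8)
The plan is to reduce the $P_0$ claim to a combinatorial statement about the \SR\ by way of a factorization of the Jacobian. Since the network has no one-step catalysis, the reactant support $\supp(\yy_r)$ and product support $\supp(\yy'_r)$ are disjoint for each reaction $r$; writing $\Gamma$ for the $n\times R$ stoichiometric matrix with columns $\yy'_r-\yy_r$ and $Y$ for the $n\times R$ matrix with columns $\yy_r$, a direct computation from \eqref{eq:Jlambda} at $\vv\tau=\vv 0$ gives $\Jac(\xx,\vv\kk)=\Gamma\,\mm D\,Y^\top\mm X^{-1}$ with positive diagonal matrices $\mm D=\diag(\kk_r\xx^{\yy_r})$ and $\mm X=\diag(x_j)$. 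Scaling the columns by the positive numbers $x_j$ preserves the signs of all principal minors, so $-\Jac$ is a $P_0$-matrix if and only if $(-1)^{|\alpha|}\det\big((\Gamma\mm D Y^\top)[\alpha]\big)\geq 0$ for every index set $\alpha$, which is what I would establish.

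I would then open up each minor combinatorially. Cauchy--Binet applied to $(\Gamma\mm D Y^\top)[\alpha]=\Gamma[\alpha,\cdot]\,\mm D\,(Y[\alpha,\cdot])^\top$ gives
\[
    \det\big((\Gamma\mm D Y^\top)[\alpha]\big)=\sum_{\gamma}\Big(\prod_{r\in\gamma}\kk_r\xx^{\yy_r}\Big)\,\det\Gamma[\alpha,\gamma]\,\det Y[\alpha,\gamma],
\]
a sum over reaction sets $\gamma$ with $|\gamma|=|\alpha|$ and strictly positive weights. Expanding each determinant over bijections $\alpha\to\gamma$ and pairing the two bijections, every nonzero term is indexed by a subgraph of the \SR\ in which each S-node of $\alpha$ and each R-node of $\gamma$ has degree two; such a subgraph decomposes into degenerate $2$-cycles (a reactant edge traversed by both bijections, i.e. a diagonal contribution) and genuine cycles of the \SR. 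The crux is a sign lemma: the contribution of each subgraph to $(-1)^{|\alpha|}\det\Gamma[\alpha,\gamma]\det Y[\alpha,\gamma]$ factors over its components, a degenerate $2$-cycle always contributing a $(+)$ sign, while a genuine cycle contributes $(+)$ if it is an o-cycle and $(-)$ if it is an e-cycle. This is because reactant edges carry negative entries of $\Gamma$ and product edges positive ones, and c-pairs are precisely the R-nodes at which two reactant edges meet; so the parity of the number of c-pairs controls the overall sign.

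It then remains to show the weighted sum is non-negative. Subgraphs built only from diagonal $2$-cycles and o-cycles are non-negative and harmless. A subgraph containing an e-cycle produces a negative term, and here hypotheses (a) and (b) enter. By (a) every such e-cycle is an s-cycle, so the alternating product and quotient of its stoichiometric labels equals $1$; this is exactly the magnitude-matching needed to pair the offending negative term with an equal-magnitude positive term produced by a local re-routing of the matchings that the s-cycle condition makes possible, so the two cancel. Hypothesis (b), that no two e-cycles have an S-to-R intersection, guarantees this pairing extends to a consistent sign-reversing involution even on subgraphs carrying several cycles: it forbids the shared S-to-R paths that would otherwise let two e-cycles recombine into an unmatched negative term. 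The surviving terms are then all non-negative, so $-\Jac$ is a $P_0$-matrix. For the strict inequality I would take $\alpha=\{1,\dots,n\}$: the generalized outflow $a_i\cf{X}_i\to\cf{0}$ present for every species (absent from the \SR) contributes a strictly positive diagonal entry to $-\Jac$, and the fully diagonal term assembled from these outflows survives every cancellation, forcing $\det(-\Jac)>0$.

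The main obstacle is the sign lemma together with the cancellation bookkeeping. Pinning down the per-component sign requires tracking how the signs of the $\Gamma$-entries, the signatures of the two paired bijections, and the global factor $(-1)^{|\alpha|}$ combine into the parity of c-pairs, with the degenerate $2$-cycles handled separately. Turning the s-cycle cancellation into a genuine involution on terms --- and using (b) to prove it has no fixed points among the negative terms while never cancelling the positive outflow backbone --- is the delicate heart of the argument.
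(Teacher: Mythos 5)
This theorem is stated in the paper with a citation to \cite{CraciunFeinberg2006,CraciunBanaji2010} and no proof, so your proposal can only be compared against the arguments in those references --- and it is a faithful reconstruction of them. The factorization $\Jac=\Gamma\,\mm D\,Y^\top\mm X^{-1}$ is correct, the column-scaling reduction is valid, and your sign lemma checks out: distributing $(-1)^{|\alpha|}$ as one factor of $-1$ per S-node, a doubled reactant edge contributes $(-1)\cdot(-y_{ri})\,y_{ri}>0$, and a genuine cycle with $k$ S-nodes contributes $(-1)^{k-1}(-1)^{k}(-1)^{\#\mathrm{cpairs}}=-(-1)^{\#\mathrm{cpairs}}$, since the reactant $\Gamma$-edges of a valid coloring are exactly the c-pair positions (the $Y$-edge at each R-node is forced to be a reactant edge). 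Two points you left implicit are worth making explicit: the no-one-step-catalysis hypothesis is what makes every $\Gamma$-entry on a reactant edge strictly negative (with catalysis it could vanish or flip sign, breaking both the sign lemma and the existence of partners), and since product edges can only appear as $\Gamma$-edges, every term-cycle traversed in the appropriate direction really is a cycle of the \SR{} in the paper's sense. Your cancellation mechanism is also the right one, though note that the two valid colorings of a single cycle subgraph give terms of \emph{equal} sign and magnitude, so they do not cancel each other; the correct partner of a negative e-cycle term is the term in which one bijection is re-routed along the cycle to \emph{agree} with the other, producing the all-doubled-reactant-edge subgraph, whose magnitude matches precisely by the s-cycle identity $\prod_j|\Gamma_{i_jr_j}|=\prod_j y_{i_j r_{j-1}}$ --- this is consistent with your phrase ``local re-routing,'' but the proposal should be read with that disambiguation. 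The genuinely hard part, constructing the sign-reversing involution and using hypothesis (b) to keep it injective and fixed-point-free when a subgraph carries several e-cycles (and to keep the all-outflow diagonal term, whose reactions are absent from the \SR{}, out of its range), is where most of the length of the cited proofs lies; you correctly identify both the difficulty and the exact role of (b), but your proposal asserts rather than proves this step, so it is a correct plan matching the cited route, not yet a complete proof.
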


The result in \cite{CraciunBanaji2010} is slighly stronger than the version above: under the same hypothesis, $-\Jac$ is a $P$-matrix, i.e. it has positive principal minors. We also note that, moreover, all non-zero minors of $-\Jac$ are polynomials in $\vv\kk$ and $\vv\xx^*$~\cite{CraciunFeinberg2005} with positive coefficients. 

In \Cref{cor:delay-alg}, we presented an algebraic condition for delay stability based on the modified Jacobian matrix $\tilde{\Jac}$. We then related $\tilde{\Jac}$ to the Jacobian matrix of a different mass-action system in \Cref{prop:modifiedsystem}. Therefore by \Cref{thm:SR-cf2006}, delay stability follows if the modified mass-action system has a \SR\ that satisfies the above graph-theoretic conditions. 
Furthermore, our construction of the modified network always results in reactions involving at most one reactant species, so there is \emph{no} c-pair in the \SR\ of the modified network, and all cycles are e-cycles. This simplifies \Cref{thm:SR-cf2006}.

\begin{cor}
\label{cor:SR-cf2006}
    Let $\mc N$ be a network with no one-step catalysis, and there is a generalized outflow for every species. For any choice of rate constants $\vv\kk > \vv 0$, let $\Gk$ denote the mass-action system, and let $\Gktilde$ be its modified mass-action system at any positive state $\xx > \vv 0$, with Jacobian matrix $\tilde{\Jac}$. Suppose the \SR\ $\tilde{\DSR}$ of $\tilde{\mc N}$ satisfies the following conditions:
    \begin{enumerate}[label={\textup{(\alph*)}}]
        \item all cycles are s-cycles, and 
        \item there is no S-to-R intersection.
    \end{enumerate}
    Then $-\tilde{\Jac}$ is a $P_0$-matrix, and $\det(-\tilde{\Jac}) > 0$ for any $\vv\kk > \vv 0$ and $\xx > \vv 0$.
\end{cor}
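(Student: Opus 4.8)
The plan is to apply \Cref{thm:SR-cf2006} to the modified network $\tilde{\mc N}$ and then transport the conclusion back to $\tilde{\Jac}$ via \Cref{prop:modifiedsystem}. The structural fact that makes this work, already noted in the text preceding the statement, is that by \Cref{def:modifiedMAS} every reaction of $\tilde{\mc N}$ has at most one reactant species: the reactions inherited in part (a) satisfy $|\supp(\yy)| \leq 1$, while each split reaction $y_i\ee_i \to \yy' + \yy - y_i\ee_i$ from part (b) has reactant complex $y_i\ee_i$ with singleton support $\{i\}$. Since a c-pair (\Cref{def:SR-cycles}) requires two distinct reactant species adjacent to a common R-node, $\tilde{\DSR}$ contains \emph{no} c-pairs. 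Consequently every cycle of $\tilde{\DSR}$ contains zero c-pairs, an even number, so every cycle is an e-cycle and none is an o-cycle; this is precisely why condition (a) of \Cref{thm:SR-cf2006} collapses to the corollary's requirement that all cycles be s-cycles.

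First I would verify that $\tilde{\mc N}$ satisfies the two standing hypotheses of \Cref{thm:SR-cf2006}. A generalized outflow $a_i\cf{X}_i \to \cf{0}$ of $\mc N$ has a single reactant species, so it is retained verbatim by part (a) of \Cref{def:modifiedMAS}; thus $\tilde{\mc N}$ inherits a generalized outflow for every species. For the absence of one-step catalysis, the reactions retained in part (a) keep this property from $\mc N$, while for a split reaction $y_i\ee_i \to \yy' + \yy - y_i\ee_i$ the reactant support is $\{i\}$ and the $i$th coordinate of the product is $y'_i + y_i - y_i = y'_i$. Because $\mc N$ has no one-step catalysis and $i \in \supp(\yy)$, we have $i \notin \supp(\yy')$, i.e.\ $y'_i = 0$; hence $i$ is absent from the product support and the reactant and product supports are disjoint (\Cref{def:N1C}).

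Next I would translate the hypotheses and invoke the theorem. Condition (a) of the corollary gives condition (a) of \Cref{thm:SR-cf2006} immediately (an s-cycle is in particular an o-cycle or s-cycle), and condition (b)---that $\tilde{\DSR}$ has no S-to-R intersection at all---in particular forbids two e-cycles from having S-to-R intersection, giving condition (b) of the theorem. \Cref{thm:SR-cf2006} then applies to $\Gktilde$ with the positive rate constants $\tilde{\vv\kk}(\vv\kk,\xx^*)$ (positive since $\vv\kk > \vv 0$ and $\xx^* > \vv 0$): its negative Jacobian is a $P_0$-matrix with positive determinant at every positive state. Evaluating at $\xx^*$ and invoking \Cref{prop:modifiedsystem}, which identifies this Jacobian with $\tilde{\Jac}(\xx^*,\vv\kk)$, shows $-\tilde{\Jac}$ is a $P_0$-matrix with $\det(-\tilde{\Jac}) > 0$; since $\xx^* > \vv 0$ and $\vv\kk > \vv 0$ were arbitrary, the conclusion holds for all positive states and rate constants.

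The argument is essentially bookkeeping, so I do not expect a genuine obstacle; the one place demanding care is the verification that $\tilde{\mc N}$ is again free of one-step catalysis, which is exactly where the hypothesis on $\mc N$ is consumed (forcing $y'_i = 0$). A secondary point to keep straight is that $\tilde{\DSR}$ depends only on the \emph{structure} of $\tilde{\mc N}$, which---unlike the modified rate constants $\tilde{\vv\kk}$---is independent of $\xx^*$ and $\vv\kk$, so hypotheses (a) and (b) are well-posed and need be checked only once.
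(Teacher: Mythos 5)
Your proof is correct and follows essentially the same route as the paper, which derives the corollary from \Cref{thm:SR-cf2006} via the observation that every reaction of $\tilde{\mc N}$ has a single reactant species, so $\tilde{\DSR}$ has no c-pairs and all cycles are e-cycles. You additionally spell out details the paper leaves implicit---that $\tilde{\mc N}$ inherits the generalized outflows and remains free of one-step catalysis (the forcing of $y'_i = 0$), and the transfer back through \Cref{prop:modifiedsystem}---all of which check out.
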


From the outflows for every species, it also follows that $\tilde{\Jac}_{ii} < 0$. The \SR\ of $\tilde{\mc N}$ does not provide information about the Jacobian $\det\Jac$ of the original network; however, we will see in \Cref{thm:delay-SR} that under some mild assumptions, we know that $\det\Jac \neq 0$. In particular, \Cref{thm:delay-SR} implies delay stability for $\mc N$.

% %%%%%%%%%%%%%%%%%%%%%%%%%%%%%%%%%%%%%%%%%%%%%
\section{DSR-graph condition for delay stability}
\label{sec:delay-SR}

In this section, we relate the \SR s of a reaction network and that of the modified network. In general, the modified network has many more reactions than the original, and its \SR\ contains more nodes and cycles. Moreover, the modified network is an artifact of \Cref{prop:modifiedsystem}; it may not have any biological relevance. The aim is to deduce delay stability based on the structure of the \SR\ of the original network instead of that of the modified network.

For the main result of this section, we assume the (original) reaction network has the followig four properties:
\begin{enumerate}[label={\bf{(N\arabic*)}}, leftmargin=1.25cm]
\item 
    Each species has a generalized outflow, i.e., $a_i\cf{X}_i \to 0$ for some $a_i > 0$ is a reaction for all $i$. 
\item 
    The network has no one-step catalysis, i.e., $\supp(\yy)\cap \supp(\yy') = \emptyset$ for any reaction $\yy \FR \yy'$.
\item 
    Every reaction has at most two different reactant species, i.e., $|\supp(\yy)| \leq 2$ for any reaction $\yy \to \yy'$.
\item 
    Any bispecies reaction is irreversible. 
\end{enumerate}

Many reasonable biochemical systems satisfy \textbf{(N1)}--\textbf{(N3)}. Condition \textbf{(N1)} typically reflects the natural degradation of molecules. Our results do not apply to autocatalytic systems. According to Condition \textbf{(N2)}, no species can participate both as a reactant and as a product of the same reaction. Condition \textbf{(N3)} is similar to, but more relaxed than, the common assumption that a reaction requiring three participating molecules is a rare event and can be safely neglected from the model. Finally, although condition \textbf{(N4)} seems stringent, it is not unreasonable when considering the types of systems for which delays are appropriate. For example, if monomers are abundantly available, polymerization reactions, going through a chain of intermediates, may be approximated as a single irreversible reaction from initiation to termination with delay. Moreover, our model, which uses discrete delay instead of \emph{distributed delay}, may not be appropriate in the near-thermodynamic equilibrium regime where reactions can be considered reversible~\cite{Epstein1990, Roussel1996, RousselRoussel2001}.

\begin{rmk}
    Condition \textbf{(N1)} can be replaced by the following {\em less restrictive} condition:
\begin{enumerate}[leftmargin=1.25cm]
\item[\textbf{(N1')}]
    There exists a choice of $n$ reactions $\yy_1 \to \yy_1'$, $\ldots$, $\yy_n \to \yy_n'$ such that 
    \eq{ 
        \det(\yy_1, \cdots, \yy_n) \det(\yy_1 - \yy_1', \cdots , \yy_n - \yy_n') > 0.
    }
\end{enumerate}
\end{rmk}

\bigskip

The remainder of this section is structured as follows. First we state the main results; then apply them to several networks. Next we illustrate by way of examples the difference between the \SR s of the original and modified networks. Finally we state and prove a series of lemmas, leading to a proof of the main theorem.

\begin{defn}
\label{def:bispeciesedge}
    Let $\cf{R}$ be an R-node of an irreversible reaction involving two reactant species. Let $\cf{X}$ be an S-node corresponding to one of the \emph{product} species of the reaction. The directed edge $(\cf{X}, \cf{R})$ in $\DSR$ is called a \df{bispecies production edge}.
\end{defn}

\begin{figure}[h!tbp]
\centering
    \begin{tikzpicture}
    \node at (0,3.3) {};
    %%%%%%%%%%%%%%%%%%%%%
    \node[Snode] (x) at (0.25,1) {$\cf{X}$};
    \node[Snode] (y) at (0.25,-1) {$\cf{Y}$};
    \node[Snode] (z) at (5.25,0) {$\cf{Z}$};
    \node[Rnodelong] (r1) at (2.5,0) {$\cf{X}+\cf{Y} \to \cf{Z}$};
    \node[Rnodelong] (r2) at (2.5,2.75) {$\cf{Z} \to \cf{X}$};
    %%%%%%
    %% X + Y \to 2Z
    \draw [SR] (x.south east)--([srNW] r1.north west) node [midway, above] {\footnotesize };
    \draw [SR] (y.north east)--([srSW] r1.south west) node [midway, below] {\footnotesize };
    \draw [DSR] (r1.east)--(z.west) node [midway, above] {\footnotesize };
    %% Z \to X
    \draw [SR] (z.north) to [bend right] node[midway, above] {\footnotesize } (r2.east) ;
    \draw [DSR] (r2.west) to [bend right] node [midway, above] {\footnotesize } (x.north);
    
    %%%%%%%%% JUST COPY %%%%%%%%%%%%
   \node[Snode] (x) at (0.25,1) {$\cf{X}$};
    \node[Snode] (y) at (0.25,-1) {$\cf{Y}$};
    \node[Snode] (z) at (5.25,0) {$\cf{Z}$};
    \node[Rnodelong] (r1) at (2.5,0) {$\cf{X}+\cf{Y} \to \cf{Z}$};
    \node[Rnodelong] (r2) at (2.5,2.75) {$\cf{Z} \to \cf{X}$};
\end{tikzpicture}
    \caption{A \SR\ containing a cycle with a bispecies production edge. The bispecies production edge is the directed edge leading into $\cf{Z}$.}
    \label{fig:SRIntxn-bisp-prodt}
\end{figure}
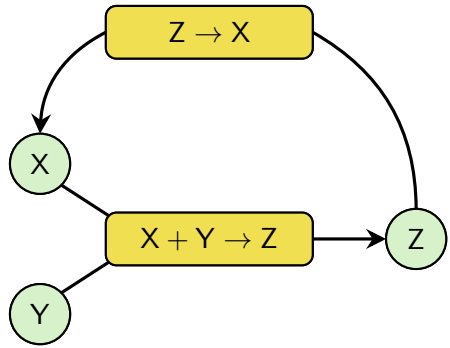

To conclude delay stability, we would like to avoid cycles with bispecies production edges. Such a cycle can be interpreted as a feedback loop. For example, the reaction network 
    \tikzinline{1}{
        \node (xy) at (0,-1.5) [left] {$\cf{X}+\cf{Y}$};
        \node (z) at (1.5, -1.5) [right] {$\cf{Z}$};
        \node (x) at (3.5,-1.5) [right] {$\cf{X}$};
        \draw [fwdrxn] (xy) -- (z);
        \draw [fwdrxn] (z) -- (x);
    }
whose \SR, shown in \Cref{fig:SRIntxn-bisp-prodt}, has a cycle with a bispecies production edge. This cycle points out that the product $\cf{Z}$ of a bispecies reaction is eventually used to feed the production of reactant $\cf{X}$. 

The main results of this section are the following.  The proof of \Cref{thm:SRmodified} can be found after \Cref{ex:CST}.

\begin{thm}
\label{thm:SRmodified}
    Let $\mc N$ be a reaction network satisfying conditions \textbf{(N2)}--\textbf{(N4)}. For any vector of rate constants $\vv\kk > \vv 0$, let $\Gk$ denote the mass-action system, and let $\Gktilde$ be its modified mass-action system at any positive state. In the \SR\ $\tilde{\DSR}$ of the network $\Gtilde$ we have
    \begin{enumerate}[label={\textup{(\alph*)}}]
        \item[\textup{(i)}] 
            all cycles are s-cycles, and 
        \item[\textup{(ii)}] 
            there is no S-to-R intersection, 
    \end{enumerate}
    if and only if in the \SR\ $\DSR$ of the network $\mc N$ we have 
    \begin{enumerate}[label={\textup{(\alph*)}}]
        \item 
            no cycle contains a bispecies production edge; 
        \item 
            all cycles are s-cycles, and 
        \item 
            there is no S-to-R intersection. 
    \end{enumerate}
\end{thm}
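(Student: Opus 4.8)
The plan is to build an explicit correspondence between cycles of $\DSR$ and cycles of $\tilde{\DSR}$ dictated entirely by how each bispecies reaction splits. Fix a bispecies reaction $R\colon a\cf{X}+b\cf{Y}\to\sum_k c_k\cf{P}_k$; by \textbf{(N4)} it is irreversible, by \textbf{(N2)} the products are disjoint from $\cf{X},\cf{Y}$, and by \textbf{(N3)} it is the only kind of reaction that splits, into exactly two. In $\DSR$ the node $R$ carries the c-pair of undirected reactant edges $(\cf{X},R),(\cf{Y},R)$ and the directed bispecies production edges $(\cf{P}_k,R)$. In $\tilde{\DSR}$ it is replaced by $R_{\cf X},R_{\cf Y}$, where $R_{\cf X}$ has one undirected edge to $\cf X$ and directed edges to every $\cf P_k$ and to $\cf Y$, and symmetrically for $R_{\cf Y}$. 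First I would record the \emph{local dictionary}: a cycle meets $R$ through exactly two of its edges, and the four possibilities $\cf X\to R\to\cf Y$, $\cf Y\to R\to\cf X$, $\cf X\to R\to\cf P_k$, $\cf Y\to R\to\cf P_k$ lift to precisely the four ways of threading $R_{\cf X}$ or $R_{\cf Y}$, with identical stoichiometric labels on corresponding edges; segments through monospecies R-nodes are unchanged.

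Using this dictionary I would define a lifting map sending a cycle $C$ of $\DSR$ to the cycle $\tilde C$ of $\tilde{\DSR}$ obtained by lifting each local segment. Since $C$ is simple it meets each $R$ once, so $\tilde C$ uses at most one of $\{R_{\cf X},R_{\cf Y}\}$ per bispecies reaction, and the map is a bijection onto exactly those cycles of $\tilde{\DSR}$. Because corresponding edges carry equal labels, the alternating product is preserved, so $C$ is an s-cycle if and only if $\tilde C$ is; and since $\tilde{\DSR}$ has no c-pairs the even/odd distinction is vacuous. A short case check shows the lift also preserves the oriented intersection of any two cycles: a reactant edge of $R$ traversed by $C_1,C_2$ with opposite orientations fails to be a common edge, matching the fact that the two lifts then thread different split nodes. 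Hence $C_1,C_2$ have an S-to-R intersection if and only if $\tilde C_1,\tilde C_2$ do.

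The cycles of $\tilde{\DSR}$ outside the image of the lift are those threading both $R_{\cf X}$ and $R_{\cf Y}$ for some $R$; the basic ones are the length-four short-circuit loops $L_R\colon \cf X\to R_{\cf X}\to\cf Y\to R_{\cf Y}\to\cf X$, which always exist and are always s-cycles, their alternating product being $ab/(ba)=1$. This is exactly where original condition (a) enters. I would show that if a lift $\tilde C$ uses a bispecies production edge $R_{\cf X}\to\cf P_k$, then $\tilde C$ and $L_R$ share precisely the single edge $(\cf X,R_{\cf X})$, a length-one path from an S-node to an R-node, hence an S-to-R intersection; moreover such a $\tilde C$ exists exactly when some cycle of $\DSR$ traverses a production edge $(\cf P_k,R)$, i.e.\ when (a) fails. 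Conversely, if a cycle threads both split nodes and still uses a production edge, re-routing it through the reactant $\cf Y$ yields a genuine simple cycle of $\DSR$ carrying that bispecies production edge, again contradicting (a). Thus under (a) the only cycles through both split nodes are assembled from short-circuit loops and lifts, all s-cycles, none of which pairs with an $L_R$ to create an S-to-R intersection.

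With these pieces the equivalence follows. For the forward implication I argue contrapositively: a failure of (b) or (c) in $\DSR$ lifts directly to a failure of (i) or (ii); a failure of (a) produces, via a production-edge lift paired with the corresponding $L_R$, an S-to-R intersection, so (ii) fails. For the converse, every cycle of $\tilde{\DSR}$ is either a lift --- an s-cycle by (b), with pairwise intersections controlled by (c) --- or passes through both split nodes of some reaction, which under (a) forces it to be an s-cycle that cannot form an S-to-R intersection with any short-circuit loop. The step I expect to be the main obstacle is precisely the bookkeeping for cycles traversing both $R_{\cf X}$ and $R_{\cf Y}$: these have no simple preimage in $\DSR$ (the naive preimage reuses the reactant edge $(\cf X,R)$), so the correspondence is not a literal bijection, and one must show --- likely by an inductive decomposition that peels off one short-circuit loop at a time --- that condition (a) is exactly what prevents any such cycle from witnessing a violation of (i) or (ii).
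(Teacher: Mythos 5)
Your proposal is, in substance, the paper's own proof read in the opposite direction: your lifting map is the inverse of the surjective graph homomorphism $\Phi\colon\tilde{\DSR}\to\DSR$ constructed in \Cref{def:SR-hom} and \Cref{prop:phi-hom}; your ``short-circuit loops'' $L_R$ are exactly the preimages of c-pairs (\Cref{rmk:cpair-scycle}); your label-preserving cycle correspondence is \Cref{lem:SR-hom-label} together with \Cref{lem:SR-hom-cycle} and \Cref{prop:SR-hom-s-cycle}; and your pairing of a production-edge lift with $L_R$, intersecting in the single edge $(\cf{X},R_{\cf{X}})$, is precisely how \Cref{prop:SR-hom-SRintxn} manufactures an S-to-R intersection in $\tilde{\DSR}$ when condition (a) fails. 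Two remarks. First, the bookkeeping you flag as the main obstacle resolves more cleanly than you anticipate: under (a), a cycle threading both split nodes must enter each through its unique undirected reactant edge, and with production-edge exits excluded it is forced to be exactly $L_R$ --- no inductive peeling of loops is needed (this is \Cref{lem:SR-hom-cycle}(a)); the only place a genuine closed-walk extraction is required is to show that a both-split-node cycle using a production edge projects onto a simple cycle of $\DSR$ containing that edge (loop-erasure of the projected walk suffices), a step the paper also passes over lightly. Second, one intermediate claim of yours is overstated: the lift does \emph{not} preserve oriented intersections unconditionally. If $C_1$ traverses $\cf{X}\to R\to \cf{P}_k$ and $C_2$ traverses $\cf{Y}\to R\to \cf{P}_k$, they share the production edge $(R,\cf{P}_k)$ with consistent orientation in $\DSR$, but the lifts thread different split nodes and the common edge vanishes in $\tilde{\DSR}$. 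This configuration requires a cycle containing a bispecies production edge, i.e.\ a failure of (a), which your case analysis treats separately and first, so your assembled argument is sound; but the preservation claim should be stated only under hypothesis (a), which is exactly how the paper frames it in the first half of the proof of \Cref{prop:SR-hom-SRintxn}.
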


\begin{thm}
\label{thm:delay-SR}%
    Let $\mc N$ be a reaction network satisfying conditions \textbf{(N1)}--\textbf{(N4)}. Suppose $\DSR$, the \SR\ of $\mc N$, satisfies the following:
    \begin{enumerate}[label={\textup{(\alph*)}}]
        \item 
            no cycle contains a bispecies production edge; 
        \item 
            all cycles are s-cycles, and 
        \item 
            there is no S-to-R intersection. 
    \end{enumerate}
    Then $\mc N$ is delay stable, i.e., for any rate constants $\vv\kk > \vv 0$ and any delay parameters $\vv\tau \geq \vv 0$, any equilibrium of the delay mass-action system $\Gkde$ is asymptotically stable. 
\end{thm}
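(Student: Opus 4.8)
The plan is to verify the hypotheses of \Cref{cor:delay-alg}, the algebraic criterion for delay stability: that $\mc N$ is non-autocatalytic, that $\det \Jac \neq 0$, that $\tilde{\Jac}_{ii} < 0$ for every $i$, and that $-\tilde{\Jac}$ is a $P_0$-matrix at every equilibrium for all $\vv\kk > \vv 0$. Once these are in place, delay stability of $\mc N$ is immediate. Two of them come almost for free. Condition \textbf{(N2)} says $\mc N$ has no one-step catalysis, and since every autocatalytic reaction is a one-step catalysis, $\mc N$ is non-autocatalytic; and condition \textbf{(N1)}, the existence of a generalized outflow $a_i \cf{X}_i \to \cf{0}$ for each species, forces the $i$th diagonal entry of $\tilde{\Jac}$ to be strictly negative, as already noted after \Cref{cor:SR-cf2006}. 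It therefore remains to establish the $P_0$-property of $-\tilde{\Jac}$ and the nonvanishing of $\det \Jac$.

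For the $P_0$-property I would route the argument through the modified network. Since $\mc N$ satisfies \textbf{(N2)}--\textbf{(N4)}, \Cref{thm:SRmodified} converts the three hypotheses (a)--(c) on the \SR\ $\DSR$ of $\mc N$ into the two statements that every cycle of the \SR\ $\tilde{\DSR}$ of the modified network $\Gtilde$ is an s-cycle and that $\tilde{\DSR}$ has no S-to-R intersection. Feeding this, together with \textbf{(N1)} and \textbf{(N2)}, into \Cref{cor:SR-cf2006} yields that $-\tilde{\Jac}$ is a $P_0$-matrix (indeed with $\det(-\tilde{\Jac}) > 0$) for all $\vv\kk > \vv 0$ and all $\xx > \vv 0$, hence in particular at every equilibrium, where by \Cref{prop:modifiedsystem} it coincides with the Jacobian of $\Gktilde$.

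The nonvanishing of $\det \Jac$ is the step that cannot be read off the modified network, and it is where hypothesis (a) turns out to be unnecessary. The key observation is that \Cref{thm:SR-cf2006} applies \emph{directly} to the original network $\mc N$: conditions \textbf{(N1)} and \textbf{(N2)} supply the outflow and no-one-step-catalysis hypotheses, while our assumption (b) that all cycles are s-cycles trivially implies that every cycle is an o-cycle or an s-cycle, and our assumption (c) that $\DSR$ has no S-to-R intersection is stronger than the requirement that no two e-cycles intersect S-to-R. Thus \Cref{thm:SR-cf2006} gives $\det(-\Jac) > 0$, whence $\det\Jac = (-1)^n\det(-\Jac) \neq 0$ for all $\vv\kk > \vv 0$ and $\xx > \vv 0$, in particular at every positive equilibrium. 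With all four hypotheses of \Cref{cor:delay-alg} verified, the conclusion follows.

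I expect the only genuinely subtle point to be the treatment of $\det \Jac$. The discussion preceding the theorem stresses that the \SR\ of $\Gtilde$ says nothing about $\det\Jac$, so the temptation is to hunt for a new argument; the resolution is to recognize that hypotheses (b) and (c) are already more than \Cref{thm:SR-cf2006} needs when applied to $\mc N$ itself, and that the uniform sign definiteness it provides rules out a vanishing determinant. Two things are worth double-checking along the way: that $\det \Jac \neq 0$ is needed not at a single point but at \emph{every} positive equilibrium, which the uniform inequality $\det(-\Jac) > 0$ guarantees; and that hypothesis (a), no cycle with a bispecies production edge, is genuinely required only for the modified-network route to the $P_0$-property (through the equivalence in \Cref{thm:SRmodified}), and plays no role in establishing $\det\Jac \neq 0$.
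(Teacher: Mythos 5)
Your proposal is correct and takes essentially the same route as the paper's own proof: \Cref{thm:SRmodified} combined with \Cref{cor:SR-cf2006} and \Cref{prop:modifiedsystem} for the $P_0$-property of $-\tilde{\Jac}$, condition \textbf{(N1)} for the negative diagonal entries, a direct application of \Cref{thm:SR-cf2006} to $\mc N$ itself for $\det\Jac \neq 0$, and finally \Cref{cor:delay-alg}. Your explicit observations --- that hypotheses (b) and (c) more than satisfy the requirements of \Cref{thm:SR-cf2006}, and that hypothesis (a) is needed only for the modified-network route to the $P_0$-property --- simply spell out what the paper leaves implicit in its closing sentence.
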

\begin{proof}   
    Let $\mc N$ be a reaction network satisfying \textbf{(N1)}--\textbf{(N4)}, and suppose its \SR\ $\DSR$ satisfies (a)--(c) of \Cref{thm:delay-SR}. Let $\Gtilde$ be the modified network as in \Cref{def:modifiedMAS}, and $\tilde{\DSR}$ be its \SR. By \Cref{thm:SRmodified}, all cycles in $\tilde{\DSR}$ are s-cycles, and $\tilde{\DSR}$ has no S-to-R intersection. By \Cref{cor:SR-cf2006}, its Jacobian matrix $-\tilde{\Jac}$ is a $P_0$-matrix, independent of the choice of rate constant $\vv\kk > \vv 0$ and the positive state. \Cref{prop:modifiedsystem} says that $\tilde{\Jac}$ is the modified Jacobian matrix of the delay mass-action system $\Gkde$. 
    
    It remains to be shown --- independent of $\vv \kk > \vv 0$, $\vv x > \vv 0$ --- that $[\tilde{\Jac}]_{pp} < 0$ for all $1 \leq p \leq n$ and $\det(\Jac) \neq 0$, where $\Jac$ is the Jacobian matrix of the mass-action system $\Gk$. Condition \textbf{(N1)} --- that every species $\cf{X}_i$ has a generalized outflow reaction $a_i\cf{X}_o \to \cf{0}$ (with rate constant $\kk_{i-}$) --- guarantees that $[\tilde{\Jac}]_{pp} = [\Jac]_{pp} < 0$ since 
    \eq{ 
       -[\tilde{\Jac}]_{pp} = -[\Jac]_{pp}
       = -\sum_{r \in \mc R} \kk_{r} \frac{\partial \xx^{\yy_r}}{\partial x_p} (0 - y_{rp})
       \geq \kk_{i-} \frac{\partial x_p^{a_i}}{\partial x_p} a_i > 0.
    }
     Finally, the conditions on the \SR\ of $\mc N$ also ensures that $-\Jac$ itself is a $P_0$-matrix and that $\det(-\Jac) > 0$ by \Cref{thm:SR-cf2006}. Therefore, delay stability of $\mc N$ follows from \Cref{cor:delay-alg}. 
\end{proof}

\begin{rmk}\label{rmk:N1'}
    In a previous remark we claimed that the condition \textbf{(N1)} can be replaced by the less restrictive condition \textbf{(N1')}, and the statement of \Cref{thm:delay-SR} still holds. Let us see why that is the case. Notice above that we have used \textbf{(N1)} in two places in the proof of \Cref{thm:delay-SR}: first to guarantee that  $[\tilde{\Jac}]_{pp} < 0$, and then (implicitly) to guarantee that $\det(-\Jac) > 0$. Indeed, \textbf{(N1')} together with \textbf{(N2)} imply that  $[\tilde{\Jac}]_{pp} < 0$, and also \textbf{(N1')} together with conditions (b) and (c) imply $\det(-\Jac) > 0$ (according to \cite{CraciunFeinberg2005, CraciunFeinberg2006}). 
\end{rmk}

\begin{rmk}
    Conditions (b) and (c) of \Cref{thm:delay-SR} imply that there is at most one positive equilibrium, since these conditions imply that reaction network $\mc N$ is \emph{injective}~\cite{CraciunFeinberg2005}, which in turn rules out multiple positive equilibria. 
\end{rmk}

\begin{rmk}
    \Cref{thm:delay-SR} is applicable to mass-action systems \emph{without delays}, by taking $\vv \tau = \vv 0$. In particular, if the conditions in \Cref{thm:delay-SR} are met, then for any choice of rate constants $\vv\kk > \vv 0$, any positive equilibrium of the mass-action system $\mc N_{\vv\kk}$ is asymptotically stable.
\end{rmk}

Since the proof of \Cref{thm:SRmodified} is very technical and does not shed light on the underlying structure on the \SR, it suffices for now to say that (1) the s-cycles of $\DSR$ and $\tilde{\DSR}$ are related (\Cref{lem:SR-hom-label,prop:SR-hom-s-cycle}); (2) there is a S-to-R intersection in $\tilde{\DSR}$ if and only if either there is one in $\DSR$ or there is a cycle containing a bispecies production edge (\Cref{prop:SR-hom-SRintxn}). So instead, we explore several examples applying \Cref{thm:delay-SR} before proving \Cref{thm:SRmodified}.

%%%%%%%%%%%%%%%%%%%%%%%%%%%%%%%%%%%%%%%%%%%%%%
Several physical and biochemical processes are believed to follow a \emph{nucleation-propagation mechanism}, from crystallization in solution, polymerization reactions including micelles formation~\cite{NuclProp-micelle, NuclProp-polymer}, and DNA double-helix formation~\cite{NuclProp-DNA, NuclProp-DNA2, NuclProp-DNA3}. Under this mechanism, nucleation, whereby the process is initiated, is the rate determining step, followed by the faster propagation step that takes the process to completion.

\begin{ex}
\label{ex:DNA-SR}
We continue with \Cref{ex:DNA,ex:DNA-char}, the formation of double-stranded DNA.
Of course, one can check the criteria of \Cref{cor:delay-alg}, which is not difficult since the modified Jacobian matrix (which also happens to be the Jacobian matrix)
    \eq{ 
        \tilde{\mm J} = \begin{pmatrix}
            -\kk_5 - 4 \kk_1 \cf{S}  & 2\kk_2  \\
        2\kk_1 \cf{S}  & -\kk_2 -\kk_3 
        \end{pmatrix}
    }
is only a $2\times 2$ matrix. However, we look instead at its \SR, shown in \Cref{fig:ex:DNA}. The \SR\ has no cycle, so conditions (a)--(c) of \Cref{thm:delay-SR} are trivially satisfied. Therefore, the network corresponding to duplex formation is delay stable, i.e., asymptotically stable for any choice of rate constants and delay parameters.  
\end{ex}

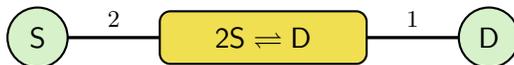
\begin{figure}[h!tbp]
\centering
    \begin{tikzpicture}
    \node at (0,-1) {};
    \node at (0,1) {};
    
    \node[Snode] (x) at (-3,0) {$\cf{S}$};
    \node[Snode] (y) at (3,0) {$\cf{D}$};
    \node[Rnodelong] (r) at (0,0) {$\cf{2S}\RR \cf{D}$};
    \draw [SR] (x.east)--(r.west) node [midway, above] {\footnotesize $2$};
    \draw [SR] (y.west)--(r.east) node [midway, above] {\footnotesize $1$};

    %%%%%%%%% JUST COPY %%%%%%%%%%%%
\end{tikzpicture}
    \caption[\SR\ for the formation of double-stranded DNA via nucleation-propagation mechanism]{The \SR\ of \Cref{ex:DNA} has no cycle. Thus, the toy model for the formation of double-stranded helix via the nucleation-propagation mechanism is delay stable.
    } 
    \label{fig:ex:DNA}
\end{figure}

\begin{rmk}
\Cref{thm:delay-SR} applies whenever the \SR\ of the reaction network contains \emph{no cycle} whatsoever, and the network satisfies \textbf{(N1)}--\textbf{(N4)}. If the \SR\ of the network is a single cycle, then the conditions of \Cref{thm:delay-SR} amount to simply asking that the cycle is an s-cycle and that it does not contain a bispecies production edge. An example of such a class of networks is discussed next.
\end{rmk}

\begin{ex} 
\label{ex:CST}
A  CST network (cyclic sequestration-transmutation network) on $n$ species $\cf{X}_1, \ldots, \cf{X}_n$ has $n$ reactions $\cf{R}_1,\ldots, \cf{R}_n$, where  each $\cf{R}_i$ is either 
    \begin{align}\label{eq:CSTnet}
    &a_i \cf{X}_i + b_{i+1} \cf{X}_{i+1} \to \cf{0} \text{ (sequestration) \quad or}\\
    &a_i \cf{X}_i \to b_{i+1} \cf{X}_{i+1} \text{ (transmutation)}\nonumber
    \end{align}
with $\cf{X}_{n+1}=\cf{X}_1$.  

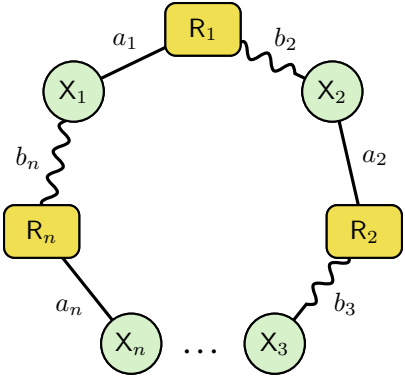
\begin{figure}
\begin{center}
\begin{tikzpicture}[scale=1.1,font=\small,reaction/.style={rectangle, minimum size=5mm, inner sep=2pt,  draw=red!50!black!50, top color=red!20!black!10, bottom color=red!50!black!20,font=\small}, species/.style={rectangle,draw=white!5,fill=white!9,thick,inner sep=0pt,minimum size=3mm, font=\small}, arc/.style={shorten <=2pt,shorten >=2pt, >=stealth',semithick}]
 \node[Snode, outer sep=-2pt] (X1) at (141.4:2) {$X_1$};
 \node[Snode, outer sep=-2pt] (X2) at (38.6:2) {$X_2$};
 \node[Snode, outer sep=-2pt] (X3) at (-64.3:2) {$X_3$};
 \node[Snode, outer sep=-2pt] (Xn) at (244.3:2) {$X_n$};
 \node[Rnode, outer sep=-2pt] (1) at (90:2) {$R_1$}
 edge[arc, -, very thick] node[xshift=-3pt, yshift=8pt]{$a_1$} (X1);
 \draw[decorate, decoration={coil,aspect=0}, very thick] (1)--(X2) node[midway, xshift=5pt, yshift=9pt] {$b_2$};
\node[Rnode, outer sep=-2pt] (n) at (192.85:2) {$R_n$}
edge[arc, -, very thick] node[midway, below left,xshift=3pt]{$a_n$} (Xn);
\draw[decorate, decoration={coil,aspect=0}, very thick] (n)--(X1) node[midway, xshift=-11pt, yshift=2pt] {$b_n$};
\node[Rnode, outer sep=-2pt] (2) at (-12.85:2) {$R_2$}
edge[arc, -, very thick] node[xshift=10pt, yshift=2pt]{$a_2$} (X2);
\draw[decorate, decoration={coil,aspect=0}, very thick] (2)--(X3)
node[midway, xshift=10pt, yshift=-6pt] {$b_3$};
\node at (-90:1.9) {$\boldsymbol{\cdots}$};
 \node[Snode] (X1) at (141.4:2) {$\cf{X}_1$};
 \node[Snode] (X2) at (38.6:2) {$\cf{X}_2$};
 \node[Snode] (X3) at (-64.3:2) {$\cf{X}_3$};
 \node[Snode] (Xn) at (244.3:2) {$\cf{X}_n$};
 \node[Rnode] (1) at (90:2) {$\cf{R}_1$};
 \node[Rnode] (2) at (-12.85:2) {$\cf{R}_2$};
 \node[Rnode] (n) at (192.85:2) {$\cf{R}_n$};
\end{tikzpicture}
\caption{DSR graph of the general CST network.}
\label{fig:cstdsr}
\end{center}
\end{figure}

The DSR graph of a CST network is characterized by the single-cycle structure given in \Cref{fig:cstdsr}. Here the wavy edge adjacent to reaction $\cf{R}_i$ is either a directed arrow from $\cf{R}_i$ to $\cf{X}_{i+1}$ if $\cf{R}_i$ is a transmutation, or an undirected edge which is part of a c-pair if $\cf{R}_i$ is a sequestration. These types of cycles are common in DSR graphs, which makes CST networks useful as ``motif'' whose dynamical properties may be inherited by the large network.  For example, \cite{CST} involves inheriting multistationarity.  

A \emph{fully open CST} is a CST network together with inflow and outflow reactions for all species. Fully open CST networks satisfy conditions \textbf{(N1)}--\textbf{(N4)}, and therefore \Cref{thm:delay-SR} implies that if $a_1\cdots a_n=b_1\cdots b_n$ (i.e., the cycle is an s-cyle), then the fully open CST network is delay stable.

We note that by replacing condition \textbf{(N1)}  (in an equivalent way, as explained in \Cref{rmk:N1'}) with \textbf{(N1')}, a more powerful conclusion can be drawn: it is enough to require that at least one species (instead of all) is in the outflow to conclude delay stability. Indeed, if we assume without loss of generality that $\cf{X}_n$ has an outflow reaction, then by replacing $\cf{R}_n$ in (\ref{eq:CSTnet}) by $\cf{X}_n \to \cf{0}$  we obtain the reactions $\yy_1\to \yy'_1$, $\ldots$, $\yy_n\to \yy'_n$ satisfying
    \eq{
        \det\left( \vphantom{\sum} \yy_1, \cdots, \yy_n\right)
        \det\left( \vphantom{\sum} \yy_1-\yy'_1,\cdots, \yy'_n-\yy_n \right)
        =(a_1a_2\cdots a_{n-1})^2 > 0.
    }
Therefore condition \textbf{(N1')} is satisfied, and the network is delay stable.
\end{ex}

We now return to a discussion leading up to a proof of \Cref{thm:SRmodified}. One can imagine if the reaction network $\mc N$ has only one bispecies reaction, then $\mc N$ differs from its modified network $\Gtilde$ only by the relevant reaction (two reactions from the view of $\Gtilde$), and their respective \SR s differ only near the relevant R-node(s). Indeed, the Jacobian matrix $\Jac$ and the modified Jacobian matrix $\tilde{\Jac}$ are different only when there is an bispecies reaction.

\begin{figure}[h!tbp]
\centering
\begin{subfigure}[b]{0.37\textwidth}
    \centering 
    \begin{tikzpicture}
    \node[Snode] (x) at (0,1.25) {$\cf{X}$};
    \node[Snode] (y) at (0,-1.25) {$\cf{Y}$};
    \node[Snode] (z) at (5,0) {$\cf{Z}$};
    \node[Rnodelong] (r1) at (2.35,0) {$\cf{2X}+\cf{Y} \to \cf{Z}$};
    %%
    %% X + Y \to 2Z
    \draw [SR, highlightorange] (x.south east)--([srNW] r1.north west) node [midway, above right] {\footnotesize $2$};
    \draw [SR, highlightblue] (y.north east)--([srSW] r1.south west) node [midway, below right] {\footnotesize $1$};
    \draw [DSR] (r1.east)--(z.west) node [midway, above] {\footnotesize $1$};

    %%%%%%%%% JUST COPY %%%%%%%%%%%%
    \node[Snode] (x) at (0,1.25) {$\cf{X}$};
    \node[Snode] (y) at (0,-1.25) {$\cf{Y}$};
    \node[Snode] (z) at (5,0) {$\cf{Z}$};
    \node[Rnodelong] (r1) at (2.35,0) {$\cf{2X}+\cf{Y} \to \cf{Z}$};
\end{tikzpicture}
    \caption{}
    \label{fig:SRmodified-local-a}
\end{subfigure}\hspace{0.1\textwidth}%%%%%%%%%% 
\begin{subfigure}[b]{0.37\textwidth}
    \centering 
    \begin{tikzpicture}
    \node[Snode] (x) at (0,1.25) {$\cf{X}$};
    \node[Snode] (y) at (0,-1.25) {$\cf{Y}$};
    \node[Snode] (z) at (5,0) {$\cf{Z}$};
    \node[Rnodelong] (r1) at (2.35,1.25) {$\cf{2X} \to \cf{Z}+\cf{Y}$};
    \node[Rnodelong] (r2) at (2.35,-1.25) {$\cf{Y} \to \cf{Z}+\cf{2X}$};
    %%
    %% X + Y \to 2Z
    \draw [SR, highlightorange] (x.east)--(r1.west) node [midway, above] {\footnotesize $2$};
    \draw [SR, highlightblue] (y.east)--(r2.west) node [midway, below] {\footnotesize $1$};
    
    \draw [DSR, highlightblue] ([srSW] r1.south west)--(y.north east) node [near start, below right] {\footnotesize \!\!$1$};
    \draw [DSR, highlightorange] ([srNW] r2.west)--(x.south east) node [near start, above right] {\footnotesize \!\!$2$};

    \draw [DSR] (r1.east)--(z.north west) node [midway, above right] {\footnotesize $1$};
    \draw [DSR] (r2.east)--(z.south west) node [midway, below right] {\footnotesize $1$};

    %%%%%%%%% JUST COPY %%%%%%%%%%%%
    \node[Snode] (x) at (0,1.25) {$\cf{X}$};
    \node[Snode] (y) at (0,-1.25) {$\cf{Y}$};
    \node[Snode] (z) at (5,0) {$\cf{Z}$};
    \node[Rnodelong] (r1) at (2.35,1.25) {$\cf{2X} \to \cf{Z}+\cf{Y}$};
    \node[Rnodelong] (r2) at (2.35,-1.25) {$\cf{Y} \to \cf{Z}+\cf{2X}$};
\end{tikzpicture}
    \caption{}
    \label{fig:SRmodified-local-b}
\end{subfigure}
    \caption{The \SR s from \Cref{ex:SRmodified-local}. (a) The original network $\{\cf{2X}+\cf{Y}\to\cf{Z}\}$ and (b) its modified network.}
    \label{fig:SRmodified-local}
\end{figure}

\begin{ex}
\label{ex:SRmodified-local}
Consider the reaction network consisting of a single reaction
    \tikzinline{1}{
        \node (xy) at (0,0) [left] {$\cf{2X}+\cf{Y}$};
        \node (z) at (1.5,0) [right] {$\cf{Z}$};
        \draw [fwdrxn] (xy) -- (z) node [above, midway] {\ratecnst{\vphantom{$\kk$}}};
    }
whose \SR\ $\DSR$ is shown in \Cref{fig:SRmodified-local-a}. Independent of the choice of rate constants and positive state, the modified network consisting of two reactions 
    \tikzinline{1}{
        \node (xy) at (0,0) [left] {$\cf{2X}$};
        \node (z) at (1.5,0) [right] {$\cf{Z}+\cf{Y}$};
        \draw [fwdrxn] (xy) -- (z) node [above, midway] {\ratecnst{\vphantom{$\kk$}}};
        \node (xy) at (0,-1) [left] {$\cf{Y}$};
        \node (z) at (1.5,-1) [right] {$\cf{Z}+\cf{2X}$};
        \draw [fwdrxn] (xy) -- (z);
    }
has the \SR\ $\tilde{\DSR}$ in \Cref{fig:SRmodified-local-b}. The edges are coloured in the figure to show correspondence with those in \Cref{fig:SRmodified-local-a}. Each R-node in $\tilde{\DSR}$ has degree $3$, and its adjacent edges have the same set of stoichiometric coefficients, which also happens to be the stoichiometric coefficient in \Cref{fig:SRmodified-local-a}. A s-cycle is present in $\tilde{\DSR}$ even though $\DSR$ has no cycle. 
    
On the \SR s in \Cref{fig:SRmodified-local}, we can define a graph homomorphism $\Phi \colon \tilde{\DSR} \to \DSR$. It acts as the identity on the S-nodes, and maps the two R-nodes in $\tilde{\DSR}$ to the R-node in $\DSR$. So $\Phi$ maps any edge of $\tilde{\DSR}$ correspondingly (edges colour-coded as such in \Cref{fig:SRmodified-local}). Note that the stoichiometric coefficient of any edge is preserved by $\Phi$. Finally, note that a directed edge in $\tilde{\DSR}$ may become undirected in $\DSR$ under the graph homomorphism $\Phi$. 
\end{ex}

Regarding the \SR s of a reaction network and its modified network, it is not difficult to imagine all the actions happening around R-nodes in $\DSR$ involving two reactant species. We call such a node a \df{bispecies R-nodes}.%

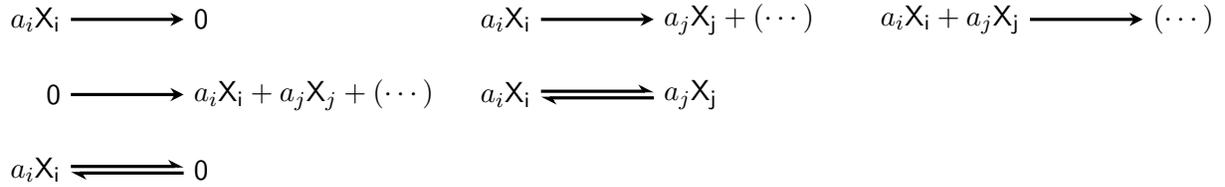
\begin{figure}[h!tbp] 
\centering 
    \begin{tikzpicture}
    \node at (0,0.75) {};
    \node at (0,-2.5) {};
\begin{scope}[transform canvas={xshift=-1.cm}]
\begin{scope}[transform canvas={xshift=-6.25cm}]
    \node (x) at (0,0) [left] {$a_i{\cf{X_i}}$};
    \node (y) at (1.5,0) [right] {$\cf{0}$};
    \draw [fwdrxn] (x) -- (y) ;
    \node (x) at (1.5,-1) [right] {$a_i{\cf{X_i}} + a_j\cf{X}_j + (\cdots)$};
    \node (y) at (0,-1) [left] {$\cf{0}$};
    \draw [fwdrxn] (y) -- (x) ;
    \node (x) at (0,-2) [left] {$a_i{\cf{X_i}}$};
    \node (y) at (1.5,-2) [right] {$\cf{0}$};
    \draw [revrxn, revrxnN] (x) -- (y) ;
    \draw [revrxn, revrxnS] (y) -- (x) ;
\end{scope}
\begin{scope}
    \node (x) at (0,0) [left] {$a_i{\cf{X_i}}$};
    \node (y) at (1.5,0) [right] {$a_j{\cf{X_j}} + \cf{(\cdots)}$};
    \draw [fwdrxn] (x) -- (y) ;
    \node (x) at (0,-1) [left] {$a_i{\cf{X_i}}$};
    \node (y) at (1.5,-1) [right] {$a_j{\cf{X_j}}$};
    \draw [revrxn, revrxnN] (x) -- (y) ;
    \draw [revrxn, revrxnS] (y) -- (x) ;
\end{scope}
\begin{scope}[transform canvas={xshift=6.5cm}]
    \node (x) at (0,0) [left] {$a_i{\cf{X_i}}+a_j{\cf{X_j}}$};
    \node (y) at (1.5,0) [right] {$\cf{(\cdots)}$};
    \draw [fwdrxn] (x) -- (y) ;
\end{scope}
\end{scope}
\end{tikzpicture}
    \caption[Admissible reactions for \Cref{thm:SRmodified,thm:delay-SR}]{A reaction network satisfying conditions \textbf{(N2)}--\textbf{(N4)} can only admit reactions of these form. In each case, $i \neq j$ and $a_i$, $a_j > 0$. By $\cf{(\cdots)}$, we allow any combination of species except for $\cf{X}_i$ and $\cf{X}_j$.}
    \label{fig:SR-rxn-type}
\end{figure}

In what follows, we start with a reaction network satisfying conditions \textbf{(N2)}--\textbf{(N4)}. The only reactions allowed are those of the forms in \Cref{fig:SR-rxn-type}. By definition, the three left-most reactions in \Cref{fig:SR-rxn-type} are omitted from $V_R$ in the \SR. As a result, $V_R$ only contain R-nodes corresponding to reactions in the latter two columns in \Cref{fig:SR-rxn-type}.

\begin{rmk}
\label{rmk:fringe}
In how we defined the modified network in \Cref{def:modifiedMAS}, an awkward scenario can occur. If we start with the reaction network $\mc N$
    \tikzinline{1}{
        \node (xy) at (0,0) [left] {$\cf{X}+\cf{Y}$};
        \node (0) at (1.5,0) [right] {$\cf{0}$};
        \node (x) at (4,0) [left] {$\cf{X}$};
        \node (y) at (5.5,0) [right] {$\cf{Y}$};
        \draw [fwdrxn] (xy) -- (0) node [above] {\ratecnst{$\vphantom{\kk_1}$}};
        \draw [fwdrxn] (x) -- (y) ;
    }
then the modified network $\Gtilde$ consisting of  
    \tikzinline{1}{
        \node (xy) at (0,0.35) [left] {$\cf{X}$};
        \node (0) at (1.5,0.35) [right] {$\cf{Y}$};
        \node (a) at (0,-0.35) [left] {$\cf{Y}$};
        \node (b) at (1.5,-0.35) [right] {$\cf{X}$};
        \node (x) at (4,0) [left] {$\cf{X}$};
        \node (y) at (5.5,0) [right] {$\cf{Y}$};
        \draw [fwdrxn] (xy) -- (0)  node [above] {\ratecnst{$\vphantom{\kk_1}$}};
        \draw [fwdrxn] (x) -- (y);
        \draw [fwdrxn] (a) -- (b) ;
    }
awkwardly has a reaction repeated! According to definition, the set of R-nodes $V_R$ will only have one copy of the reaction $\cf{X} \to \cf{Y}$; however, for the rest of this section, \emph{we ensure that both R-nodes are included in the \SR}. See \Cref{fig:fringe} for the \SR s of $\mc N$ and $\Gtilde$. This is to ensure that cycles are not lost in going from the modified \SR\ to the original \SR\ under $\Phi$. 
\end{rmk}

\begin{rmk}
    It is worth remarking that in cases of repeated reactions in the modified network, one can check whether or not the modified Jacobian matrix $\tilde{\Jac}$ is the negative of a $P_0$-matrix using \Cref{cor:SR-cf2006}. The \SR\ where repeated reactions are not kept distinct is necessarily simpler. In the example of the above remark, the \SR\ consists of two S-nodes and one R-node (for the reaction $\cf{X} \RR \cf{Y}$), which has no cycle at all. So by \Cref{cor:SR-cf2006}, $-\tilde{\Jac}$ is a $P_0$-matrix and $\det(-\tilde{\Jac}) > 0$ for any vector of rate constants $\vv\kk > \vv 0$ and at any state $\xx > \vv 0$. 
\end{rmk}

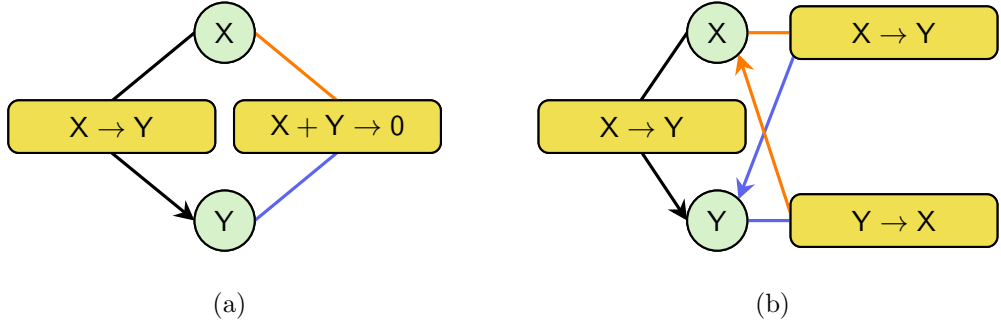
\begin{figure}[h!tbp]
\centering
\vspace{0.25cm}
\begin{subfigure}[b]{0.375\textwidth}
    \centering 
    \begin{tikzpicture}
\node at (0,-1.75) {};
    \node[Snode] (x) at (0,1.25) {$\cf{X}$};
    \node[Snode] (y) at (0,-1.25) {$\cf{Y}$};

    \node[Rnodelong] (r1) at (1.5,0) {$\cf{X}+\cf{Y} \to \cf{0}$};
    \node[Rnodelong] (r2) at (-1.5,0) {$\cf{X} \to \cf{Y}$};
    %%
    %% X + Y \to 2Z
    \draw [SR, highlightorange] (x.east)--([srNW] r1.north) ;
    \draw [SR, highlightblue] (y.east)--([srSW] r1.south) ;
    \draw [SR] (x.west)--([srNE] r2.north) ;
    \draw [DSR] (r2.south)--(y.west) ;

    %%%%%%%%% JUST COPY %%%%%%%%%%%%
    \node[Snode] (x) at (0,1.25) {$\cf{X}$};
    \node[Snode] (y) at (0,-1.25) {$\cf{Y}$};
    \node[Rnodelong] (r1) at (1.5,0) {$\cf{X}+\cf{Y} \to \cf{0}$};
    \node[Rnodelong] (r2) at (-1.5,0) {$\cf{X} \to \cf{Y}$};
\end{tikzpicture}
    \caption{}
    \label{fig:fringe-a}
\end{subfigure}\hspace{0.05\textwidth}%%%%%%%%%% 
\begin{subfigure}[b]{0.4\textwidth}
    \centering 
    \begin{tikzpicture}
\node at (0,-1.75) {};
    \node[Snode] (x) at (0,1.25) {$\cf{X}$};
    \node[Snode] (y) at (0,-1.25) {$\cf{Y}$};

    \node[Rnodelong] (r1a) at (2.35,1.25) {$\cf{X}\to\cf{Y}$};
    \node[Rnodelong] (r1b) at (2.35,-1.25) {$\cf{Y}\to\cf{X}$};
    \node[Rnodelong] (r2) at (-1,0) {$\cf{X} \to \cf{Y}$};
    %%
    %% X + Y \to 2Z
    \draw [SR, highlightorange] (x.east)--(r1a.west) ;
    \draw [SR, highlightblue] (y.east)--(r1b.west) ;
    \draw [SR] (x.west)--([srNE] r2.north) ;
    \draw [DSR] (r2.south)--(y.west) ;

    \draw [DSR, highlightblue] ([srSW] r1a.south west)--(y.north east) ;
    \draw [DSR, highlightorange] ([srNW] r1b.west)--(x.south east) ;
    
    %%%%%%%%% JUST COPY %%%%%%%%%%%%
    \node[Snode] (x) at (0,1.25) {$\cf{X}$};
    \node[Snode] (y) at (0,-1.25) {$\cf{Y}$};
   \node[Rnodelong] (r1a) at (2.35,1.25) {$\cf{X}\to\cf{Y}$};
    \node[Rnodelong] (r1b) at (2.35,-1.25) {$\cf{Y}\to\cf{X}$};
    \node[Rnodelong] (r2) at (-1,0) {$\cf{X} \to \cf{Y}$};
\end{tikzpicture}
    \caption{}
    \label{fig:fringe-b}
\end{subfigure}
    \caption[\SR s of original and modified networks (a warning)]{The \SR s of (a) the original network and (b) the modified network from \Cref{rmk:fringe}. Duplicated R-nodes \emph{can} arise but should be kept for purpose of proof in \Cref{thm:SRmodified}.}
    \label{fig:fringe}
\end{figure}

We now define a map $\Phi$ from the vertices of the modified \SR\ to those of the original, smaller \SR, and we show that this map induces a surjective graph homomorphism between the \SR s. 

\begin{defn}
\label{def:SR-hom}
    Let $\DSR = (V_S, V_R, E, \sigma)$ be the \SR\ of a reaction network satisfying \textbf{(N2)}--\textbf{(N4)}, and $\tilde{\DSR} = (V_S, \tilde{V}_R, \tilde{E}, \tilde{\sigma})$ the \SR\ of the modified network as constructed in \Cref{sec:delay-modified}. Define a map $\Phi \colon V_S \cup \tilde{V}_R \to V_S \cup V_R$ as follows.
    \begin{itemize}
        \item Let $\Phi(\cf{X}) = \cf{X}$ for any $\cf{X} \in V_S$. 
        \item For any $\tilde{\cf{R}} \in \tilde{V}_R$ that does \emph{not} come from a bispecies reaction, there is a unique $\cf{R} \in V_R$ associated to it by the construction in \Cref{def:modifiedMAS}; let $\Phi(\tilde{\cf{R}}) = \cf{R}$.
        \item Any bispecies R-node $\cf{R} \in V_R$ is naturally associated to two R-nodes $\tilde{\cf{R}}_1$, $\tilde{\cf{R}}_2 \in \tilde{V}_R$, so let $\Phi(\tilde{\cf{R}}_1) = \Phi(\tilde{\cf{R}}_2) = \cf{R}$.
    \end{itemize} 
\end{defn}

In general, a \emph{homomorphism $F$ of directed graphs $G$ and $H$} is a map defined on the vertices $V_G \to V_H$ that preserves edges (adjacency and orientation), i.e., if $(u,v) \in E_G$ is a directed edge, then the directed edge $(F(u), F(v))$ lies in $E_H$~\cite{GraphHom}. Since some edges in \SR\ are drawn without a direction (i.e., it is compatible with both orientations), in order for $\Phi$ to be a graph homomorphism, we require that the image of an edge to admit the same orientation as the input. In other words, the image of a directed edge under $\Phi$ is either a directed edge with compatible orientation, or is an undirected edge.

Moreover, graph homomorphisms $F$ is said to be \emph{vertex-surjective} if the map $F \colon V_G \to V_H$ is surjective; it is said to be \emph{edge-surjective} if the map $F \colon E_G \to E_H$ is surjective. If $F$ is both vertex-surjective and edge-surjective, then $F$ is said to be a \emph{surjective} graph homomorphism~\cite{GraphHom}.   

\begin{prop}
\label{prop:phi-hom}
    The map $\Phi$ induces a surjective graph homomorphism from $\tilde{\DSR}$ to $\DSR$. 
\end{prop}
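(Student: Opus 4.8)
The statement has three parts to verify: that $\Phi$ is a well-defined graph homomorphism (preserves adjacency and is orientation-compatible in the sense spelled out after \Cref{def:SR-hom}), that it is vertex-surjective, and that it is edge-surjective. Vertex-surjectivity is essentially immediate from the definition, so the work is in checking the edge conditions. The natural strategy is to reduce everything to a \emph{local} analysis at a single R-node, since $\Phi$ is the identity on S-nodes and acts separately on each R-node (and the edges incident to an R-node are determined entirely by its reaction). By condition \textbf{(N2)}--\textbf{(N4)} the only reactions that contribute R-nodes are those in the last two columns of \Cref{fig:SR-rxn-type}, so I only need to handle two cases: a monospecies reaction $a_i\cf{X}_i \to \cf{(\cdots)}$ and a bispecies reaction $a_i\cf{X}_i + a_j\cf{X}_j \to \cf{(\cdots)}$.

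\textbf{First} I would dispose of the monospecies case. Here the construction in \Cref{def:modifiedMAS}(a) leaves the reaction unchanged, so the corresponding $\tilde{\cf{R}}$ has exactly the same incident edges, orientations, and stoichiometric coefficients as $\cf{R}$; $\Phi$ restricted to the star of $\tilde{\cf{R}}$ is a bijection onto the star of $\cf{R}$, trivially a homomorphism and edge-bijective there. \textbf{Second}, and this is the crux, I would analyze a bispecies R-node. The reaction $a_i\cf{X}_i + a_j\cf{X}_j \to \yy'$ splits into $\tilde{\cf{R}}_1 : a_i\cf{X}_i \to \yy' + a_j\cf{X}_j$ and $\tilde{\cf{R}}_2 : a_j\cf{X}_j \to \yy' + a_i\cf{X}_i$, exactly as in \Cref{ex:SRmodified-local}. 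I would tabulate the incident edges of $\cf{R}$ (one undirected $\cf{X}_i$-edge and one undirected $\cf{X}_j$-edge forming the c-pair, together with directed edges into each product species) and of $\tilde{\cf{R}}_1,\tilde{\cf{R}}_2$, and check edge by edge that $\Phi$ sends each edge of the modified star to an incident edge of $\cf{R}$ while respecting the orientation rule. The key points to record are: (1) every edge of $\tilde{\DSR}$ maps to an \emph{incident} edge of $\cf{R}$, so adjacency is preserved; (2) the reactant edge $(\cf{X}_i,\tilde{\cf{R}}_1)$ is undirected in $\tilde{\DSR}$ and maps to the undirected reactant edge $(\cf{X}_i,\cf{R})$ — consistent; (3) the new edges arising from migrated reactants, e.g. the directed product edge $(\cf{X}_j,\tilde{\cf{R}}_1)$, map onto the undirected reactant edge $(\cf{X}_j,\cf{R})$, which is permitted precisely because the orientation rule allows a directed edge to map to an undirected one (the phenomenon flagged at the end of \Cref{ex:SRmodified-local}); and (4) stoichiometric coefficients are preserved throughout, as was observed for the degree-$3$ R-nodes in \Cref{ex:SRmodified-local}. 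Edge-surjectivity onto the star of $\cf{R}$ then follows since both the $\cf{X}_i$- and $\cf{X}_j$-reactant edges of $\cf{R}$ are hit (by $\tilde{\cf{R}}_1$ and $\tilde{\cf{R}}_2$ respectively, as product edges), and each product edge of $\cf{R}$ is hit by both $\tilde{\cf{R}}_1$ and $\tilde{\cf{R}}_2$.

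\textbf{Finally} I would assemble the local statements into the global one. Vertex-surjectivity holds because every S-node is fixed and every R-node $\cf{R}\in V_R$ has a preimage (itself in the monospecies case, or either $\tilde{\cf{R}}_1,\tilde{\cf{R}}_2$ in the bispecies case), using \Cref{rmk:fringe} to guarantee that duplicated R-nodes are retained so no preimage is accidentally discarded. Edge-surjectivity holds because $E = \bigcup_{\cf{R}\in V_R}\mathrm{star}(\cf{R})$ and each star is covered by the local analysis; the homomorphism property holds because it is a local condition at each edge. \textbf{The main obstacle} I anticipate is purely bookkeeping rather than conceptual: being careful that the orientation-compatibility convention (directed-maps-to-directed-or-undirected, never the reverse) is respected in case (3), where a directed production edge in $\tilde{\DSR}$ collapses onto an undirected reactant edge in $\DSR$; one must confirm this is the \emph{only} orientation mismatch that occurs and that it always goes in the allowed direction. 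A secondary subtlety is handling the degenerate/fringe configurations of \Cref{rmk:fringe}, where the modified network produces repeated reactions — here I rely explicitly on the convention stated there that both R-nodes are kept distinct in $\tilde{\DSR}$, so that $\Phi$ remains well-defined and surjectivity is unaffected.
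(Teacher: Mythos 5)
Your proposal is correct and takes essentially the same route as the paper's proof: a case split between non-bispecies and bispecies R-nodes, the observation that the only orientation change under $\Phi$ is a directed product edge collapsing onto an undirected reactant edge (which the stated convention permits), vertex-surjectivity from the copied/split reactions, and edge-surjectivity from the fact that every species incident to a bispecies R-node appears as a product in one of the two split reactions $\tilde{\cf{R}}_1$, $\tilde{\cf{R}}_2$. One immaterial slip: in your edge-surjectivity step the \lq\lq respectively\rq\rq\ is swapped --- the $\cf{X}_i$-reactant edge of $\cf{R}$ is hit as a product edge by $\tilde{\cf{R}}_2$ and the $\cf{X}_j$-edge by $\tilde{\cf{R}}_1$ --- though this does not affect the argument, since those edges are in any case also hit by the undirected reactant edges of $\tilde{\cf{R}}_1$ and $\tilde{\cf{R}}_2$.
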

\begin{proof}
First $\Phi$ induces a map on the edges of the \SR\ in the obvious way: if $(\cf{X}, \tilde{\cf{R}})$ is an edge in $\tilde{E}$, then we let its image be $(\Phi(\cf{X}), \Phi(\tilde{\cf{R}})) = (\cf{X}, \Phi(\tilde{\cf{R}}))$. We need to show that $\Phi$ is well-defined and preserves both adjacency and orientation. 

There are two types of edges to consider. First, consider an edge $(\cf{X}, \tilde{\cf{R}}) \in \tilde{E}$ where $\Phi(\tilde{\cf{R}})$ is \emph{not} a bispecies R-node. The map $\Phi$ acts as the identity on these vertices. If the edge is undirected, either the reaction $\Phi(\tilde{\cf{R}})$ is reversible, or $\cf{X}$ is a reactant species in the irreversible reaction $\Phi(\tilde{R})$. Thus $(\cf{X}, \Phi(\tilde{\cf{R}})) \in E$ is undirected. If the edge is directed, necessarily the reaction is irreversible and $\cf{X}$ is a product species; hence the resulting edge is also directed. 

Now consider an edge $(\cf{X}, \tilde{\cf{R}}) \in \tilde{E}$ where $\Phi(\tilde{\cf{R}})$ is a bispecies R-node. If the edge is undirected, then $\cf{X}$ is a reactant species in the bispecies reaction. Hence $(\cf{X}, \Phi(\tilde{\cf{R}})) \in E$ is also undirected. However, if the edge in $\tilde{\DSR}$ is directed, the species $\cf{X}$ is either a product in the bispecies reaction (in which case $(\cf{X}, \Phi(\tilde{\cf{R}}))$ is also directed), or $\cf{X}$ is a reactant in the bispecies reaction (in which case $(\cf{X}, \Phi(\tilde{\cf{R}}))$ is undirected). Regardless, the directions of the edges are consistent under $\Phi$. Therefore, $\Phi$ is a graph homomorphism. By an abuse of notation, we let $\Phi \colon \tilde{\DSR} \to \DSR$, and allow $\Phi$ to act on vertices as well as edges of $\tilde{\DSR}$.

Lastly, we prove that $\Phi$ is both vertex-surjective and edge-surjective. Clearly every S-node in $\DSR$ is covered by the image of $\Phi$. Every R-node in $\DSR$ is representative of a reaction in the original network, which either is copied to or gives rise to two R-nodes in the modified network by \Cref{def:modifiedMAS}. This relationship between reactions is captured by $\Phi$ between the respective R-nodes in the \SR. In particular, $\Phi$ is vertex-surjective.  

Let $(\cf{X}, \cf{R})$ be any edge in $\DSR$. If $\cf{R}$ is a bispecies reaction, then in the modified network, there exists a reaction $\tilde{\cf{R}}$ for which $\cf{X}$ is a product species, i.e., $(\cf{X}, \Phi(\tilde{\cf{R}})) = (\cf{X}, \cf{R})$.  If $\cf{R}$ is \emph{not} a bispecies reaction, then there is a reaction $\tilde{R}$ that is an exact copy in the modified graph, so $(\cf{X}, \Phi(\tilde{\cf{R}})) = (\cf{X}, \cf{R})$. Thus, $\Phi$ is edge-surjective, and we conclude that $\Phi$ is a surjective graph homomorphism. 
\end{proof}

While $\Phi$ is surjective, in general it is not injective. However, in \Cref{lem:SR-hom-cycle} we show that $\Phi$, when restricted to a cycle $\tilde{C}$, is both vertex-injective and edge-injective whenever the image is a cycle. In such cases, we say that \df{$\Phi$ acts injectively on $\tilde{C}$}. For now, we start with the observation that $\Phi$ preserves the stoichiometric coefficients in a \SR.

\begin{lem}
\label{lem:SR-hom-label}
    The map $\Phi$ preserves stoichiometric coefficients, i.e., $\sigma \circ \Phi = \tilde{\sigma}$. 
\end{lem}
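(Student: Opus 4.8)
The plan is to verify the identity $\sigma \circ \Phi = \tilde\sigma$ one edge at a time, splitting into the two cases that mirror the construction in \Cref{def:modifiedMAS}. Fix an edge $\tilde e = (\cf{X}, \tilde{\cf{R}}) \in \tilde E$ and set $\cf{R} = \Phi(\tilde{\cf{R}})$; the goal is to show that the stoichiometric coefficient $\tilde\sigma(\tilde e)$ of $\cf{X}$ in the reaction $\tilde{\cf{R}}$ equals the coefficient $\sigma((\cf{X},\cf{R}))$ of $\cf{X}$ in $\cf{R}$. Since $\Phi$ fixes every S-node, this reduces to tracking how the coefficient of a single species behaves under the modification procedure, independently of the edge's orientation.

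First I would dispatch the easy case, where $\tilde{\cf{R}}$ does not arise from a bispecies reaction. By part (a) of \Cref{def:modifiedMAS}, such an $\tilde{\cf{R}}$ is literally the same reaction as $\cf{R}$ (the reactant complex satisfies $|\supp(\yy)| \le 1$ and the reaction is copied verbatim), so $\cf{X}$ appears with the same coefficient on both sides, and $\tilde\sigma(\tilde e) = \sigma(\Phi(\tilde e))$ is immediate.

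The substantive case is when $\cf{R}$ is a bispecies R-node. Under \textbf{(N2)}--\textbf{(N4)} I can write the reaction as $a_i \cf{X}_i + a_j \cf{X}_j \to \yy'$ with $\yy = a_i \ee_i + a_j \ee_j$ and $\supp(\yy') \cap \{i,j\} = \emptyset$ by the no-one-step-catalysis hypothesis. By part (b) of \Cref{def:modifiedMAS}, $\tilde{\cf{R}}$ is one of the two reactions $a_i \ee_i \to \yy' + a_j \ee_j$ or $a_j \ee_j \to \yy' + a_i \ee_i$ (here $\yy' + \yy - y_i\ee_i = \yy' + a_j\ee_j$). Taking the first without loss of generality, its incident S-nodes are $\cf{X}_i$ as a reactant with coefficient $a_i$, $\cf{X}_j$ as a \emph{product} with coefficient $a_j$, and each $\cf{X}_k \in \supp(\yy')$ as a product with coefficient $y'_k$. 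Comparing against $\cf{R}$, whose incident edges carry coefficients $a_i$ for $\cf{X}_i$, $a_j$ for $\cf{X}_j$, and $y'_k$ for $\cf{X}_k \in \supp(\yy')$, the coefficients agree in every case. The point I want to emphasize is that migrating the reactant $\cf{X}_j$ to the product side changes only the direction of its edge, never the underlying coefficient $a_j$.

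The only obstacle is bookkeeping rather than anything conceptual: I must confirm that the product complex $\yy' + \yy - y_i \ee_i$ contributes $\cf{X}_j$ with exactly the original coefficient $a_j$ and each $\cf{X}_k$ with exactly $y'_k$, which follows from the disjointness of $\supp(\yy)$ and $\supp(\yy')$ guaranteed by \textbf{(N2)}. Since $\sigma$ and $\tilde\sigma$ record coefficients as positive reals, independent of edge orientation, the change of role from reactant to product --- whose effect on directions \Cref{prop:phi-hom} has already accounted for --- is irrelevant here, and $\sigma \circ \Phi = \tilde\sigma$ follows.
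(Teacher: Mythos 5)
Your proof is correct and takes essentially the same approach as the paper, whose entire argument is the one-line observation that the construction in \Cref{def:modifiedMAS} always moves reactant species together with their stoichiometric coefficients. Your case analysis (verbatim copy for $|\supp(\yy)|\leq 1$, coefficient-preserving migration for bispecies reactions, with \textbf{(N2)} ensuring no coefficient collision in the product complex) simply makes explicit the bookkeeping the paper leaves implicit.
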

\begin{proof}
This follows from the construction in \Cref{def:modifiedMAS}; reactant species are always moved together with their stoichiometric coefficients.   
\end{proof}

In what follows, we frequently refer to the image of a subgraph under $\Phi$. If $\tilde{C}$ is a cycle in $\tilde{\DSR}$, we let $\Phi(\tilde{C})$ denote the subgraph in $\DSR$. More precisely, the set of vertices of $\Phi(\tilde{C})$ is $\{v \in V_S \cup V_R \colon \text{there is a vertex } \tilde{v} \text{ in } \tilde{C} \text{ such that } \Phi(\tilde{v}) = v  \}$. Similarly, the set of edges in $\Phi(\tilde{C})$ is the set of which there is an edge (with the appropriate orienation) in $\tilde{C}$ that gets mapped to it.

\begin{lem}
\label{rmk:cpair-scycle}
    Let $\tilde{C} \subseteq \tilde{\DSR}$ be a cycle that gets mapped to a c-pair under $\Phi$. The edges adjacent to a S-node in $\tilde{C}$ shares the same stoichiometric coefficient. In particular, $\tilde{C}$ is a s-cycle.  
\end{lem}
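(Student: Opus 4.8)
The plan is to first pin down the combinatorial structure of any cycle $\tilde{C}$ whose image under $\Phi$ is a c-pair, and then read off both conclusions from that structure. A c-pair consists of two edges $(\cf{X}_i, \cf{R})$ and $(\cf{X}_j, \cf{R})$ attached to a single bispecies R-node $\cf{R}$, whose reaction under \textbf{(N2)}--\textbf{(N4)} has the form $a_i\cf{X}_i + a_j\cf{X}_j \to \yy'$ with $\{i,j\} \cap \supp(\yy') = \emptyset$. I would begin by enumerating every edge of $\tilde{\DSR}$ that $\Phi$ sends into this c-pair. By \Cref{def:modifiedMAS}, $\cf{R}$ gives rise to exactly two R-nodes: $\tilde{\cf{R}}_1$ (reaction $a_i\cf{X}_i \to \yy' + a_j\cf{X}_j$) and $\tilde{\cf{R}}_2$ (reaction $a_j\cf{X}_j \to \yy' + a_i\cf{X}_i$). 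The only edges mapping into the c-pair are then $(\cf{X}_i, \tilde{\cf{R}}_1)$, $(\tilde{\cf{R}}_2, \cf{X}_i)$, $(\cf{X}_j, \tilde{\cf{R}}_2)$, and $(\tilde{\cf{R}}_1, \cf{X}_j)$, where the two directed edges $(\tilde{\cf{R}}_2, \cf{X}_i)$ and $(\tilde{\cf{R}}_1, \cf{X}_j)$ become undirected under $\Phi$ precisely because $\cf{X}_i, \cf{X}_j$ are reactants of $\cf{R}$ (as established in \Cref{prop:phi-hom}).

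Since every edge of $\tilde{C}$ must map into the c-pair, $\tilde{C}$ lives entirely on the four nodes $\cf{X}_i, \cf{X}_j, \tilde{\cf{R}}_1, \tilde{\cf{R}}_2$ and uses only these four edges. Because the modified reactions are single-reactant and \textbf{(N2)} forbids one-step catalysis, there are no multi-edges, so the unique simple cycle on these nodes is the $4$-cycle $\cf{X}_i \to \tilde{\cf{R}}_1 \to \cf{X}_j \to \tilde{\cf{R}}_2 \to \cf{X}_i$, which I would verify is compatible with the orientations (the two directed edges point out of the R-nodes toward the migrated products). Covering both edges of the c-pair forces $\tilde{C}$ to use all four edges, so $\tilde{C}$ must be exactly this $4$-cycle.

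The coefficient claim is then immediate: a reactant species carries its stoichiometric coefficient when migrated to the product side, so $\cf{X}_i$ meets $\tilde{C}$ along two edges both labelled $a_i$ and $\cf{X}_j$ along two edges both labelled $a_j$; equivalently, this follows from \Cref{lem:SR-hom-label}, since both edges at $\cf{X}_i$ map to the single edge $(\cf{X}_i,\cf{R})$ of label $a_i$. For the s-cycle conclusion I would invoke the alternating-product characterization in \Cref{def:SR-cycles}. In any bipartite cycle the two edges incident to a fixed S-node are consecutive, so under the alternating multiply/divide rule exactly one lands in the numerator and the other in the denominator; grouping the product by S-node therefore yields $\frac{a_i}{a_i}\cdot\frac{a_j}{a_j} = 1$. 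Hence the alternating product equals $1$ and $\tilde{C}$ is an s-cycle.

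The main obstacle is the first step: rigorously arguing that a cycle mapping to a c-pair is forced to be exactly this $4$-cycle. This requires carefully enumerating the $\Phi$-preimages of the two c-pair edges and checking orientations—in particular that a directed product edge of $\tilde{\DSR}$ may legitimately map to an undirected reactant edge of $\DSR$. Once the structure is fixed, the coefficient equality and the s-cycle property are routine.
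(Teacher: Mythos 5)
Your proposal is correct and takes essentially the same approach as the paper: the paper's proof simply points to \Cref{fig:SRmodified-local-b}, whose coloured edges exhibit exactly the $4$-cycle $\cf{X}_i \to \tilde{\cf{R}}_1 \to \cf{X}_j \to \tilde{\cf{R}}_2 \to \cf{X}_i$ with the paired stoichiometric coefficients $a_i, a_j, a_j, a_i$ that you derive. Your write-up is a rigorous expansion of that figure-based argument --- enumerating the four preimage edges of the c-pair, verifying orientations and the absence of multi-edges via \textbf{(N2)}, and computing the alternating product --- with no gaps.
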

\begin{proof}
Proof of the lemma follows immediately from \Cref{fig:SRmodified-local-b}, where the coloured edges are representative of a cycle $\tilde{C}$ that gets mapped to a c-pair under $\Phi$. 
\end{proof}

In the following lemma, we show that if no cycle in $\DSR$ contains a bispecies production edge, then there is a one-to-one correspondence between the cycles in $\tilde{\DSR}$ and the set of cycles and c-pairs in $\DSR$. By ruling out bispecies production edges, the R-nodes in $\tilde{\DSR}$ associated to a bispecies reaction can only be adjacent to its reactant S-nodes in any cycle.

\begin{lem}
\label{lem:SR-hom-cycle}
    Suppose that no cycle in $\DSR$ contains a bispecies production edge. 
    \begin{enumerate}[label={\textup{(\alph*)}}]
        \item  If $\tilde{C} \subseteq \tilde{\DSR}$ is a cycle, then either $\Phi(\tilde{C})$ is a c-pair or $\Phi$ is injective on $\tilde{C}$.
        \item  For any cycle $C \subseteq \DSR$, there exists a unique cycle $\tilde{C} \subseteq \tilde{\DSR}$ such that $\Phi(\tilde{C}) = C$.
    \end{enumerate} 
     There is a one-to-one correspondence between the set of cycles in $\tilde{\DSR}$, and the set of cycles and c-pairs in $\DSR$. 
\end{lem}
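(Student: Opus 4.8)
The plan is to analyze the local structure of a cycle $\tilde{C}$ of $\tilde{\DSR}$ at each \emph{bispecies} R-node it meets, since these are the only places where $\Phi$ can fail to be injective. Fix a bispecies reaction of $\mc N$ with reactant species $\cf{X}_i, \cf{X}_j$ and R-node $\cf{R}$; by \Cref{def:modifiedMAS} it lifts to two R-nodes $\tilde{\cf{R}}_1, \tilde{\cf{R}}_2$ with $\Phi(\tilde{\cf{R}}_1) = \Phi(\tilde{\cf{R}}_2) = \cf{R}$, where $\tilde{\cf{R}}_1$ has $\cf{X}_i$ as its sole reactant and emits a directed edge to $\cf{X}_j$ (the migrated reactant) and to each genuine product, and symmetrically for $\tilde{\cf{R}}_2$. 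The only undirected (reactant) edge at $\tilde{\cf{R}}_1$ is $(\cf{X}_i, \tilde{\cf{R}}_1)$; every other incident edge is directed away from $\tilde{\cf{R}}_1$. Hence any cycle through $\tilde{\cf{R}}_1$ must \emph{enter} along this reactant edge (the only one compatible with the $S \to R$ direction) and \emph{exit} along a directed edge, either to $\cf{X}_j$ or to a product $\cf{Z}$.

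The crux is to rule out the exit to a product. Suppose $\tilde{C}$ leaves $\tilde{\cf{R}}_1$ by a directed edge $\tilde{\cf{R}}_1 \to \cf{Z}$ with $\cf{Z}$ a genuine product. Applying $\Phi$, which is a graph homomorphism by \Cref{prop:phi-hom}, sends the closed walk $\tilde{C}$ to a closed walk $\Phi(\tilde{C})$ in $\DSR$ traversing the directed edge $\cf{R} \to \cf{Z}$, which is a \emph{bispecies production edge}. I claim this edge is traversed exactly once: its only $\Phi$-preimages are $(\cf{Z}, \tilde{\cf{R}}_1)$ and $(\cf{Z}, \tilde{\cf{R}}_2)$, both directed into the S-node $\cf{Z}$, and a simple cycle cannot use two edges oriented into the same vertex. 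I would then invoke the elementary fact that a closed walk traversing a directed edge exactly once contains a \emph{simple} cycle through that edge: follow $\Phi(\tilde{C})$ from $\cf{Z}$ until its first return to $\cf{R}$, and excise any closed sub-walk at a repeated intermediate vertex (none of which touch $\cf{R} \to \cf{Z}$), leaving a simple cycle through $\cf{R} \to \cf{Z}$. This yields a cycle of $\DSR$ containing a bispecies production edge, contradicting the hypothesis. Therefore every bispecies R-node in $\tilde{C}$ is exited along its migrated-reactant edge, $\tilde{\cf{R}}_1 \to \cf{X}_j$ or $\tilde{\cf{R}}_2 \to \cf{X}_i$. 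I expect this extraction step, together with the ``exactly once'' bookkeeping, to be the main obstacle, since $\Phi(\tilde{C})$ itself need not be simple.

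With the key fact in hand, part (a) is immediate: if $\Phi$ is not injective on $\tilde{C}$, then both $\tilde{\cf{R}}_1$ and $\tilde{\cf{R}}_2$ of some bispecies reaction lie on $\tilde{C}$; each is entered from its reactant and exited to the other reactant, forcing $\tilde{C}$ to be exactly the $4$-cycle $\cf{X}_i \to \tilde{\cf{R}}_1 \to \cf{X}_j \to \tilde{\cf{R}}_2 \to \cf{X}_i$, whose image $\Phi(\tilde{C})$ is precisely the c-pair at $\cf{R}$; otherwise $\Phi$ is injective on $\tilde{C}$. For part (b), given a cycle $C$ in $\DSR$, the hypothesis forbids bispecies production edges on $C$, so at each bispecies R-node $C$ uses its two reactant edges, traversing them as $\cf{X}_i \to \cf{R} \to \cf{X}_j$ or $\cf{X}_j \to \cf{R} \to \cf{X}_i$. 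I would lift $C$ node by node: S-nodes and non-bispecies R-nodes have unique preimages, while a traversal $\cf{X}_i \to \cf{R} \to \cf{X}_j$ lifts uniquely to $\cf{X}_i \to \tilde{\cf{R}}_1 \to \cf{X}_j$ (and the reversed orientation to $\tilde{\cf{R}}_2$), since only $\tilde{\cf{R}}_1$ admits entry from $\cf{X}_i$ and exit to $\cf{X}_j$. Stitching these lifts yields a cycle $\tilde{C}$ with $\Phi(\tilde{C}) = C$, and the orientation-forced choice at each bispecies node gives uniqueness.

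Finally I would assemble the correspondence. The map $\tilde{C} \mapsto \Phi(\tilde{C})$ is well defined into the set of cycles and c-pairs of $\DSR$ by part (a). Cycles of $\tilde{\DSR}$ on which $\Phi$ is injective map to cycles of $\DSR$, with two-sided inverse given by the unique lift of part (b); the remaining cycles are exactly the $4$-cycles above, which map bijectively onto c-pairs, since each c-pair at a bispecies $\cf{R}$ has the single preimage $\cf{X}_i \to \tilde{\cf{R}}_1 \to \cf{X}_j \to \tilde{\cf{R}}_2 \to \cf{X}_i$, again by the forcing established in part (a). These two families are disjoint and exhaust the cycles of $\tilde{\DSR}$, establishing the one-to-one correspondence between the cycles of $\tilde{\DSR}$ and the cycles and c-pairs of $\DSR$.
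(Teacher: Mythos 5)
Your proof is correct and follows essentially the same route as the paper's: analyze the cycle locally at the lifted bispecies R-nodes, whose single undirected reactant edge forces the entry, use the no-bispecies-production-edge hypothesis to force the exit toward the migrated reactant (so a non-injective cycle collapses to the $4$-cycle $\cf{X}_i$, $\tilde{\cf{R}}_1$, $\cf{X}_j$, $\tilde{\cf{R}}_2$ mapping to a c-pair), and lift cycles of $\DSR$ node by node with orientation forcing uniqueness at each bispecies R-node. Your only real departure is added rigor: the ``traversed exactly once'' bookkeeping plus the loop-erasure extraction of a simple cycle through $\cf{R}\to\cf{Z}$ makes precise a step the paper merely asserts (that ``within $\Phi(\tilde{C})$ there is a cycle that contains the path $\cf{X}_i$, $\cf{R}$, $\cf{X}_k$'', even though $\Phi(\tilde{C})$ need not be simple), which is a genuine improvement in detail rather than a different approach.
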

\begin{proof}
\begin{enumerate}[label={\textup{(\alph*)}}]
\item 
    Restrict the map $\Phi$ to the cycle $\tilde{C} \subseteq \tilde{\DSR}$. If $\Phi$ is vertex-injective on $\tilde{C}$, then $\Phi(\tilde{C})$ is a cycle. However, if $\Phi$ is not vertex-injective, then there exist R-nodes $\tilde{\cf{R}}_i \neq \tilde{\cf{R}}_j$ in $\tilde{C}$ such that $\cf{R} = \Phi(\tilde{\cf{R}}_i) = \Phi(\tilde{\cf{R}}_j)$ is a bispecies R-node, with reactants $\cf{X}_i$ and $\cf{X}_j$. In $\tilde{\DSR}$, there is exactly one undirected edge adjacent to $\tilde{\cf{R}}_i$, while all other edges are outgoing directed edges. Without loss of generality, let $(\cf{X}_i, \tilde{\cf{R}}_i)$ be this undirected edge. In particular, in order for $\tilde{\cf{R}}_i$ to be in $\tilde{C}$, necessarily $(\cf{X}_i, \tilde{\cf{R}}_i)$ is in $\tilde{C}$. By a similar argument, the undirected edge $(\cf{X}_j, \tilde{\cf{R}}_j)$ must also be in $\tilde{C}$. 
    
    Next, we claim that in the cycle $\tilde{C}$ is the directed edge from $\tilde{\cf{R}}_i$ to $\cf{X}_j$ because no cycle in $\DSR$ contains a bispecies production edge. Suppose for a contradiction that $(\cf{X}_j, \tilde{\cf{R}}_i)$ is not an edge in $\tilde{C}$. Then $\tilde{C} = \left< \cf{X}_i, \tilde{\cf{R}}_i, \cf{X}_k, \cdots \right>$, where $\cf{X}_k$ is a product in $\cf{R}$, and the edge from $\cf{R}_i$ to $\cf{X}_k$ is a \emph{directed} edge. Thus, within $\Phi(\tilde{C})$ there is a cycle that contains the path $\cf{X}_i$, $\cf{R}$, and $\cf{X}_k$, i.e., there is a cycle with a bispecies production edge. Thus it must be the case that in the cycle $\tilde{C}$ is the edge from $\tilde{\cf{R}}_i$ to $\cf{X}_j$ and not to some other species. 
    
    An analogous argument as above imply that $\tilde{C} = \left< \cf{X}_i, \tilde{\cf{R}}_i, \cf{X}_j, \tilde{\cf{R}}_j\right>$, so $\Phi(\tilde{C})$ is a c-pair.

    %%%%%%%%%%
    
\item     
    Conversely, let $C\subseteq \DSR$ be a cycle. If $C$ does not contain any bispecies R-nodes, then because $\Phi$ acted bijectively on the cycle, $\Phi^{-1}(C) \subseteq \tilde{\DSR}$ is the unique cycle mapping to $C$. Suppose however, that $C$ contains a bispecies R-node; we claim that there is a unique cycle in $\tilde{\DSR}$ that gets mapped to $C$, as illustrated in \Cref{fig:lem:SR-hom-cycle}. For now, assume there is exactly one bispecies R-node $\cf{R}$. Let the cycle $C$ be $\left<v_\ell = v_0, v_1, v_2, \ldots, v_{\ell-1}\right>$, where $v_1 = \cf{R}$ is the bispecies R-node, with reactant species S-nodes $v_0 = \cf{X}_i$ and $v_2 = \cf{X}_j$. For $k \neq 1$, the vertex $v_k$ has a unique preimage under $\Phi$. However, the preimage of $v_1$ consists of two R-nodes: $\tilde{\cf{R}}_i$ for the modified reaction where $\cf{X}_i$ is the reactant, and $\tilde{\cf{R}}_j$ for the reaction where $\cf{X}_j$ is the reactant. Then $\Phi$ maps the cycle $\left<v_\ell = v_0, \tilde{\cf{R}}_i, v_2, \ldots, v_{\ell-1}\right> \subseteq \tilde{\DSR}$ uniquely to $C$. 

    If $C$ contains multiple bispecies R-node, a similar argument can be made locally at each bispecies R-node. Suppose $v_k = \cf{R}$ is a bispecies R-node, then $v_{k-1}$ and $v_{k+1}$ are S-nodes corresponding to the reactants of $\cf{R}$. Say $v_{k-1} = \cf{X}_i$ and $v_{k+1} = \cf{X}_j$. In the \SR\ $\DSR$ of the modified network, $\cf{R}$ is associated to two R-nodes: $\tilde{\cf{R}}_i$ with reactant $\cf{X}_i$ and $\tilde{\cf{R}}_j$ another with reactant $\cf{X}_j$. Then for the segment $\left< v_{k-1}, v_k, v_{k+1}\right>$ of the cycle, choose $\left< v_{k-1} = \cf{X}_i, \tilde{\cf{R}}_i, v_{k+1}=\cf{X}_j\right>$ as its preimage under $\Phi$. 
\end{enumerate}
\end{proof}

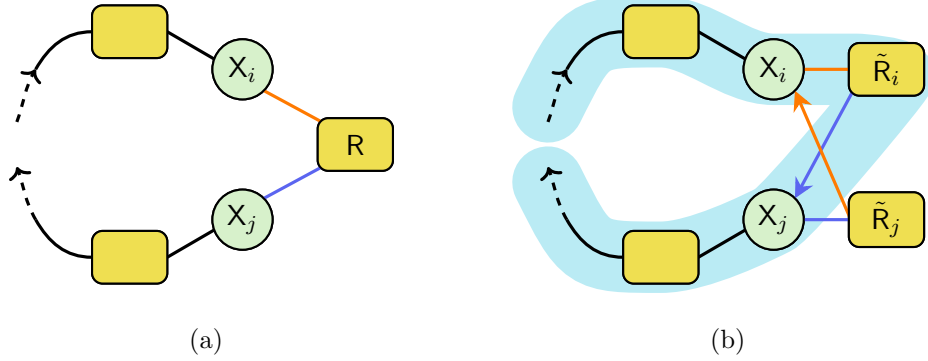
\begin{figure}[h!tbp]
\centering
\begin{subfigure}[t]{0.37\textwidth}
    \centering 
    \begin{tikzpicture}
    \node at (0,-2) {};
    \node[Snode] (x) at (0,1) {$\cf{X}_i$};
    \node[Snode] (y) at (0,-1) {$\cf{X}_j$};
    \node[Rnode] (r1) at (1.5,0) {$\cf{R}$};
    \draw [SR, highlightorange] ([yshift=0cm] x.south east)--([srNW] r1.north west) ;
    \draw [SR, highlightblue] ([yshift=-0cm] y.north east)--([srSW] r1.south west) ;
    
    %%% rest of cycle
    \node[Rnode] (l1) at (-1.5,1.5) {};%{$v_{\ell-1}$};
    \node[Rnode] (3) at (-1.5,-1.5) {};%{$v_{3}$};
    
    \draw [SR] ([yshift=-0.2cm] x.north west)--(l1.east) ;
    \draw [SR] ([yshift=0.2cm] y.south west)--(3.east) ;
    
    \draw [SR] (3.west) to [out=180, in=-60] (-2.75,-1);
    \draw [SR, dashed, ->] (-2.75,-1) to [out=120, in=-70] (-3,-0.3);
    
    \draw [SR, dashed, ->] (-3,0.3) to [out=70, in=-120] (-2.75,1);
    \draw [SR] (-2.75,01) to [out=60, in=-180] (l1.west);

    %%%%%%%%% JUST COPY %%%%%%%%%%%%
    \node[Snode] (x) at (0,1) {$\cf{X}_i$};
    \node[Snode] (y) at (0,-1) {$\cf{X}_j$};
    \node[Rnode] (r1) at (1.5,0) {$\cf{R}$};
    \node[Rnode] (l1) at (-1.5,1.5) {};%{$v_{\ell-1}$};
    \node[Rnode] (3) at (-1.5,-1.5) {};%{$v_{3}$};
\end{tikzpicture}
    \caption{}
    \label{fig:lem:SR-hom-cycle-a}
\end{subfigure}\hspace{0.05\textwidth}%%%%%%%%%% 
\begin{subfigure}[t]{0.37\textwidth}
    \centering 
    \begin{tikzpicture}
\draw [cycopen, highlightbabyblue] (-3,0.5) .. controls (-2.5,1.5) .. (-1.5,1.5)
    .. controls (-1,1.5) and (-0.5,1) ..  (0,1)
    .. controls (1,1).. (1.1,1)
    .. controls (1.2,1) and (1.65,0.95).. (1.65,0.9)
    .. controls (1.65,0.85) and (0.2,-0.9) .. (0,-1)
    .. controls (-0.2,-1.1) and (-1,-1.5) .. (-1.5,-1.5)
    .. controls (-2.5,-1.5) .. (-3,-0.5)
    ;

    \node at (0,-2) {};
    \node[Snode] (x) at (0,1) {$\cf{X}_i$};
    \node[Snode] (y) at (0,-1) {$\cf{X}_j$};
    \node[Rnode] (r1) at (1.5,1) {$\tilde{\cf{R}}_i$};
    \node[Rnode] (r2) at (1.5,-1) {$\tilde{\cf{R}}_j$};
    \draw [SR, highlightorange] (x.east)--(r1.west) ;
    \draw [SR, highlightblue] (y.east)--(r2.west) ;
    
    \draw [DSR, highlightblue] ([srSW] r1.south west)--(y.north east) ;
    \draw [DSR, highlightorange] ([srNW] r2.west)--(x.south east) ;
    %%% rest of cycle
    \node[Rnode] (l1) at (-1.5,1.5) {$v_{\ell-1}$};
    \node[Rnode] (3) at (-1.5,-1.5) {$v_{3}$};
    
    \draw [SR] ([yshift=-0.2cm] x.north west)--(l1.east) ;
    \draw [SR] ([yshift=0.2cm] y.south west)--(3.east) ;
    
    \draw [SR] (3.west) to [out=180, in=-60] (-2.75,-1);
    \draw [SR, dashed, ->] (-2.75,-1) to [out=120, in=-70] (-3,-0.3);
    
    \draw [SR, dashed, ->] (-3,0.3) to [out=70, in=-120] (-2.75,1);
    \draw [SR] (-2.75,01) to [out=60, in=-180] (l1.west);

    %%%%%%%%% JUST COPY %%%%%%%%%%%%
    \node[Snode] (x) at (0,1) {$\cf{X}_i$};
    \node[Snode] (y) at (0,-1) {$\cf{X}_j$};
    \node[Rnode] (r1) at (1.5,1) {$\tilde{\cf{R}}_i$};
    \node[Rnode] (r2) at (1.5,-1) {$\tilde{\cf{R}}_j$};
    \node[Rnode] (l1) at (-1.5,1.5) {};%{$v_{\ell-1}$};
    \node[Rnode] (3) at (-1.5,-1.5) {};%{$v_{3}$};

\end{tikzpicture}
    \caption{}
    \label{fig:lem:SR-hom-cycle-b}
\end{subfigure}
    \caption[Correspondence of cycles in original and modified \SR s if no cycle contains bispecies production edge]{If no cycle in $\DSR$ contains a bispecies production edge, the preimage of a cycle in $\DSR$ is a unique cycle in $\tilde{\DSR}$. A cycle $C \subseteq \DSR$ containing a bispecies R-node (a) is uniquely mapped from the cycle in (b). The edges adjacent to the R-node are coloured to emphasize the pairing in the \SR s, and arrows indicate direction of the cycle.}
    \label{fig:lem:SR-hom-cycle}
\end{figure}

We are ready to approach the proof of \Cref{thm:SRmodified}. When no cycle contains a bispecies production edge,  \Cref{prop:SR-hom-s-cycle} relates the s-cycles of the \SR s, while \Cref{prop:SR-hom-SRintxn} relates the S-to-R intersections. These two propositions together imply \Cref{thm:SRmodified}. 

Note that in the following proposition, it is not necessary to assume that $\DSR$ does not have a cycle with a bispecies product edge. However, with this assumption, the proof greatly simplifies.

\begin{prop}
\label{prop:SR-hom-s-cycle}
    Assume that no cycle in $\DSR$ contains a bispecies production edge.  Then all cycles in $\DSR$ are s-cycles if and only if all cycles in $\tilde{\DSR}$ are s-cycles. 
\end{prop}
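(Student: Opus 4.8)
The plan is to reduce the statement to two established facts about the homomorphism $\Phi$ of \Cref{prop:phi-hom}: that it preserves stoichiometric coefficients (\Cref{lem:SR-hom-label}), and that, under the standing assumption, it induces the cycle correspondence of \Cref{lem:SR-hom-cycle}. The guiding observation I would record first is that the s-cycle condition depends only on the alternating product of the stoichiometric coefficients around a cycle. Since every cycle in a bipartite graph has even length, one may split its edges into the ``odd'' and ``even'' positions, and the cycle is an s-cycle exactly when the product of the coefficients over the odd edges equals the product over the even edges (this value is well defined up to reciprocal, so the equality ``$=1$'' is intrinsic). Because $\Phi$ sends each edge to an edge of the same coefficient, whenever $\Phi$ acts injectively on a cycle $\tilde{C}$, so that $C = \Phi(\tilde{C})$ is again a cycle, the restriction $\Phi|_{\tilde{C}}$ is a coefficient-preserving isomorphism of cycles; it matches odd edges with odd edges and even with even, so $\tilde{C}$ is an s-cycle if and only if $C$ is.

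With this in hand, the forward implication is quick. Assuming every cycle in $\DSR$ is an s-cycle, I would take an arbitrary cycle $\tilde{C} \subseteq \tilde{\DSR}$ and invoke the dichotomy of \Cref{lem:SR-hom-cycle}(a): either $\Phi(\tilde{C})$ is a c-pair, in which case \Cref{rmk:cpair-scycle} already declares $\tilde{C}$ to be an s-cycle, or $\Phi$ is injective on $\tilde{C}$, in which case $C = \Phi(\tilde{C})$ is a cycle of $\DSR$, hence an s-cycle by hypothesis, and the guiding observation transfers this back to $\tilde{C}$. Either branch yields that $\tilde{C}$ is an s-cycle.

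For the converse, assume every cycle in $\tilde{\DSR}$ is an s-cycle and fix a cycle $C \subseteq \DSR$. By \Cref{lem:SR-hom-cycle}(b) there is a unique $\tilde{C} \subseteq \tilde{\DSR}$ with $\Phi(\tilde{C}) = C$, and since this image is a genuine cycle (not a c-pair), the dichotomy of \Cref{lem:SR-hom-cycle}(a) forces $\Phi$ to act injectively on $\tilde{C}$. As $\tilde{C}$ is an s-cycle by hypothesis, the guiding observation again transfers the property to $C$.

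The only part carrying real content is the invariance of the alternating product under the coefficient-preserving isomorphism $\Phi|_{\tilde{C}}$, together with the separate handling of cycles whose image is a c-pair --- the case where the two R-nodes of a bispecies reaction collapse under $\Phi$. I do not expect a genuine obstacle, since \Cref{lem:SR-hom-cycle} has already done the structural work of pairing cycles of $\tilde{\DSR}$ with cycles and c-pairs of $\DSR$; this proposition is essentially the assertion that the pairing respects the s-cycle property, handled by \Cref{lem:SR-hom-label} on the cycle side and by \Cref{rmk:cpair-scycle} on the c-pair side.
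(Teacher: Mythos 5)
Your proof is correct and follows essentially the same route as the paper's: both directions invoke the cycle/c-pair dichotomy of \Cref{lem:SR-hom-cycle}, handle the c-pair case via \Cref{rmk:cpair-scycle}, and transfer the s-cycle property through the coefficient-preserving map of \Cref{lem:SR-hom-label}. Your explicit justification that a coefficient-preserving cycle isomorphism preserves the alternating product (up to reciprocal) merely spells out what the paper leaves implicit.
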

\begin{proof}
    Suppose all cycles in $\DSR$ are s-cycles. Let $\tilde{C} \subseteq \tilde{\DSR}$ be a cycle. \Cref{lem:SR-hom-cycle} implies that $\Phi(\tilde{C})$ is either a cycle or a c-pair. Recall that $\Phi$ preserves the stoichiometric coefficients. Thus $\tilde{C}$ is a s-cycle by assumption in the former case and by \Cref{rmk:cpair-scycle} in the latter. 
    
    Conversely, suppose all cycles in $\tilde{\DSR}$ are s-cycles, and let $C \subseteq \DSR$ be any cycle. By \Cref{lem:SR-hom-cycle}, $\Phi^{-1}(C)$ is a cycle, thus a s-cycle by assumption. Hence, its image $C$ is also a s-cycle.
\end{proof}

Finally, we reached our last proposition, and the subtle connection between the \SR s of a network and its modified version. \Cref{fig:SRIntxn-counterex} shows that if $\DSR$ has a cycle with a bispecies production edge, then there is a S-to-R intersection in  $\tilde{\DSR}$. 

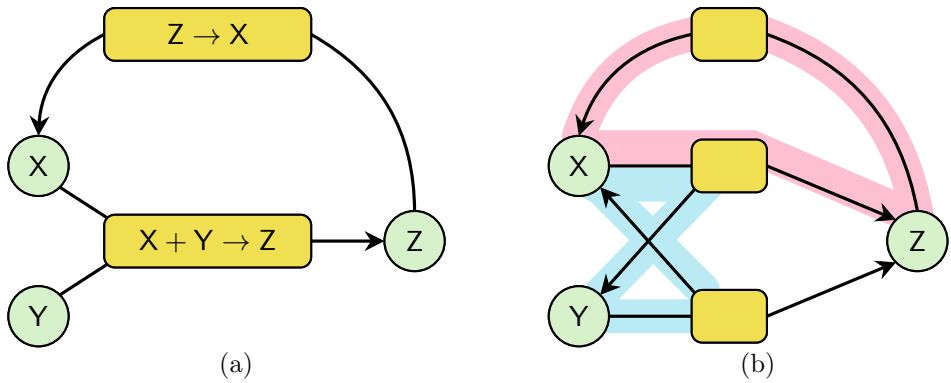
\begin{figure}[h!tbp]
\centering
\begin{subfigure}[t]{0.37\textwidth}
    \centering 
    \begin{tikzpicture}
    \node at (0,3.3) {};
    %%%%%%%%%%%%%%%%%%%%%
    \node[Snode] (x) at (0.25,1) {$\cf{X}$};
    \node[Snode] (y) at (0.25,-1) {$\cf{Y}$};
    \node[Snode] (z) at (5.25,0) {$\cf{Z}$};
    \node[Rnodelong] (r1) at (2.5,0) {$\cf{X}+\cf{Y} \to \cf{Z}$};
    \node[Rnodelong] (r2) at (2.5,2.75) {$\cf{Z} \to \cf{X}$};
    %%%%%%
    %% X + Y \to 2Z
    \draw [SR] (x.south east)--([srNW] r1.north west) node [midway, above] {\footnotesize };
    \draw [SR] (y.north east)--([srSW] r1.south west) node [midway, below] {\footnotesize };
    \draw [DSR] (r1.east)--(z.west) node [midway, above] {\footnotesize };
    %% Z \to X
    \draw [SR] (z.north) to [bend right] node[midway, above] {\footnotesize } (r2.east) ;
    \draw [DSR] (r2.west) to [bend right] node [midway, above] {\footnotesize } (x.north);
    
    %%%%%%%%% JUST COPY %%%%%%%%%%%%
   \node[Snode] (x) at (0.25,1) {$\cf{X}$};
    \node[Snode] (y) at (0.25,-1) {$\cf{Y}$};
    \node[Snode] (z) at (5.25,0) {$\cf{Z}$};
    \node[Rnodelong] (r1) at (2.5,0) {$\cf{X}+\cf{Y} \to \cf{Z}$};
    \node[Rnodelong] (r2) at (2.5,2.75) {$\cf{Z} \to \cf{X}$};
\end{tikzpicture}
    \vspace{-0.25cm}
    \caption{}
    \label{fig:SRIntxn-counterex-a}
\end{subfigure}\hspace{0.05\textwidth}%%%%%%%%%% 
\begin{subfigure}[t]{0.37\textwidth}
    \centering 
    \begin{tikzpicture}
\node at (0,3.3) {};
    \node[Snode, opacity=0] (x) at (0.25,1) {$\cf{X}$};
    \node[Snode, opacity=0] (y) at (0.25,-1) {$\cf{Y}$};
    \node[Snode, opacity=0] (z) at (5.25,0) {};
    \node[Rnodelong,  opacity=0] (r1) at (2.5,0) {};
    \node[Rnodelong, opacity=0] (r2) at (2.5,2.75) {};
    %%%%%%
    %% X + Y \to 2Z
    \draw [SR, opacity=0] (x.east)--([srNW] r1.north west) node [midway, above] {\footnotesize };
    \draw [SR, opacity=0] (y.east)--([srSW] r1.south west) node [midway, below] {\footnotesize };
    \draw [DSR, opacity=0] (r1.east)--(z.west) node [midway, above] {\footnotesize };
    %% Z \to X
    \draw [SR, opacity=0] (z.north) to [bend right] node[midway, above] {\footnotesize } (r2.east) ;
    \draw [DSR, opacity=0] (r2.west) to [bend right] node [midway, above] {\footnotesize } (x.north);

    %%%%%%%%%%%%%%%%%%%%%
   \node[Snode] (x) at (0.5,1) {};
    \node[Snode] (y) at (0.5,-1) {};
    \node[Snode] (z) at (5,0) {};
    \node[Rnode] (r1a) at (2.5,1) {};
    \node[Rnode] (r1b) at (2.5,-1) {};
    \node[Rnode] (r2) at (2.5,2.75) {};
    %%%%%% SR Intersection highlights
    \draw [cycopenhalf, highlightbabypink]  ([yshift=0.25cm,xshift=-0.2cm]x.east)--([yshift=0.25cm, xshift=-0.2cm]r1a.east)--([yshift=-0.1cm]z.north) to [bend right] (r2.east)-- (r2.west) to [bend right] (x.north); 
    \draw [cycopenhalf, highlightbabyblue]  ([yshift=-0.25cm,xshift=-0.2cm]x.east)--([yshift=-0.25cm, xshift=0.2cm]r1a.west) -- (y)--(r1b)--(x); 
    %%%%%%
    %% X + Y \to 2Z
    \draw [SR] (x.east)--(r1a.west) node [midway, above] {\footnotesize };
        \draw [DSR] ([srNW] r1b.north west)--(x.south east) node [midway, above] {\footnotesize };
    \draw [SR] (y.east)--(r1b.west) node [midway, below] {\footnotesize };
        \draw [DSR] ([srSW] r1a.south west)--(y.north east) node [midway, above] {\footnotesize };
    \draw [DSR] (r1a.east)--(z.north west) node [midway, above] {\footnotesize };
    \draw [DSR] (r1b.east)--(z.south west) node [midway, above] {\footnotesize };
    %% Z \to X
    \draw [SR] (z.north) to [bend right] node[midway, above] {\footnotesize } (r2.east) ;
    \draw [DSR] (r2.west) to [bend right] node [midway, above] {\footnotesize } (x.north);

    %%%%%%%%% JUST COPY %%%%%%%%%%%%
    \node[Snode] (x) at (0.5,1) {\cf{X}};
    \node[Snode] (y) at (0.5,-1) {\cf{Y}};
    \node[Snode] (z) at (5,0) {\cf{Z}};
    \node[Rnode] (r1a) at (2.5,1) {};
    \node[Rnode] (r1b) at (2.5,-1) {};
    \node[Rnode] (r2) at (2.5,2.75) {};
\end{tikzpicture}
    \vspace{-0.25cm}
    \caption{ }
    \label{fig:SRIntxn-counterex-b}
\end{subfigure}
    \caption[S-to-R intersection exists if a cycle contains a bispecies production edge]{(a) The \SR\ of a network that has a cycle with a bispecies production edge. (b) The \SR\ of the modified network, which contains a S-to-R intersection. Two cycles are highlighted whose intersection is one edge.}
    \label{fig:SRIntxn-counterex}
\end{figure}

\begin{prop}
\label{prop:SR-hom-SRintxn}
    There is a S-to-R intersection in $\tilde{\DSR}$ if and only if there is either
    \begin{enumerate}[label={\textup{(\alph*)}}]
        \item 
             S-to-R intersection in $\DSR$, or 
        \item 
            a cycle in $\DSR$ containing a bispecies production edge.
    \end{enumerate}
\end{prop}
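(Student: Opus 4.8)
The plan is to prove both implications of the biconditional, using condition (b) as an escape hatch so that the delicate lifting arguments only have to be carried out under the extra hypothesis that \emph{no} cycle of $\DSR$ contains a bispecies production edge --- precisely the regime in which \Cref{lem:SR-hom-cycle} supplies a clean one-to-one correspondence between cycles.

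For the implication (b) $\Rightarrow$ \emph{S-to-R intersection in $\tilde{\DSR}$}, I would argue directly, as suggested by \Cref{fig:SRIntxn-counterex}. Given a cycle $C$ through a bispecies R-node $\cf{R}$ (with reactants $\cf{X}_i,\cf{X}_j$) that leaves $\cf{R}$ along a production edge $\cf{R}\to\cf{X}_k$, I lift $C$ to the cycle $\tilde{C}_1$ obtained by replacing $\cf{R}$ with the modified R-node $\tilde{\cf{R}}_i$, and pair it with the short ``c-pair cycle'' $\tilde{C}_0 = \left< \cf{X}_i,\tilde{\cf{R}}_i,\cf{X}_j,\tilde{\cf{R}}_j\right>$ that always exists in $\tilde{\DSR}$. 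Since $\cf{X}_k$ is a product, hence distinct from $\cf{X}_i,\cf{X}_j$ by \textbf{(N2)}, and since $C$ meets $\cf{R}$ only once, the only shared edge is the undirected reactant edge $(\cf{X}_i,\tilde{\cf{R}}_i)$; this is a single S-to-R edge, so $\tilde{C}_0$ and $\tilde{C}_1$ witness an S-to-R intersection.

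For the two remaining implications I would assume no cycle of $\DSR$ has a bispecies production edge (otherwise (b) holds and the previous paragraph applies) and exploit the unique cycle correspondence of \Cref{lem:SR-hom-cycle} together with the fact, from \Cref{prop:phi-hom}, that $\Phi$ is an orientation-preserving homomorphism. The key structural observation, which I would isolate as a claim, is that under this hypothesis any cycle passing through a bispecies R-node $\cf{R}$ must traverse the full reactant c-pair $\cf{X}_i - \cf{R} - \cf{X}_j$, as it cannot exit along a production edge. Consequently, if two cycles both pass through $\cf{R}$ and their oriented intersection contained an edge at $\cf{R}$, consistency of orientations would force them to traverse the c-pair in the \emph{same} direction, producing a shared path $\cf{X}_i \to \cf{R} \to \cf{X}_j$ of even length --- which is forbidden in an S-to-R intersection. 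Hence for cycles in S-to-R intersection no shared edge can sit at a bispecies R-node, and away from such nodes $\Phi$ restricts to an isomorphism.

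With this claim in hand, both directions follow by transporting the intersection across $\Phi$. Going forward, if $\tilde{C}_1,\tilde{C}_2$ have S-to-R intersection I would first rule out that either $\Phi(\tilde{C}_m)$ is a mere c-pair --- that case forces either an even shared path or a bispecies production edge --- conclude that both images are genuine cycles, and then show that $\Phi(\tilde{C}_1)\cap\Phi(\tilde{C}_2)$ equals the isomorphic image of $\tilde{C}_1\cap\tilde{C}_2$: the only dangerous mergings (two distinct edges of $\tilde{\DSR}$ sharing an image) occur at bispecies R-nodes and are excluded by the claim, while an oppositely-traversed c-pair simply drops out of the \emph{oriented} intersection. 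Going backward, given $C_1,C_2$ in S-to-R intersection in $\DSR$, their unique lifts $\tilde{C}_1,\tilde{C}_2$ have intersection mapping into $C_1\cap C_2$; since the latter avoids bispecies R-nodes by the claim, $\Phi$ is a bijection there and the lifted intersection is again a disjoint union of odd paths. The hard part throughout is exactly this bookkeeping at bispecies R-nodes: one must use the \emph{orientation-aware} definition of cycle intersection to see that opposite traversals of a shared c-pair cancel, which is what prevents spurious even-length components and makes the correspondence of S-to-R intersections exact.
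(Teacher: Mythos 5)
Your overall architecture matches the paper's: the same direct construction for the implication from (b) (pairing the c-pair cycle $\left<\cf{X}_i,\tilde{\cf{R}}_i,\cf{X}_j,\tilde{\cf{R}}_j\right>$ with the lift of $C$ through $\tilde{\cf{R}}_i$, meeting in the single reactant edge $(\cf{X}_i,\tilde{\cf{R}}_i)$), and, under the hypothesis that no cycle of $\DSR$ contains a bispecies production edge, the same case analysis on whether $\Phi(\tilde{C}_m)$ is a c-pair or a cycle, transporting oriented intersections via \Cref{lem:SR-hom-cycle}. However, your isolated ``key structural observation'' is false, and its failure leaves a genuine gap. The claim asserts that two cycles in S-to-R intersection can share no edge at a bispecies R-node. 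Counterexample: take the network $\cf{S}\to\cf{X}$, $\cf{X}+\cf{Y}\to\cf{0}$, $\cf{Y}\to\cf{A}+\cf{B}$, $\cf{A}\to\cf{S}$, $\cf{B}\to\cf{S}$ (writing $\cf{R}_a$, $\cf{R}$, $\cf{R}_e$ for the first three reactions; add outflows for \textbf{(N1)} if desired---they do not enter the \SR). It satisfies \textbf{(N2)}--\textbf{(N4)} and has no bispecies production edge at all, since the bispecies reaction has no products. The cycle returning through $\cf{A}$ and the cycle returning through $\cf{B}$ share exactly the path $\cf{S}\to\cf{R}_a\to\cf{X}\to\cf{R}\to\cf{Y}\to\cf{R}_e$: a single connected component of the oriented intersection, of odd length $5$, from an S-node to an R-node---so the two cycles are in S-to-R intersection even though both edges of the c-pair at the bispecies node $\cf{R}$ are shared. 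Your derivation of the claim conflates subpaths with components: \Cref{def:SR-cycles} constrains the \emph{connected components} of $C_1\cap C_2$ to be odd paths, and the even subpath $\cf{X}_i\to\cf{R}\to\cf{X}_j$ can perfectly well sit inside an odd component that extends by one more edge on one side.

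Where this bites is your backward direction: you write that $C_1\cap C_2$ ``avoids bispecies R-nodes by the claim,'' so that $\Phi$ is a bijection near the intersection; in the situation above that premise fails and your argument does not cover the case. The repair is short, and is what the paper's proof does implicitly: if the oriented intersection contains the c-pair, consistency of orientations forces both cycles to traverse it in the same direction $\cf{X}_i\to\cf{R}\to\cf{X}_j$, and the unique lift of \Cref{lem:SR-hom-cycle} replaces $\cf{R}$ by the modified node $\tilde{\cf{R}}_i$ determined by the \emph{entering} reactant---the same node for both lifts. Hence the shared segment lifts to the shared segment $\left<\cf{X}_i,\tilde{\cf{R}}_i,\cf{X}_j\right>$, and $\Phi$ restricts to an isomorphism from $\tilde{C}_1\cap\tilde{C}_2$ onto $C_1\cap C_2$ preserving components, lengths, and endpoint types. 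Your forward direction survives independently of the claim: the only ``dangerous merging'' (the two lifts routed through $\tilde{\cf{R}}_i$ and $\tilde{\cf{R}}_j$ respectively) produces oppositely oriented image edges that drop out of the oriented intersection, which you already observe, while same-node sharing maps bijectively. With the false claim deleted and the backward step patched as above, your proof coincides with the paper's.
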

\begin{proof}
    First we prove that if $\DSR$ has no S-to-R intersection and no cycles with a bispecies production edge, then $\tilde{\DSR}$ cannot have S-to-R intersection. Let $\tilde{C}_1 \neq \tilde{C}_2$ be two cycles in $\tilde{\DSR}$ such that $\tilde{C}_2 \cap \tilde{C}_2$ contains some R-node and S-node in a connected component; we need to show that $\tilde{C}_1 \cap \tilde{C}_2$ is \emph{not} a S-to-R intersection. Without loss of generality, \Cref{lem:SR-hom-cycle} implies we have three cases (\Cref{fig:SRIntxn-pf}) to handle separately:
    \begin{enumerate}
    \item $\Phi(\tilde{C}_1)$ and $\Phi(\tilde{C}_2)$ are c-pairs;
    \item $\Phi(\tilde{C}_1)$ is a cycle while $\Phi(\tilde{C}_2)$ is a c-pair;
    \item $\Phi(\tilde{C}_1)$ and $\Phi(\tilde{C}_2)$ are both cycles. 
    \end{enumerate}

    In the case where $\Phi(\tilde{C}_1)$ and $\Phi(\tilde{C}_2)$ are c-pairs (\Cref{fig:SRIntxn-pf-a}), they must share the same R-node, which is a bispecies R-node in $\DSR$. Since a bispecies R-node has exactly two incoming (undirected in this case) edges, $\tilde{C}_1 = \tilde{C}_2$, which contradicts our assumption.
    
    In the next case, the unique R-node of the c-pair $\Phi(\tilde{C}_2)$ is a bispecies R-node in the cycle $\Phi(\tilde{C}_1)$. Because the two edges in $\Phi(\tilde{C}_2)$ form part of the cycle, $\Phi(\tilde{C}_2)$ has the form of the partially shown cycle in \Cref{fig:SRIntxn-pf-b}. The intersection $\tilde{C}_1 \cap \tilde{C}_2$ consists of exactly two edges in $\tilde{\DSR}$ that get mapped to the c-pair. The intersection in $\tilde{\DSR}$ is similar to the segment $\left<\cf{X}_i, \tilde{\cf{R}}_i, \cf{X}_j\right>$ in \Cref{fig:lem:SR-hom-cycle}. Thus, $\tilde{C}_1 \cap \tilde{C}_2$ starts and ends at S-nodes, i.e., it is not a S-to-R intersection. 
    
    In the final case, $\Phi(\tilde{C}_1)$ and $\Phi(\tilde{C}_2)$ are cycles. Since no cycle in $\DSR$ contains a bispecies production edge, $\Phi$ acts injectively on the cycles $\tilde{C}_1$ and $\tilde{C}_2$, and $\Phi(\tilde{C}_1 \cap \tilde{C}_2) = \Phi(\tilde{C}_1) \cap \Phi(\tilde{C}_2)$. Because $\DSR$ does not have any S-to-R intersection, $\tilde{C}_1 \cap \tilde{C}_2$ is not a S-to-R intersection.

\begin{figure}[h!tbp]
\centering
\begin{subfigure}[b]{0.25\textwidth}
    \centering 
    \begin{tikzpicture}
    \node at (0,-1.5) {};
    \node at (0,1) {};
    
    \node[Snode] (x) at (-1,1) {};
    \node[Snode] (y) at (-1,-1) {};
    \node[Rnode] (r) at (0,0) {};
    \draw [SR, highlightblue] (x.south east)--([srNW]r.north west) ;
    \draw [SR, highlightblue] (y.north east)--([srSW]r.south west) ;
    
    \begin{scope}[transform canvas={xshift=0.06cm, yshift=0.055cm}]
    \draw [SR] (x.south east)--([srNW]r.north west) ;
    \end{scope}
    \begin{scope}[transform canvas={xshift=0.06cm, yshift=-0.055cm}]
    \draw [SR] (y.north east)--([srSW]r.south west) ;
    \end{scope}

    %%%%%%%%% JUST COPY %%%%%%%%%%%%
   \node[Snode] (x) at (-1,1) {};
    \node[Snode] (y) at (-1,-1) {};
    \node[Rnode] (r) at (0,0) {};
\end{tikzpicture}
    \caption{}
    \label{fig:SRIntxn-pf-a}
\end{subfigure}\hspace{0.05\textwidth}%%%%%%%%%% 
\begin{subfigure}[b]{0.25\textwidth}
    \centering 
    \begin{tikzpicture}
    \node at (0,-1.5) {};
    \node at (0,1) {};
    
    \node[Snode] (x) at (-1,1) {};
    \node[Snode] (y) at (-1,-1) {};
    \node[Rnode] (r) at (0,0) {};
    \draw [SR, highlightblue] (x.south east)--([srNW]r.north west) ;
    \draw [SR, highlightblue] (y.north east)--([srSW]r.south west) ;
    
    \draw [SR] (y.south west) to [out=220, in=-60, looseness=1] (-2,-1);
     \draw [SR,->, dashed] (-2,-1) to [out=120, in=-80] (-2.25,-0.25);
    \draw [SR,->,dashed] (-2.25,0.25)  to [out=80, in=-120] (-2,1);
    \draw [SR] (-2,1) to [out=60, in=-220] (x.north west);
    
    \begin{scope}[transform canvas={xshift=0.06cm, yshift=0.055cm}]
    \draw [SR] (x.south east)--([srNW]r.north west) ;
    \end{scope}
    \begin{scope}[transform canvas={xshift=0.06cm, yshift=-0.055cm}]
    \draw [SR] (y.north east)--([srSW]r.south west) ;
    \end{scope}

    %%%%%%%%% JUST COPY %%%%%%%%%%%%
    \node[Snode] (x) at (-1,1) {};
    \node[Snode] (y) at (-1,-1) {};
    \node[Rnode] (r) at (0,0) {};
\end{tikzpicture}
    \caption{}
    \label{fig:SRIntxn-pf-b}
\end{subfigure}\hspace{0.05\textwidth}%%%%%%%%%% 
\begin{subfigure}[b]{0.25\textwidth}
    \centering 
    \begin{tikzpicture}
    \node at (0,-1.5) {};
    \node at (0,1) {};
    
    \node[Snode] (x) at (-1.25,0) {};
    \node[Snode] (y) at (1.25,0) {};
    
    \draw [SR] (x.east)--(-0.75,0);
    \draw [SR,->,dashed] (-0.75,0)--(-0.25,0);
    \draw [SR,->,dashed] (0.25,0)--(0.65,0);
    \draw [SR] (0.65,0)--(y.west);
    \begin{scope}[transform canvas={yshift=-0.07cm}]
    \draw [SR, highlightblue] (x.east)--(-0.75,0);
    \draw [SR,dashed, highlightblue] (-0.75,0)--(-0.25,0);
    \draw [SR,dashed, highlightblue] (0.25,0)--(0.65,0);
    \draw [SR, highlightblue] (0.65,0)--(y.west);
    \end{scope}
    
    \draw [SR] (y.north east) to [out=60, in=-30] (0.9,1.25);
    \draw [SR,->,dashed] (0.9,1.25) to [out=150, in=0] (0.2,1.4);
    \draw [SR,->,dashed] (-0.2,1.4) to [out=180, in=30] (-0.9,1.25);
    \draw [SR] (-0.9,1.25) to [out=210, in=130] (x.north west);
    
    \draw [SR, highlightblue] (y.south east) to [out=-60, in=30] (0.9,-1.25);
    \draw [SR,->,dashed, highlightblue] (0.9,-1.25) to [out=210, in=0] (0.2,-1.4);
    \draw [SR,->,dashed, highlightblue] (-0.2,-1.4) to [out=180, in=-30] (-0.9,-1.25);
    \draw [SR, highlightblue] (-0.9,-1.25) to [out=150, in=230] (x.south west);

    %%%%%%%%% JUST COPY %%%%%%%%%%%%
\end{tikzpicture}
    \caption{}
    \label{fig:SRIntxn-pf-c}
\end{subfigure}
    \caption{Preimages in $\tilde{\DSR}$ of two cycles with at least a common S-node and a common R-node. The three cases to consider in the proof of \Cref{prop:SR-hom-SRintxn} are when (a) both $\Phi(\tilde{C}_i)$ are c-pairs, (b) one of them is a c-pair, and (c) neither is a c-pair. In (a) and (b), the intersection $\tilde{C}_1 \cap\tilde{C}_2$ is the preimage of the c-pair. In (c), because $\Phi(\tilde{C}_1)$ and $\Phi(\tilde{C}_2)$ are cycles on which $\Phi$ acts injectively, $\Phi(\tilde{C}_1 \cap \tilde{C}_2) = \Phi(\tilde{C}_1)\cap \Phi(\tilde{C}_2)$.}
    \label{fig:SRIntxn-pf}
\end{figure}
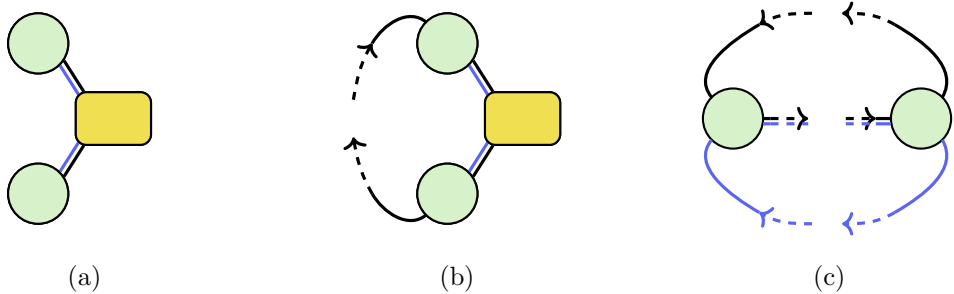

%%%%

    Conversely, first suppose now that no cycle contains a bispecies production edge, but that there are two cycles $C_1 \neq C_2$ in $\DSR$ whose intersection $C_1 \cap C_2$ is a S-to-R intersection. By \Cref{lem:SR-hom-cycle}, let $\tilde{C}_1$ and $\tilde{C}_2$ be the unique cycles mapped to $C_1$ and $C_2$ respectively. Since $\Phi$ acts injectively on these cycles, we have $\Phi(\tilde{C}_1 \cap \tilde{C}_2) = C_1 \cap C_2$, a S-to-R intersection; therefore $\tilde{C}_1 \cap \tilde{C}_2$ is also a S-to-R intersection. 
    
    Now suppose that a cycle $C$ in $\DSR$ contains a bispecies production edge $(\cf{X}_k, \cf{R})$ where $\cf{R}$ is the bispecies R-node, with reactant S-nodes $\cf{X}_i$ and $\cf{X}_j$. \Cref{fig:SRIntxn-counterex} hints at the proof. Let $\tilde{C}^*$ be the s-cycle that is the preimage of the c-pair adjacent to $\cf{R}$, i.e., $\tilde{C}^* = \left< \cf{X}_i, \tilde{\cf{R}}_i, \cf{X}_j, \tilde{\cf{R}}_j\right>$. We will construct a cycle in $\tilde{\DSR}$ whose intersection with $\tilde{C}^*$ is a S-to-R intersection. Let $C = \left< v_\ell = v_0, v_1, \ldots, v_{\ell-1} \right>$, where $v_1 = \cf{R}$, $v_0 = \cf{X}_i$, and $v_2 = \cf{X}_k$. Then $\Phi$ maps the segment $\left< \cf{X}_i, \tilde{\cf{R}}_i, \cf{X}_k \right>$ to the $\left<v_0, v_1,v_2 \right>$ part of the cycle $C$. Note that the intersection of $\left< \cf{X}_i, \tilde{\cf{R}}_i, \cf{X}_k \right>$ with $\tilde{C}^*$ is $(\cf{X}_k, \tilde{\cf{R}}_i)$, a S-to-R intersection. More precisely, whenever $v_k$ is a bispecies R-node, and $v_{k-1} = \cf{X}_i$, choose $\tilde{\cf{R}}_i$ from the preimage of $v_k$. The result is a cycle in $\tilde{\DSR}$, whose intersection with $\tilde{C}^*$ is precisely $(\cf{X}_k, \tilde{\cf{R}}_i)$, a S-to-R intersection. 
\end{proof}

\subsection{Proof of \Cref{thm:SRmodified}}

Recall that we supposed $\mc N$ satisfy conditions \textbf{(N2)}--\textbf{(N4)};  \Cref{thm:SRmodified} claims that in the \SR\ $\tilde{\DSR}$ of the network $\tilde{\mc N}$, 
    \begin{enumerate}[topsep=-3pt]
    \item[\textup{(i)}] 
        all cycles are s-cycles, and 
    \item[\textup{(ii)}] 
        there is no S-to-R intersection,
    \end{enumerate}
if and only if in the \SR\ $\DSR$ of the network $\mc N$,
    \begin{enumerate}[topsep=-3pt]
    \item[\textup{(a)}]
        no cycle contains a bispecies production edge; 
    \item[\textup{(b)}] 
        all cycles are s-cycles, and 
    \item[\textup{(c)}] 
        there is no S-to-R intersection. 
    \end{enumerate}
\Cref{prop:SR-hom-SRintxn} claims \textup{(ii)} is equivalent to \textup{(a)} and \textup{(c)}. Assuming \text{(a)}, \Cref{prop:SR-hom-s-cycle} proves the equivalence of \textup{(i)} and \textup{(b)}. Clearly, \textup{(a)}--\textup{(c)} implies \textup{(i)}--\textup{(ii)}. Towards the other direction, \textup{(ii)} implies \textup{(a)} and \textup{(c)}. Hence, we may take \textup{(a)} as an assumption, and conclude \textup{(b)} from \textup{(i)}. \qed

\subsection{Summary}
\label{sec:implications}

\begin{figure}[h!]
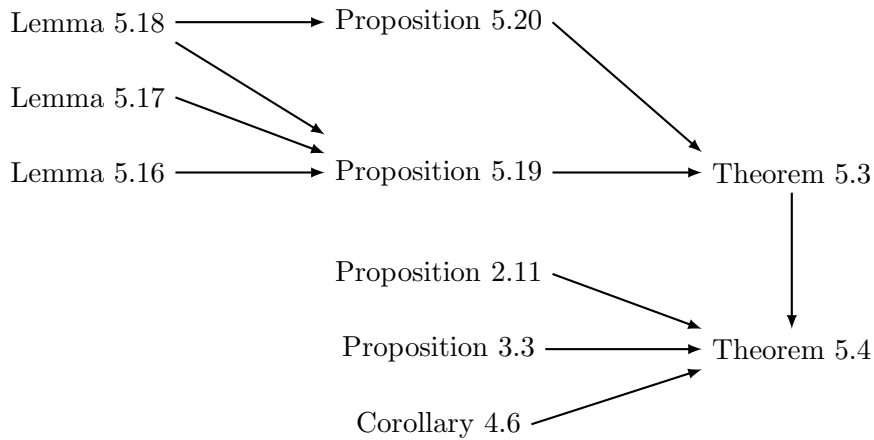

\centering
\tikzinline{1}{
\node at (0,2.5){}; 
\node (lem6) at (0,0) [left] {\Cref{lem:SR-hom-label}};

\node (lem7) at (0,1) [left] {\Cref{rmk:cpair-scycle}}; 

\node (lem8) at (0,2) [left] {\Cref{lem:SR-hom-cycle}};
\node (prop9) at (3.5,0) {\Cref{prop:SR-hom-s-cycle}};
    \draw [-latex, thick] (lem6.east)--(prop9.west);
    \draw [-latex, thick] (lem7.east)--([yshift=-0.05cm] prop9.north west);
    \draw [-latex, thick] (lem8.south east)--([yshift=0.2cm] prop9.north west);
    
\node  (prop10) at (3.5,2) {\Cref{prop:SR-hom-SRintxn}};
    \draw [-latex, thick] (lem8.east)--(prop10.west);
\node (thm2) at (7,0) [right] {\Cref{thm:SRmodified}};
    \draw [-latex, thick] (prop10.east)--(thm2.north west);
    \draw [-latex, thick] (prop9.east)--(thm2.west);
%%%%%%
\node (thm3) at (7,-2.35) [right] {\Cref{thm:delay-SR}};
    \draw [-latex, thick] (thm2.south)--(thm3.north);

\node (cordelay) at (3.5,-1.35)  {\Cref{cor:delay-alg}};
\node (propmodJac) at (3.5,-2.35)  {\Cref{prop:modifiedsystem}};
\node (corcf06) at (3.5,-3.35)  {\Cref{cor:SR-cf2006}};
    \draw [-latex, thick] (cordelay.east)--(thm3.north west);
    \draw [-latex, thick] (propmodJac.east)--(thm3.west);
    \draw [-latex, thick] (corcf06.east)--(thm3.south west);
}
\caption{Logical implications of results}
\label{fig:implications}
\end{figure}

Results have become more intertwined in the latter portion of this section. We now summarize the various results, and the connections between them are shown in \Cref{fig:implications}.  

First, recall that \Cref{cor:delay-alg} cites algebraic conditions on the Jacobian and modified Jacobian matrices of a reaction network $\mc N$ that are sufficient to conclude delay stability of delay mass-action systems~\cite{CraciunMinchevaPanteaYu2020}. In particular, to conclude asymptotic stability of any equilibrium, we need the Jacobian matrix to be non-singular, and the negative of the modified Jacobian matrix $-\tilde{\Jac}$ to be a $P_0$-matrix whose diagonal is strictly positive. In \Cref{prop:modifiedsystem}, we constructed the modified reaction network $\tilde{\mc N}$ such that its Jacobian matrix, when evaluated at appropriate rate constants and concentrations, is exactly the modified Jacobian matrix of $\mc N$. Meanwhile, \Cref{cor:SR-cf2006} references known conditions on \SR\ of a reaction network sufficient for the negative of its Jacobian matrix to be a $P_0$-matrix~\cite{CraciunFeinberg2006,CraciunBanaji2010}. Hence, we may conclude that $\tilde{\Jac}$ has the desired properties for delay stability, provided the \SR\ of the modified network $\tilde{\mc N}$ satisfies the following conditions:
\begin{itemize}
    \item all cycles are s-cycles, and 
    \item there is no S-to-R intersection.
\end{itemize}

Next, recognizing that the modified network $\tilde{\mc N}$ is an artifact born because of the modified Jacobian matrix $\tilde{\Jac}$, we attempt to relate the \SR\ of the reaction network $\mc N$ and that of the modified network $\tilde{\mc N}$. Because of how the modified network is defined, the \SR\ $\DSR$ of $\mc N$ looks extremely similar to the \SR\ $\tilde{\DSR}$ of $\tilde{\mc N}$. Notably, the differences between the two \SR s are due to any reactions with multiple reactant species. For technical reasons, we restricted ourselves with networks $\mc N$ with four properties \textbf{(N1)}--\textbf{(N4)}. For the purpose of this discussion, note that we allow \emph{at most two} reactant species for each reaction; for reactions with exactly two reactant species --- the so-called \emph{bispecies reactions} --- these must be irreversible. The differences between the \SR s of $\mc N$ and $\tilde{\mc N}$ occur near any bispecies reaction R-nodes. 

A feature that we would like to avoid in the \SR\ of $\mc N$ is cycle with a bispecies production edge (see \Cref{def:bispeciesedge} or \Cref{fig:SRIntxn-bisp-prodt}). In \Cref{prop:SR-hom-SRintxn}, we prove that the \SR\ $\DSR$ of the network $\mc N$ has a S-to-R intersection if and only if the \SR\ $\tilde{\DSR}$ of the modified network $\tilde{\mc N}$ either has a S-to-R intersection or has a cycle containing a bispecies production edge. In \Cref{lem:SR-hom-cycle}, we showed that there is a one-to-one correspondence between the set of cycles in $\tilde{\DSR}$ and the set of cycles and c-pairs in $\DSR$.
\Cref{rmk:cpair-scycle} and  \Cref{lem:SR-hom-label} relate the stoichiometric coefficients of $\DSR$ and $\tilde{\DSR}$, essentially allowing us to conclude that all cycles in $\DSR$ are s-cycles if every cycle in $\tilde{\DSR}$ is a s-cycle in \Cref{prop:SR-hom-s-cycle}. In short, we conclude in \Cref{thm:delay-SR} that $\mc N$ is delay stable, i.e., any equilibrium is asymptotically stable independent of rate constants and delay parameters, if $\mc N$ satisfies conditions \textbf{(N1)}--\textbf{(N4)} and the \SR\ $\DSR$ of $\mc N$ satisfies the following conditions: 
\begin{itemize}
    \item no cycle contains a bispecies production edge; 
    \item all cycles are s-cycles, and 
    \item there is no S-to-R intersection.
\end{itemize}

%%%%%%%%%%%%%%%%%%%%%%%%%%%%%%%%%%%%%%%%%%%%%%%%%%%%%%%%%%%%%%%%%%%%%%%%%%%%%%%%%%%%%%%%%%%%%%%%%%%%%%%%%%%%%%%%%%%%%%%%
\section*{Acknowledgements} 

G.C. was supported in part by the National Science Foundation [DMS--1412643, DMS--1816238], C.P. was supported in part by [NSF DMS--1517577], and P.Y. was supported in part by Natural Sciences and Engineering Research Council of Canada [PGS-D].

%%%%%%%%%%%%%%%%%%%%%%%%%%%%%%%%%%%%%%%%%%%%%%%%%%%%%%%%%%%%%%%%%%%%%%%%%%%%%%%%%%%%%%%%%%%%%%%%%%
\bibliographystyle{siam}
\bibliography{cit}

\begin{thebibliography}{10}

\bibitem{Angeli.2013aa}
{\sc D.~Angeli, M.~Banaji, and C.~Pantea}, {\em Combinatorial approaches to
  {H}opf bifurcations in systems of interacting elements}, Communications in
  Mathematical Sciences, 12 (2014), pp.~1101 -- 1133.

\bibitem{Angeli.2010aa}
{\sc D.~Angeli, P.~De~Leenheer, and E.~Sontag}, {\em Graph-theoretic
  characterizations of monotonicity of chemical networks in reaction
  coordinates}, J Math Biol, 61 (2010), pp.~581--616.

\bibitem{Banaji2012}
{\sc M.~Banaji}, {\em Cycle Structure in {SR} and {DSR} Graphs: {I}mplications
  for Multiple Equilibria and Stable Oscillation in Chemical Reaction
  Networks}, vol.~6900 of Lecture Notes in Computer Science, Springer, 2012,
  pp.~1--22.

\bibitem{CraciunBanaji2009}
{\sc M.~Banaji and G.~Craciun}, {\em Graph-theoretic approaches to injectivity
  and multiple equilibria in systems of interacting elements}, Communications
  in Mathematical Sciences, 7 (2009), pp.~867--900.

\bibitem{CraciunBanaji2010}
\leavevmode\vrule height 2pt depth -1.6pt width 23pt, {\em Graph-theoretic
  criteria for injectivity and unique equilibria in general chemical reaction
  systems}, Advances in Applied Mathematics, 44 (2010), pp.~168--184.

\bibitem{BanajiRutherford2011}
{\sc M.~Banaji and C.~Rutherford}, {\em {$P$}-matrices and signed digraphs},
  Discrete Mathematics, 311 (2011), pp.~295--301.

\bibitem{BanksRobbinsSutton2013}
{\sc H.~T. Banks, D.~Robbins, and K.~L. Sutton}, {\em Theoretical foundations
  for traditional and generalized sensitivity functions for nonlinear delay
  differential equations}, Mathematical Biosciences and Engineering, 10 (2013),
  pp.~1301--1333.

\bibitem{Bodner2000}
{\sc M.~Bodnar}, {\em The nonnegativity of solutions of delay differential
  equations}, Applied Mathematics Letters, 13 (2000), pp.~91--95.

\bibitem{Boros2019}
{\sc B.~Boros}, {\em Existence of positive steady states for weakly reversible
  mass-action systems}, SIAM Journal on Mathematical Analysis, 51 (2019),
  pp.~435--449.

\bibitem{NuclProp-DNA_book}
{\sc C.~R. Cantor and P.~R. Schimmel}, {\em Biophysical Chemistry: Part {III}.
  {T}he Behavior of Biological Macromolecules}, W.H. Freeman and Company, 1980.

\bibitem{NuclProp-polymer}
{\sc C.-S. Chern}, {\em Emulsion polymerization mechanisms and kinetics},
  Progress in Polymer Science, 31 (2006), pp.~443--486.

\bibitem{ConradiFeliuMinchevaWiuf2017}
{\sc C.~Conradi, E.~Feliu, M.~Mincheva, and C.~Wiuf}, {\em Identifying
  parameter regions for multistationarity}, PLOS Computational Biology, 13
  (2017), pp.~1--25.

\bibitem{CraciunFeinberg2005}
{\sc G.~Craciun and M.~Feinberg}, {\em Multiple equilibria in complex chemical
  reaction networks: {I}. {T}he injectivity property}, SIAM Journal on Applied
  Mathematics, 65 (2005), pp.~1526--1546.

\bibitem{CraciunFeinberg2006}
\leavevmode\vrule height 2pt depth -1.6pt width 23pt, {\em Multiple equilibria
  in complex chemical reaction networks: {II}. {T}he species-reaction graph},
  SIAM Journal on Applied Mathematics, 66 (2006), pp.~1321--1338.

\bibitem{CST}
{\sc G.~Craciun, B.~Joshi, C.~Pantea, and I.~Tan}, {\em Multistationarity of
  cyclic sequestration-transmutation networks}.
\newblock submitted.

\bibitem{CraciunMinchevaPanteaYu2020}
{\sc G.~Craciun, M.~Mincheva, C.~Pantea, and P.~Y. Yu}, {\em Delay stability of
  reaction systems}, Mathematical Biosciences, 326 (2020), p.~108387.

\bibitem{CraciunNazarovPantea2013_GAC}
{\sc G.~Craciun, F.~Nazarov, and C.~Pantea}, {\em Persistence and permanence of
  mass-action and power-law dynamical systems}, SIAM Journal on Applied
  Mathematics, 73 (2013), pp.~305--329.

\bibitem{craciun2011graph}
{\sc G.~Craciun, C.~Pantea, and E.~D. Sontag}, {\em Graph-theoretic analysis of
  multistability and monotonicity for biochemical reaction networks}, in Design
  and analysis of biomolecular circuits, Springer, 2011, pp.~63--72.

\bibitem{Epstein1990}
{\sc I.~R. Epstein}, {\em Differential delay equations in chemical kinetics:
  {S}ome simple linear model systems}, Journal of Chemical Physics, 92 (1990),
  pp.~1702--1712.

\bibitem{Feinberg_lecture}
{\sc M.~Feinberg}, {\em {Lectures On Chemical Reaction Networks}},  (1979).
\newblock Available at \url{https://crnt.osu.edu/LecturesOnReactionNetworks}.

\bibitem{Feinberg1987}
\leavevmode\vrule height 2pt depth -1.6pt width 23pt, {\em Chemical reaction
  network structure and the stability of complex isothermal reactors--{I}. the
  deficiency zero and deficiency one theorems}, Chemical Engineering Science,
  42 (1987), pp.~2229--2268.

\bibitem{FiedlerPtak1962}
{\sc M.~Fiedler and V.~Pt\'ak}, {\em On matrices with non-positive off-diagonal
  elements and positive principal minors}, Czechoslovak Mathematical Journal,
  12 (1962), pp.~382--400.

\bibitem{GopalkrishnanMillerShiu2014_GAC}
{\sc M.~Gopalkrishnan, E.~Miller, and A.~Shiu}, {\em A geometric approach to
  the global attractor conjecture}, SIAM Journal on Applied Dynamical Systems,
  13 (2014), pp.~758--797.

\bibitem{GraphHom}
{\sc P.~Hell and J.~Ne\u{s}et\u{r}il}, {\em Graphs and Homomorphisms}, vol.~28
  of Oxford Lecture Series in Mathematics and its Applications, Oxford
  University Press, 2004.

\bibitem{HofbauerSo2000}
{\sc J.~Hofbauer and J.~W.-H. So}, {\em Diagonal dominance and harmless
  off-diagonal delays}, Proceedings of the American Mathematical Society, 128
  (2000), pp.~2675--2682.

\bibitem{Johnson1974}
{\sc C.~R. Johnson}, {\em Second, third, and fourth order {$D$}-stability},
  Journal of Research of the National Bureau of Standards, Section B:
  Mathematical Sciences, 78B (1974), pp.~11--13.

\bibitem{LiptakHangosPitukSzederkenyi2018}
{\sc G.~Lipt\'ak, K.~M. Hangos, M.~Pituk, and G.~Szederk\'enyi}, {\em
  Semistability of complex balanced kinetic systems with arbitrary time
  delays}, Systems \& Control Letters, 114 (2018), pp.~38--43.

\bibitem{Macdonald1989}
{\sc N.~MacDonald}, {\em Biological delay systems: Linear stability theory},
  Cambridge University Press, 1989.

\bibitem{NuclProp-DNA}
{\sc M.~Manghi and N.~Destainville}, {\em Physics of base-pairing dynamics in
  {DNA}}, Physics Reports, 631 (2016), pp.~1--41.

\bibitem{MinchevaRoussel2007}
{\sc M.~Mincheva and M.~R. Roussel}, {\em Graph-theoretic methods for the
  analysis of chemical and biochemical networks. {II}. {O}scillations in
  networks with delays}, Journal of Mathematical Biology, 55 (2007),
  pp.~61--86.

\bibitem{NuclProp-DNA3}
{\sc S.~Mohan, C.~Hsiao, H.~VanDeusen, R.~Gallagher, E.~Krohn, B.~Kalahar,
  R.~M. Wartell, and L.~D. Williams}, {\em Mechanism of {RNA} double
  helix-propagation at atomic resolution}, The Journal of Physical Chemistry B,
  113 (2009), pp.~2614--2623.

\bibitem{NuclProp-micelle}
{\sc A.~L. Moore}, {\em 4 -- {P}roduction of Fluoroelastomers}, William Andrew
  Publishing, 2006, pp.~37--76.

\bibitem{NuclProp-DNA2}
{\sc B.~Rauzan, E.~McMichael, R.~Cave, L.~R. Sevcik, K.~Ostrosky, E.~Whitman,
  R.~Stegemann, A.~L. Sinclair, M.~J. Serra, and A.~A. Deckert}, {\em Kinetics
  and thermodynamics of dna, rna, and hybrid duplex formation}, Biochemistry,
  52 (2013), pp.~765--772.

\bibitem{RousselRoussel2001}
{\sc C.~J. Roussel and M.~R. Roussel}, {\em Delay-differential equations and
  the model equivalence problem in chemical kinetics}, Physics in Canada, 57
  (2001), pp.~114--120.

\bibitem{Roussel1996}
{\sc M.~R. Roussel}, {\em The use of delay differential equations in chemical
  kinetics}, Journal of Physical Chemistry, 100 (1996), pp.~8323--8330.

\bibitem{ShinarFeinberg2010}
{\sc G.~Shinar and M.~Feinberg}, {\em Structural sources of robustness in
  biochemical reaction networks}, Science, 327 (2010), pp.~1389--1391.

\bibitem{CraciunYu2018_review}
{\sc P.~Y. Yu and G.~Craciun}, {\em Mathematical analysis of chemical reaction
  systems}, Israel Journal of Chemistry, 58 (2018), pp.~733--741.

\end{thebibliography}

\end{document}